\numberwithin{equation}{section}
\newtheorem{theorem}{Theorem}[section]
\newtheorem{defi}[theorem]{Definition}
\newtheorem{remark}[theorem]{Remark}
\newtheorem{corollary}[theorem]{Corollary}
\newtheorem{prop}[theorem]{Proposition}
\newtheorem{lemma}[theorem]{Lemma}
\newenvironment{pff}{\hspace*{-\parindent}{\bf Proof \,}}
{\hfill $\Box$ \vspace*{0.2cm}}
\def\R2n{{\mathbb R}^{2n}}
\def\R2{{\mathbb R}^2}
\def\R2n{{\mathbb R}^{2n}}
\def\N0{{\mathbb N}_{0}}
\def\l2h{{\ell^2(\hbar\mathbb Z^n)}}
\begin{document}
	\title[ Weighted periodic and discrete  Pseudo-Differential Operators]{ Weighted periodic and discrete   Pseudo-Differential Operators}
	\author[Aparajita Dasgupta]{Aparajita Dasgupta}
	\address{
		Aparajita Dasgupta:
		\endgraf
		Department of Mathematics
		\endgraf
		Indian Institute of Technology, Delhi, Hauz Khas
		\endgraf
		New Delhi-110016 
		\endgraf
		India
		\endgraf
		{\it E-mail address} {\rm adasgupta@maths.iitd.ac.in}
	}
	\author[Lalit Mohan]{Lalit Mohan}
	\address{
		Lalit Mohan:
		\endgraf
		Department of Mathematics
		\endgraf
		Indian Institute of Technology, Delhi
		\endgraf
		India
		\endgraf
		{\it E-mail address} {\rm mohanlalit871@gmail.com}
	}
	\author[Shyam Swarup Mondal]{Shyam Swarup Mondal}
	\address{
		Shyam Swarup Mondal:
		\endgraf
		Department of Mathematics
		\endgraf
		Indian Institute of Technology, Delhi
		\endgraf
		India
		\endgraf
		{\it E-mail address} {\rm mondalshyam055@gmail.com}
	}
	\date{\today}
	\subjclass{Primary 35S05, 47G30; Secondary 43A85, 35A17}
	\keywords{ $M$-ellipticity, Symbolic Calculus, Compact Operators, Gohberg's Lemma, G\r{a}rding's Inequality, Strong Solution}

	\begin{abstract}


		In this paper, we study elements of symbolic calculus for pseudo-differential operators associated with the weighted symbol class  $M_{\rho, \Lambda}^m(\mathbb{ T}\times \mathbb{Z})$   (associated to a suitable weight function  $\Lambda$ on $\mathbb{Z}$)  by deriving formulae for the asymptotic sums, composition, adjoint, transpose. We also construct the parametrix of $M$-elliptic  pseudo-differential operators on $\mathbb{ T}$.
		Further, we prove a version of Gohberg's lemma  for pseudo-differetial operators with  weighted symbol class  $M_{\rho, \Lambda}^0(\mathbb{ T}\times \mathbb{Z})$ and  as an application, we provide a sufficient and necessary condition to  ensure that the corresponding pseudo-differential operator is compact on   $L^2(\mathbb{T})$. 
		Finally, we provide     G\r{a}rding's and Sharp  G\r{a}rding's  inequality for $M$-elliptic operators on $\mathbb{Z}$ and  $\mathbb{T}$, respectively,  and present an application  in the context of strong solution of the  pseudo-differential equation $T_{\sigma} u=f$   in $L^{2}\left(\mathbb{T}\right)$.
		
	\end{abstract}
	\maketitle
	\tableofcontents
	
	\section{Introduction}
	The theory of pseudo-differential operators plays an important role   in modern mathematics due to the fact that it has drawn a significant motivation from partial differential equations,  signal processing, and time-frequency analysis, see \cite{Hor, MR&VT book, fis1}. Pseudo-differential operators acting on functions defined on smooth manifolds are an essential generalization of differential operators. The study of pseudo-differential operators originated  in 1960s with the works of  Kohn and Nirenberg \cite{Niren} and H\"ormander  \cite{Hor} 	in the study of singular integral differential operators, mainly for inverting
	differential operators to solve elliptic differential equations. 
	Ever since the theory is a key tool,  mostly for its connections with mathematical physics and   in many areas of harmonic analysis,    quantum field theory,   and the index theory.

	In general, using the Mikjlin-H\"ormander theorem for Fourier multipliers,  pseudo-differential operators associated to    the class $S_{1, 0}^{0}$  are $L^{p}$-bounded.   It is also well known that the $L^2$-boundedness property holds for  pseudo-differential operators associated with the symbols, $S_{\rho, \delta}^{0}$  with $0 \leq \delta < \rho \leq  1$    (see \cite{Beals,Hor}). However, the situation becomes completely different for the case  $p\neq2$.
	Fefferman   \cite{feff} proved that  the pseudo-differential operators whose symbols belong to the class $S_{\rho, 0}^{0}$  with $0<\rho<1$ are not in general  $L^{p}$-bounded    for  $p \neq 2$. To avoid the difficulty described above, Taylor in \cite{Ta} introduced a suitable symbol subclass $M_{\rho,0}^{m}$, of $S_{\rho, 0}^{0}$ and developed symbolic calculus for the associated pseudo-differential operators. Further, Garello and Morando \cite{GM,mor05} introduced subclass  $M_{\rho, \Lambda}^{m}$  of $S_{\rho, \Lambda}^{m}$, which are just a weighted version of the symbol class introduced by Taylor and developed the symbolic calculus for the associated pseudo-differential operators  with many applications to  study the  regularity of multi-quasi-elliptic operators.

	Considerable attention has been devoted in the past sixteen  years to study   various properties of pseudo-differential operators associated with the symbol class $M_{\rho, \Lambda}^{m}$ on $\mathbb{R}^n$ in various directions by several researchers.   For instance,  the  symbolic calculus,  parametrix, and $L^p,1 < p < \infty,$  boundedness of      pseudo-differential  operators with  symbol    in    $M_{\rho, \Lambda}^{m}, m>0$ has  been studied by Wong    \cite{wong06}.   Further,       Kalleji   \cite{kal}  constructed  a weighted symbol class $M_{\rho, \Lambda}^m(\mathbb{ T}^n\times \mathbb{Z}^n), m\in \mathbb{R}$ associated to a suitable weight function  $\Lambda$ on $\mathbb{ Z }^n$  and study   minimal and maximal extensions,  among some other results for   pseudo-differential operators  associated  with symbol in $M_{\rho, \Lambda}^m(\mathbb{ Z}^n\times \mathbb{T}^n).$  We also note that recently, the authors in  \cite{SSM&VK}  constructed and studied $M$-elliptic pseudo-differential operators on $\mathbb{ Z }^n$ with symbol in  $M_{\rho, \Lambda}^m(\mathbb{ Z}^n\times \mathbb{T}^n)$   which is  just a weighted version of the    H\"ormander symbol class $ S_{\rho}^{m}\left(\mathbb{Z}^{n} \times \mathbb{T}^{n}\right)$ on $\mathbb{Z}^n$, introduced by Botchway, Kabiti and Ruzhansky \cite{AB&GK&MK}. More details about discrete pseudo-differential operators can be found in \cite{Cardonaaa,Kumar202020,CardonaKumar1,CardonaKumar}. This paper investigates $M$-elliptic pseudo-differential operators on $\mathbb{ T}$. In particular, we investigate elements of symbolic calculus for pseudo-differential operators associated with the  class  $M_{\rho, \Lambda}^m(\mathbb{ T}\times \mathbb{Z})$   by deriving formulae for the asymptotic sums of symbols, composition, adjoint, transpose. We also construct the parametrix of $M$-elliptic  pseudo-differential operators on $\mathbb{ T}$. e can extend this result for $\mathbb{T}^n$.

	Another important result in analysis is the Gohberg lemma due to its application in spectral theory and the singular integral equations. Gohberg lemma was first obtained by Gohberg \cite{Goh} to investigate integral operators. In 1970, the Gohberg lemma for pseudo-differential operators with bounded symbols was obtained by Gru\v{s}hin \cite{VVG}. Later, for symbols in the H\"ormander class  $S^0_{1, 0}(\mathbb{ S}^1\times \mathbb{ Z })$, an analogue of Gohberg’s lemma has been proved  in \cite{SM&MW}  to  prove the spectral invariance. Further, this theory has been extended to compact Lie group by  the first author and  Ruzhansky in \cite{dasgupta}. Recently,  for symbols in the H\"ormander class $S^0_{ 1,0}(\Omega \times \mathcal{I})$, Ruzhansky and Publo \cite{publo}   investigated a “non-harmonic version” of Gohberg’s lemma, and provided a sufficient and necessary condition to ensure that the corresponding pseudo-differential operator is a compact operator in $L^2(\Omega)$. In this manuscript, we also    establish a version of  the Gohberg lemma but for  the weighted symbol  class   $M_{\rho, \Lambda}^0(\mathbb{ T}\times \mathbb{Z})$. As an application of  Gohberg’s lemma, we also provide a sufficient and necessary condition to  ensure that the corresponding pseudo-differential operator is compact on   $L^2(\mathbb{T})$. Particularly,   we   evaluate    the norm of $T_\sigma-K$, where $K$ is a compact operator  and $\sigma\in M_{\rho, \Lambda}^0(\mathbb{ T}\times \mathbb{Z})$, and    give  estimates  for the essential spectrum of such  operator $T_\sigma$. Using the relation between the lattice quantization and the toroidal quantization developed in  \cite{AB&GK&MK}, we prove     Gohberg's lemma for  the weighted symbol  class   $M_{\rho, \Lambda}^0(\mathbb{ Z}\times \mathbb{T})$ on $\mathbb{Z}.$

	G\r{a}rding's  type inequality plays a crucial role in the study of several problems related to initial value problem of parabolic type.   Investigation of   G\r{a}rding's inequality for strongly elliptic operators was first proved by G\r{a}rding  \cite{LG}    to derive the existence of solutions of the Dirichlet problem for elliptic operators as well as to study the distribution of the eigenvalues. After that, considerable attention has been devoted by several researchers to studying G\r{a}rding's inequality for pseudo-differential operators associated with the H\"{o}rmander symbols with applications to PDE  in different contexts. For example,   G\r{a}rding's inequality for pseudo-differential operators associated with the H\"{o}rmander symbols on $\mathbb{R}^n$ with $0 \leq \delta \leq \rho \leq 1$, on compact Lie group with matrix-valued symbols, and in context of non-harmonic analysis on general smooth manifolds can be found in \cite{Ta,MR&JW,DC&MR,garding1}.    Here we would like to note that, recently, the first and second authors proved the G\r{a}rding's inequality for  SG $M$-elliptic operators to obtain results about the existence and uniqueness of solutions of the parabolic type IVP. On the other hand,  in order to deal with non-elliptic problems,  H\"ormander proved in \cite{Horman}   sharp G\r{a}rding's  inequality for operators with symbols having nonnegative real part. The sharp G\r{a}rding's inequality and its generalizations become an essential tool to investigate  the existence of solutions to a wide class of boundary value problems and to analyze the global solvability and the local well-posedness of the Cauchy problem for evolution equations. 	The sharp G\r{a}rding's inequality on $\mathbb{R}^n$ is one of the most important tools of the microlocal analysis with numerous applications in the theory of PDE \cite{Horman}.  	Notably, the sharp G\r{a}rding's inequality requires the condition   imposed on the full symbol.   Further,    the sharp G\r{a}rding's inequality for the Kohn-Nirenberg classes $S^m_{1,0}(G)$  proved in  \cite{Michael&Turunen}. Recently, the authors in \cite{sharp garding}   extended these inequality for the H\"ormander classes $S^m_{\rho,\delta}(G)$ for all $0  \leq \delta<\rho\leq  1$.   In this paper, we  prove the G\r{a}rding's  and sharp G\r{a}rding's   inequality for $M$-elliptic pseudo-differential operator with symbol in $M_{\rho, \Lambda}^0(\mathbb{ T}\times \mathbb{Z})$ on $\mathbb{T}$ and $M_{\rho, \Lambda}^0(\mathbb{ Z}\times \mathbb{T})$ on $\mathbb{ Z }$, respectively.    We also  present an application  of G\r{a}rding's   inequality  in the context of strong solution of the  pseudo-differential equation $T_{\sigma} u=f$   in $L^{2}\left(\mathbb{T}\right)$.

	The presentation of this manuscript is divided into six sections, including the introduction as follows:
	\begin{itemize}
		\item In Section \ref{sec2} we first recall some of the basics of Fourier analysis and important properties of periodic pseudo-differential operators on $\mathbb{ T}$. We also recall the weighted symbol class $M_{\rho, \Lambda}^{m}(\mathbb{T} \times \mathbb{Z} ), m\in \mathbb{R}$  associated to a suitable weight function  $\Lambda$ on $\mathbb{ Z }$ from \cite{kal}.

		\item In Section \ref{sec3} we study elements of   symbolic calculus for pseudo-differential operators associated with symbol in the weighted class  $M_{\rho, \Lambda}^m(\mathbb{ Z}\times \mathbb{T})$  by deriving formulae for the asymptotic sums, composition, adjoint, transpose. By recalling the definition of   $M$-ellipticity for symbols we construct the parametrix of $M$-elliptic  pseudo-differential operators.
		
		\item  In Section \ref{sec4} we study compact   $M$-elliptic pseudo-differential operators on $\mathbb{ T}$.  We prove a version of Gohberg's lemma  for pseudo-differential operators on $\mathbb{ T}$ and $\mathbb{ Z}$ with  symbol in the weighted symbol class  $M_{\rho, \Lambda}^0(\mathbb{ T}\times \mathbb{Z})$ and $M_{\rho, \Lambda}^0(\mathbb{ Z}\times \mathbb{T})$, respectively. Further, we provide a sufficient and necessary condition to  ensure  the compactness  of  a  pseudo-differential operator  on   $L^2(\mathbb{T})$ (also on $\ell^2(\mathbb{Z})$) with symbol in $M_{\rho, \Lambda}^0(\mathbb{ T}\times \mathbb{Z})$ (respectively in $M_{\rho, \Lambda}^0(\mathbb{ Z}\times \mathbb{T})$).
		
		\item 	 In Section \ref{sec5}    we prove   G\r{a}rding's and Sharp  G\r{a}rding's  inequality for $M$-elliptic operators on $\mathbb{Z}$ and  $\mathbb{T}$, respectively.    
		
		\item  In Section \ref{sec6}   we   discuss   an application  of G\r{a}rding's inequality  in the context of strong solution of the  pseudo-differential equation $T_{\sigma} u=f$   in $L^{2}\left(\mathbb{T}\right)$.
	\end{itemize} 

	Note that, these results can be extended easily from $\mathbb{T}$ to the $n$-dimensional torus $\mathbb{T}^{n}$ given by
	$
	\mathbb{T}^{n}=\underbrace{\mathbb{T}\times \cdots \times \mathbb{T}}_{n \text { times }},$ and similarly from $\mathbb{Z}$ to $\mathbb{Z}^{n}.$
	\section{Preliminaries}\label{sec2}
	In this section,  we first recall some notation and basic properties of periodic Fourier analysis and pseudo-differential operators on $\mathbb{T}$. We also recall   the toroidal symbol class $S_{\rho, \Lambda}^{m}\left(\mathbb{T} \times \mathbb{Z}\right)$  in the view  of Ruzhansky-Turunen theory  \cite{MR&VT book} as well as the weighted  symbol class $M_{\rho, \Lambda}^m(\mathbb{ T}\times \mathbb{Z})$ from \cite{kal}.  	We refer \cite{ MR&VT book,  MP,  SM&MW, kal, Vel} for more details and the study of various operator theoretical   properties of pseudo-differential operators  on $\mathbb{ T}.$

	The  Fourier transform $\hat{f}$ of a function $f \in L^{1}\left(\mathbb{T}\right)$ is defined by
	$$
	\widehat{f}(k)=\int_{x \in \mathbb{T}} e^{2 \pi i k \cdot x} f(x) \;dx, \quad k\in \mathbb{Z},
	$$
	where $dx$  is  the normalized Haar measure on $ \mathbb{T}$.  The above periodic  Fourier transform can be extended to $L^{2}\left(\mathbb{T}\right)$ using
	the standard density arguments. We normalize the Haar measures on $\mathbb{T}$ in such a
	manner so that the following Plancherel formula  holds:
	$$
	\int_{\mathbb{T}}|\widehat{f}(x)|^{2} d x=	\sum_{k \in \mathbb{Z}}|f(k)|^{2}.
	$$
	The inverse of the periodic Fourier transform is given by
	$$
	f(x)=\sum_{k\in \mathbb{Z}} e^{-2 \pi i k \cdot x} \widehat{f}(k), \quad x \in \mathbb{T},
	$$ where $f$ belongs to a suitable function space, namely, the Schwartz space of $\mathbb{ Z}$, $\mathcal{S}(\mathbb{ Z}),$ the space of rapidly decaying functions from  $\mathbb{ Z}\to \mathbb{C}$. 
	
	\begin{defi}(\textbf{Forward and backward differences $\triangle_{k}$ and $\bar{\triangle}_{k}$} )\\
		Let $\sigma: \mathbb{Z} \rightarrow \mathbb{C}$. We define the forward and backward partial difference operators $\triangle_{k}$ and $\bar{\triangle}_{k}$, respectively, by
		$$\triangle_{k} \, \sigma(k):=\sigma\left(k+1\right)-\sigma(k),$$
		$$\bar{\triangle}_{k} \, \sigma(k):=\sigma(k)-\sigma\left(k-1\right).$$
	\end{defi} 
	
	Let us now recall the H\"ormander symbol class, $ S_{\rho}^{m}\left(\mathbb{T}\times \mathbb{Z}\right)$, on $\mathbb{T}$, which is same as defined in \cite{MR&VT book}.  
	\begin{defi} \label{eq10} Let $m \in \mathbb{R}$ and $\rho>0.$ We say that a function $\sigma: \mathbb{T} \times \mathbb{Z} \rightarrow \mathbb{C}$ belongs to $ S_{\rho}^{m}\left(\mathbb{T} \times \mathbb{Z}\right)$  if $\sigma(x, k) $ is smooth in $x$ for all $k \in \mathbb{Z},$ and for
		all  $\alpha, \beta \in  \mathbb{N}_0$, there exists a positive constant $C_{\alpha, \beta}$ such that  
		$$
		\left|\Delta_{k}^{\alpha} \partial_{x}^{\beta} \sigma(x, k)\right| \leq C_{\alpha, \beta}(1+|k|)^{m-\rho|\alpha|}, \quad (x, k)\in  \mathbb{T} \times \mathbb{Z}.
		$$
	\end{defi}
	For the symbol $\sigma\in S_{\rho}^{m}\left(\mathbb{T}\times \mathbb{Z}\right)$, 	the corresponding pseudo-differential  operator associated with $\sigma$ is given by
	$$
	(T_\sigma f) (x):=\sum_{k\in \mathbb{Z}} e^{-2 \pi i k \cdot x}  \sigma( x, k) \widehat{f}(k) , \quad x\in \mathbb{T}.
	$$
	We denote  $\mathrm{OP} S_\rho^{m}\left(\mathbb{T} \times \mathbb{Z}\right)$ be the set of all operators  corresponding to the symbol class $S_\rho^{m}\left(\mathbb{T}\times \mathbb{Z}\right)$.

	\begin{defi}\label{weight fun. defi}
		Let $\Lambda$ be a positive function. We say that $\Lambda$ is a weight function if there exist suitable positive constants, $\mu_{0} \leq \mu_{1}$ and $C_{0}, C_{1}$ such that
		$$
		C_{0}(1+|k|)^{\mu_{0}} \leq \Lambda(k) \leq C_{1}(1+|k|)^{\mu_{1}}, \quad k \in \mathbb{Z}.
		$$
	\end{defi}
	Furthermore, we assume that there exists a real constant $\mu$ such that $\mu \geq \mu_1$ and for all $\alpha$,$\gamma$ $\in \mathbb{N}_{0}$ with $\gamma \in \{0,1\}$, we can find a positive constant $C_{\alpha,\gamma}$ such that
	\begin{eqnarray}\label{weight estimate}
		\left|k^{\gamma} \Delta_{k}^{\alpha+\gamma} \Lambda(k)\right| \leq C_{\alpha, \gamma} \Lambda(k)^{1-\frac{1}{\mu} \alpha}, \quad k \in \mathbb{Z}.
	\end{eqnarray}
	\begin{defi}\label{S symbol defi}
		Let $m \in \mathbb{R}$ and $\rho \in\left(0, \frac{1}{\mu}\right]$. Then the toroidal symbol class $S_{\rho, \Lambda}^{m}\left(\mathbb{T} \times \mathbb{Z}\right)$ is the set of all functions $\sigma: \mathbb{T} \times \mathbb{Z} \rightarrow \mathbb{C}$ which are smooth in $x$ for all $k \in \mathbb{Z}$, and for all $\alpha, \beta \in \mathbb{N}_{0}$, there exists a positive constant $C_{\alpha, \beta}$ such that
		\begin{eqnarray}\label{S class estimate}
			\left|\Delta_{k}^{\alpha} \partial_{x}^{\beta} \sigma(x, k)\right| \leq C_{\alpha, \beta} \Lambda(k)^{m-\rho \alpha}, \quad(x, k) \in \mathbb{T} \times \mathbb{Z}.
		\end{eqnarray}
	\end{defi}
	As usual we set
	$$
	S_{\rho, \Lambda}^{\infty}\left(\mathbb{T} \times \mathbb{Z}\right):=\bigcup_{m \in \mathbb{R}} S_{\rho, \Lambda}^{m}\left(\mathbb{T} \times \mathbb{Z}\right)
	$$
	and
	$$
	S_{\rho, \Lambda}^{-\infty}\left(\mathbb{T} \times \mathbb{Z}\right):=\bigcap_{m \in \mathbb{R}} S_{\rho, \Lambda}^{m}\left(\mathbb{T} \times \mathbb{Z}\right) \text {. }
	$$
	Let $\sigma \in S_{\rho,\Lambda}^{m}\left(\mathbb{T} \times \mathbb{Z}\right)$. Define a pseudo-differential operator $T_{\sigma}$ associated with symbol $\sigma$ by
	\begin{eqnarray}\label{pseudo-defi}
		T_{\sigma} f(x)= (2 \pi)^{-1} \sum_{k \in \mathbb{Z}} \int_{\mathbb{T}} e^{ i (x-y) \cdot k} \sigma(x, k) f(y) d y, \quad x \in \mathbb{T}
	\end{eqnarray}
	for every $f \in C^{\infty}\left(\mathbb{T}\right)$. We write $\operatorname{Op} (S_{\rho, \Lambda}^{m}(\mathbb{T} \times \mathbb{Z}))$ for the class of pseudo-differential operators associated with the symbol class $S_{\rho, \Lambda}^{m}\left(\mathbb{T} \times \mathbb{Z}\right)$.
	
	Now, we will describe the main ingredient of this paper, namely the symbol class $M_{\rho, \Lambda}^{m}\left(\mathbb{T} \times \mathbb{Z}\right)$.
	
	\begin{defi}\label{M class defi}
		For $m \in \mathbb{R}$ and $\rho \in\left(0, \frac{1}{\mu}\right]$, the symbol class, $M_{\rho, \Lambda}^{m}\left(\mathbb{T} \times \mathbb{Z}\right)$ consists of all functions $\sigma: \mathbb{T} \times \mathbb{Z} \rightarrow \mathbb{C}$ which are smooth in $x$ for all $k \in \mathbb{Z}$, and for $\gamma \in \{0,1\},$
		$$
		k^{\gamma} \Delta_{k}^{\gamma} \sigma(x, k) \in S_{\rho, \Lambda}^{m}\left(\mathbb{T} \times \mathbb{Z}\right).
		$$
	\end{defi} 
	In the same manner, we write $\operatorname{Op} (M_{\rho, \Lambda}^{m}(\mathbb{T} \times \mathbb{Z}))$ for the class of pseudo-differential operators associated with the symbol class $M_{\rho, \Lambda}^{m}\left(\mathbb{T} \times \mathbb{Z}\right)$.
	\begin{remark}\label{contained class}
		For every $m \in \mathbb{R}$ and $\rho \in\left(0, \frac{1}{\mu}\right]$, we have
		$$
		M_{\rho, \Lambda}^{m}\left(\mathbb{T} \times \mathbb{Z}\right) \subset S_{\rho, \Lambda}^{m}
		\left(\mathbb{T} \times \mathbb{Z}\right).
		$$
	\end{remark}
	\begin{lemma}\label{relation in both classes}
		For every $m \in \mathbb{R}$ and $0<\rho \leq \frac{1}{\mu}$, there exists a positive integer $N_{0}$ such that
		\begin{eqnarray}\label{S class in M class}
			S_{\rho, \Lambda}^{m-N_{0}}\left(\mathbb{T} \times \mathbb{Z}\right) \subset M_{\rho, \Lambda}^{m}\left(\mathbb{T} \times \mathbb{Z}\right) \subset S_{\rho, \Lambda}^{m}\left(\mathbb{T} \times \mathbb{Z}\right).
		\end{eqnarray}
		More precisely, $N_{0}:=\left(\frac{1}{\mu_{0}}-\rho\right)$. Moreover,
		\begin{eqnarray}\label{intersection of both classes}
			\bigcap_{m \in \mathbb{R}} M_{\rho, \Lambda}^{m}\left(\mathbb{T} \times \mathbb{Z}\right)=\bigcap_{m \in \mathbb{R}} S_{\rho, \Lambda}^{m}\left(\mathbb{T} \times \mathbb{Z}\right)=S_{\rho, \Lambda}^{-\infty}\left(\mathbb{T} \times \mathbb{Z}\right).
		\end{eqnarray}
	\end{lemma}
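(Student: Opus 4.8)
The plan is to establish the two inclusions in \eqref{S class in M class} separately and then deduce \eqref{intersection of both classes} formally. The right-hand inclusion $M_{\rho, \Lambda}^{m}\subset S_{\rho, \Lambda}^{m}$ is nothing but the case $\gamma=0$ of Definition \ref{M class defi} (equivalently Remark \ref{contained class}), so no work is needed there. For the left-hand inclusion, fix $\sigma\in S_{\rho, \Lambda}^{m-N_{0}}(\mathbb{T}\times\mathbb{Z})$ and verify the two conditions of Definition \ref{M class defi}. The case $\gamma=0$ asks for $\sigma\in S_{\rho, \Lambda}^{m}(\mathbb{T}\times\mathbb{Z})$, which follows at once from $S_{\rho, \Lambda}^{m-N_{0}}\subset S_{\rho, \Lambda}^{m}$: since $\Lambda(k)\ge C_{0}(1+|k|)^{\mu_{0}}\ge C_{0}>0$ by Definition \ref{weight fun. defi}, we have $\Lambda(k)^{-N_{0}}\le C_{0}^{-N_{0}}$, so the estimate \eqref{S class estimate} at level $m-N_{0}$ implies the one at level $m$.

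The substance is the case $\gamma=1$: we must show $k\,\Delta_{k}\sigma(x,k)\in S_{\rho,\Lambda}^{m}(\mathbb{T}\times\mathbb{Z})$, i.e.\ bound $\Delta_{k}^{\alpha}\partial_{x}^{\beta}\big(k\,\Delta_{k}\sigma(x,k)\big)$ by $C_{\alpha,\beta}\Lambda(k)^{m-\rho\alpha}$ for all $\alpha,\beta$. First I would apply the discrete Leibniz rule for the forward difference to the product of the sequence $k\mapsto k$ with $k\mapsto\Delta_{k}\sigma(x,k)$; since $k\mapsto k$ is affine, all of its differences of order $\ge 2$ vanish, and one obtains
\[
\Delta_{k}^{\alpha}\big(k\,\Delta_{k}\sigma(x,k)\big)=k\,\Delta_{k}^{\alpha+1}\sigma(x,k)+\alpha\,\big(\Delta_{k}^{\alpha}\sigma\big)(x,k+1),
\]
while $\partial_{x}^{\beta}$ passes through $\Delta_{k}$. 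Using $\sigma\in S_{\rho,\Lambda}^{m-N_{0}}$, the first term is dominated by $|k|\,\Lambda(k)^{m-N_{0}-\rho(\alpha+1)}$ and the second by $\alpha\,\Lambda(k+1)^{m-N_{0}-\rho\alpha}$. The first of these is $\le C\,\Lambda(k)^{m-\rho\alpha}$ as soon as $|k|\le C\,\Lambda(k)^{N_{0}+\rho}$; since $\Lambda(k)^{N_{0}+\rho}\ge C_{0}^{N_{0}+\rho}(1+|k|)^{\mu_{0}(N_{0}+\rho)}$, this holds precisely when $\mu_{0}(N_{0}+\rho)\ge 1$, that is $N_{0}\ge\frac{1}{\mu_{0}}-\rho$, which is exactly the choice recorded in the statement (any positive integer at least this large does the job).

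For the second term one needs $\Lambda(k+1)$ and $\Lambda(k)$ to be comparable. This slowly-varying property is extracted from \eqref{weight estimate} with $\alpha=1$, $\gamma=0$, which gives $|\Lambda(k+1)-\Lambda(k)|\le C\,\Lambda(k)^{1-1/\mu}\le C'\,\Lambda(k)$ (using again the lower bound on $\Lambda$), hence $\Lambda(k+1)\le C''\Lambda(k)$, and similarly $\Lambda(k+1)\ge c\,\Lambda(k)$. Consequently $\Lambda(k+1)^{m-N_{0}-\rho\alpha}\le C\,\Lambda(k)^{m-N_{0}-\rho\alpha}\le C\,\Lambda(k)^{m-\rho\alpha}$, once more because $\Lambda(k)^{-N_{0}}$ is bounded. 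Combining the two estimates yields $k\,\Delta_{k}\sigma\in S_{\rho,\Lambda}^{m}(\mathbb{T}\times\mathbb{Z})$, so $\sigma\in M_{\rho,\Lambda}^{m}(\mathbb{T}\times\mathbb{Z})$ and \eqref{S class in M class} is proved.

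Finally, \eqref{intersection of both classes} is a formal consequence of \eqref{S class in M class}: intersecting $S_{\rho,\Lambda}^{m-N_{0}}\subset M_{\rho,\Lambda}^{m}\subset S_{\rho,\Lambda}^{m}$ over all $m\in\mathbb{R}$ and reindexing the left-hand family (as $m$ runs over $\mathbb{R}$, so does $m-N_{0}$), the outer two intersections both equal $\bigcap_{m\in\mathbb{R}}S_{\rho,\Lambda}^{m}(\mathbb{T}\times\mathbb{Z})=S_{\rho,\Lambda}^{-\infty}(\mathbb{T}\times\mathbb{Z})$, and the middle intersection is squeezed to the same set. I expect the only real care to be needed in the case $\gamma=1$: writing the discrete Leibniz formula with the shift on the correct argument, and invoking \eqref{weight estimate} to absorb $\Lambda(k+1)$ into $\Lambda(k)$ so that the target exponent $m-\rho\alpha$ comes out exactly.
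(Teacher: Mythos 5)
Your proof is correct, and it is the standard argument: the paper itself gives no details here, deferring to the analogous Lemma 3.10 of \cite{SSM&VK}, and your write-up supplies exactly the computation that reference carries out (discrete Leibniz rule applied to the affine factor $k$, absorption of $|k|$ into $\Lambda(k)^{N_{0}+\rho}$ via the lower bound $\Lambda(k)\geq C_{0}(1+|k|)^{\mu_{0}}$, which is where the value $N_{0}\geq\frac{1}{\mu_{0}}-\rho$ comes from). The only step stated a little quickly is the lower comparability $\Lambda(k+1)\geq c\,\Lambda(k)$, which you do need since the exponent $m-N_{0}-\rho\alpha$ may have either sign; it follows from \eqref{weight estimate} because $C\,\Lambda(k)^{-1/\mu}\leq\frac{1}{2}$ for $|k|$ large (as $\Lambda(k)\to\infty$), with the finitely many remaining $k$ handled trivially.
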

	\begin{pff}
		The proof of the above lemma is similar to the proof of Lemma 3.10 in \cite{SSM&VK}.
	\end{pff}
	\section{Symbolic calculus and parametrix for   $M_{\rho, \Lambda}^{m}(\mathbb{T} \times \mathbb{Z})$ }\label{sec3}
	In  this section  we study the symbolic calculus for pseudo-differential operators associated with symbol in  $M_{\rho, \Lambda}^m(\mathbb{ Z}^n\times \mathbb{T}^n)$  by deriving formulae for   composition, adjoint, transpose of the operators.  We also construct the parametrix of $M$-elliptic  pseudo-differential operators. We start this section with the following result related to the  asymptotic sums of symbols.

	\begin{theorem}\label{asymptotic exp.}
		Let $\left\{m_{j}\right\}_{j \in \mathbb{N}_{0}}$ be a strictly decreasing sequence of real numbers such that $m_{j} \rightarrow-\infty$ as $j \rightarrow \infty$. Suppose $ \sigma_{j} \in M_{\rho, \Lambda}^{m_{j}}\left(\mathbb{T} \times \mathbb{Z}\right), j \in \mathbb{N}_{0}$. Then there exists a symbol $\sigma \in M_{\rho, \Lambda}^{m_{0}}\left(\mathbb{T} \times \mathbb{Z}\right)$ such that
		$$
		\sigma(x, k) \sim \sum_{j=0}^{\infty} \sigma_{j}(x,k),
		$$
		i.e.,
		$$
		\sigma(x, k)-\sum_{j=0}^{N-1} \sigma_{j}(x,k) \in M_{\rho, \Lambda}^{m_{N}}\left(\mathbb{T} \times \mathbb{Z}\right),
		$$
		for every positive integer $N$.
	\end{theorem}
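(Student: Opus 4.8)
The plan is to mimic the classical Borel-type construction of an asymptotic sum from the $\mathbb{R}^n$ theory, transported to the discrete frequency variable by means of a family of frequency cut-offs. Fix an excision function $\psi\in C^\infty(\mathbb{R})$ with $0\le\psi\le1$, $\psi(t)=0$ for $|t|\le1$, $\psi(t)=1$ for $|t|\ge2$, and for $R\ge1$ put $\psi_R(k):=\psi(k/R)$, $k\in\mathbb{Z}$. The candidate symbol will be
$$\sigma(x,k):=\sum_{j=0}^{\infty}\psi_{R_j}(k)\,\sigma_j(x,k),$$
where $1=R_0<R_1<R_2<\cdots\to\infty$ is a sequence to be fixed below. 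Since $\psi_{R_j}(k)=0$ as soon as $|k|\le R_j$, for each fixed $k$ only finitely many summands do not vanish, so $\sigma$ is well defined and smooth in $x$.

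Before choosing the $R_j$ I would record two facts. First, $\Lambda$ is shift-stable, $\Lambda(k+c)\asymp\Lambda(k)$ for $|c|$ bounded, so each class $S_{\rho,\Lambda}^{m}$ is invariant under integer translations in $k$; combined with the discrete Leibniz rule for $\Delta_k^{\alpha}$ and the elementary multiplicativity $S_{\rho,\Lambda}^{a}\cdot S_{\rho,\Lambda}^{b}\subset S_{\rho,\Lambda}^{a+b}$, this also gives $M_{\rho,\Lambda}^{a}\cdot M_{\rho,\Lambda}^{b}\subset M_{\rho,\Lambda}^{a+b}$. Second, the cut-offs are uniformly tame: $\psi_R\in M_{\rho,\Lambda}^{0}(\mathbb{T}\times\mathbb{Z})$ with all seminorms bounded independently of $R\ge1$. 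Indeed $\Delta_k^{\alpha}\psi_R=O(R^{-\alpha})$ for $\alpha\ge1$ and is supported in the transition zone $\{R-\alpha\le|k|\le2R+\alpha\}$, where $\Lambda(k)\le C_1(1+|k|)^{\mu_1}\le CR^{\mu_1}$; since $\rho\mu_1\le\rho\mu\le1$ we get $R^{-\alpha}\le R^{-\rho\mu_1\alpha}\le C\Lambda(k)^{-\rho\alpha}$ there, and the same computation handles $k\Delta_k\psi_R$. Hence $\psi_{R_j}\sigma_j\in M_{\rho,\Lambda}^{m_j}$, with seminorms controlled by those of $\sigma_j$ times a constant $C^{(j)}$ depending only on $j$.

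Now the choice of $R_j$ and the verification. On $\operatorname{supp}\psi_{R_j}\subset\{|k|>R_j\}$ one has $\Lambda(k)\ge C_0(1+R_j)^{\mu_0}$, so for $j\ge1$
$$\Lambda(k)^{m_j-\rho\alpha}\le\Lambda(k)^{m_{j-1}-\rho\alpha}\big(C_0(1+R_j)^{\mu_0}\big)^{m_j-m_{j-1}},$$
and the last factor tends to $0$ as $R_j\to\infty$ because $m_j-m_{j-1}<0$. Expanding $\Delta_k^{\alpha}\partial_x^{\beta}\big(k^{\gamma}\Delta_k^{\gamma}(\psi_{R_j}\sigma_j)\big)$ by the discrete Leibniz rule into sums of (derivative of cut-off)$\times$(shifted $\Delta_k$-derivative of $\sigma_j$ or of $k\Delta_k\sigma_j$), each product is $\le C^{(j)}_{\alpha\beta\gamma}\Lambda(k)^{m_j-\rho\alpha}$, so we may pick $R_j>R_{j-1}$ so large that
$$\big|\Delta_k^{\alpha}\partial_x^{\beta}\big(k^{\gamma}\Delta_k^{\gamma}(\psi_{R_j}\sigma_j)\big)(x,k)\big|\le2^{-j}\,\Lambda(k)^{m_{j-1}-\rho\alpha}$$
for all $\gamma\in\{0,1\}$ and all $|\alpha|,|\beta|\le j$ (finitely many conditions). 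Assume, harmlessly, that $\Lambda\ge1$. Fixing $\alpha,\beta,\gamma$ and splitting the sum for $\sigma$ at $j_0:=\max(|\alpha|,|\beta|,1)$, the $\le j_0+1$ low terms contribute $\le C\Lambda(k)^{m_0-\rho\alpha}$ while the tail contributes $\le\sum_{j>j_0}2^{-j}\Lambda(k)^{m_{j-1}-\rho\alpha}\le\Lambda(k)^{m_0-\rho\alpha}$; hence $k^{\gamma}\Delta_k^{\gamma}\sigma\in S_{\rho,\Lambda}^{m_0}$ for $\gamma\in\{0,1\}$, i.e. $\sigma\in M_{\rho,\Lambda}^{m_0}$. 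For the asymptotic property, write $\sigma-\sum_{j=0}^{N-1}\sigma_j=\sum_{j=0}^{N-1}(\psi_{R_j}-1)\sigma_j+\sum_{j\ge N}\psi_{R_j}\sigma_j$; each term of the first, finite, sum is supported in the finite set $\{|k|<2R_j\}$, hence lies in $S_{\rho,\Lambda}^{-\infty}\subset M_{\rho,\Lambda}^{m_N}$ by Lemma \ref{relation in both classes}, while the second sum is handled by the same split with threshold $\max(N,|\alpha|,|\beta|)$, the tail and the finitely many remaining terms both being $\le C\Lambda(k)^{m_N-\rho\alpha}$ since $m_j\le m_N$ for $j\ge N$. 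This yields $\sigma-\sum_{j=0}^{N-1}\sigma_j\in M_{\rho,\Lambda}^{m_N}$ for every $N$.

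The part I expect to require the most care is the second preliminary fact together with the claim $\psi_{R_j}\sigma_j\in M_{\rho,\Lambda}^{m_j}$: this is exactly where the $k^{\gamma}\Delta_k^{\gamma}$-clause in the definition of $M_{\rho,\Lambda}^{m}$, the discrete Leibniz rule for products, and the standing hypothesis $\rho\le1/\mu$ (which is what makes $\rho\mu_1\le1$, hence $R^{-\alpha}\le C\Lambda(k)^{-\rho\alpha}$ on the transition zone) must all be combined. Once the rescaled cut-offs are seen to lie in $M_{\rho,\Lambda}^{0}$ with $R$-uniform seminorms, the remaining selection of $R_j$ and the term-by-term summation are routine bookkeeping.
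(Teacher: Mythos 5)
Your construction is the same as the paper's -- an excision function rescaled along a sequence of growing thresholds, the locally finite sum $\sigma=\sum_j\psi_{R_j}\sigma_j$, and the split of the remainder into a finitely supported part plus a tail -- so the architecture of the argument matches. Where you genuinely diverge is in the one step that is specific to the $M$-classes, namely upgrading membership from $S_{\rho,\Lambda}^{m}$ to $M_{\rho,\Lambda}^{m}$. The paper never checks the $k^{\gamma}\Delta_k^{\gamma}$-clause of Definition \ref{M class defi} for the infinite sum at all: it first proves the full asymptotic statement in the $S$-scale, and then upgrades by the telescoping identity $\sigma-\sum_{j=0}^{N-1}\sigma_j=\sum_{j=N}^{N'-1}\sigma_j+r_{N'}$ together with the inclusion $S_{\rho,\Lambda}^{m-N_0}\subset M_{\rho,\Lambda}^{m}$ of Lemma \ref{relation in both classes}, choosing $N'$ so large that $m_{N'}<m_N-N_0$; each $\sigma_j$ in the middle block is already an $M$-symbol by hypothesis, so nothing about products of $M$-symbols is needed. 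You instead verify the $k^{\gamma}\Delta_k^{\gamma}$ conditions directly for the series, which forces you to establish two auxiliary facts the paper can do without: that the rescaled cut-offs lie in $M_{\rho,\Lambda}^{0}$ with $R$-uniform seminorms (your transition-zone computation using $\rho\mu_1\le 1$ is correct and is indeed the delicate point), and that the $M$-classes are stable under products via shift-stability of $\Lambda$ and the discrete Leibniz rule. Both routes are valid. Yours is more computational but more self-contained and yields explicit $M$-seminorm control of $\sigma$ and of the remainders, and it also makes explicit the quantitative choice of the thresholds $R_j$ (via the gain $(C_0(1+R_j)^{\mu_0})^{m_j-m_{j-1}}\to 0$ and the $2^{-j}$ summability device), a point the paper leaves implicit in its choice of the sequence $\epsilon_j$; the paper's route is shorter precisely because Lemma \ref{relation in both classes} lets it piggyback entirely on the $S$-class calculus.
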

	\begin{pff}
		Let $\sigma_{j} \in M_{\rho, \Lambda}^{m_{j}}\left(\mathbb{T} \times \mathbb{Z}\right)$. Then from Remark \ref{contained class}, we have $\sigma_{j} \in S_{\rho, \Lambda}^{m_{j}}\left(\mathbb{T} \times \mathbb{Z}\right)$. Consider $\psi \in C^{\infty}\left(\mathbb{R}\right)$ such that $0 \leq \psi \leq 1$ and
		$$
		\psi(k)= \begin{cases}1, & \text { if }|k| \geq 1 \\ 0, & \text { if }|k| \leq \frac{1}{2}\end{cases}  .
		$$
		Let $\left(\epsilon_{j}\right)_{j=0}^{\infty}$ be a sequence of positive real numbers such that $\epsilon_{j}>\epsilon_{j+1} \rightarrow 0$. Define $\psi_{j} \in C^{\infty}\left(\mathbb{R}\right)$, by $\psi_{j}(k):=\psi\left(\epsilon_{j} k\right)$. It is clear that if $\alpha \geq 1$, then the support of $\Delta_{k}^{\alpha} \phi_{j}$ is bounded. Since $\sigma_{j} \in S_{\rho, \Lambda}^{m_{j}}\left(\mathbb{T} \times \mathbb{Z}\right)$, so using discrete Leibniz formula, we have
		$$
		\left|\Delta_{k}^{\alpha}\partial_{x}^{\beta} \left(\psi_{j}(k) \sigma_{j}(x,k)\right)\right| \leq C_{j \alpha \beta} \Lambda(k)^{m_{j}-\rho \alpha} \text {, }
		$$
		where $C_{j \alpha \beta}$ is a positive constant. This means that, $\psi_{j}(k) \sigma_{j}(x,k) \in S_{\rho, \Lambda}^{m_{j}}\left(\mathbb{T} \times \mathbb{Z}\right).$ Note that, when $j$ is large enough, $\Delta_{k}^{\alpha}\left(\psi_{j}(k) \sigma_{j}(x,k)\right)$, (where $\alpha \in \mathbb{N}_{0}$), vanishes for any fixed $k \in \mathbb{Z}$. This justifies the definition
		$$
		\sigma(x,k):=\sum_{j=0}^{\infty} \psi_{j}(k) \sigma_{j}(x,k), \quad (x,k) \in \mathbb{T} \times \mathbb{Z}.
		$$
		Clearly, $\sigma \in S_{\rho, \Lambda}^{m_{0}}\left(\mathbb{T} \times \mathbb{Z}\right)$. Further, we have
		$$
		\begin{aligned}
			&\left|\Delta_{k}^{\alpha} \partial_{x}^{\beta}\left(\sigma(x,k)-\sum_{j=0}^{N-1} \sigma_{j}(x,k)\right)\right| \\
			&\leq \sum_{j=0}^{N-1}\left|\Delta_{k}^{\alpha}\partial_{x}^{\beta}\left\{\left(\psi_{j}(k)-1\right) \sigma_{j}(x,k)\right\}\right|+\sum_{j=N}^{\infty}\left| \Delta_{k}^{\alpha}\partial_{x}^{\beta}\left(\psi_{j}(k) \sigma_{j}(x,k)\right)\right| .
		\end{aligned}
		$$
		Since $\epsilon_{j}>\epsilon_{j+1}$ and $\epsilon_{j} \rightarrow 0$ as $j \rightarrow \infty$, so $\sum_{j=0}^{N-1}\left| \Delta_{k}^{\alpha}\partial_{x}^{\beta}\left\{(\psi_{j}(k)-1) \sigma_{j}(x,k)\right\}\right|$ vanishes, whenever $|k|$ is large. Thus, there exists a positive constant $C_{r N \alpha \beta}$ such that
		$$
		\sum_{j=0}^{N-1}\left| \Delta_{k}^{\alpha}\partial_{x}^{\beta}\left\{\left(\psi_{j}(k)-1\right) \sigma_{j}(x,k)\right\}\right| \leq C_{r N \alpha \beta} \Lambda(k)^{-r}
		$$
		for any $r \in \mathbb{R}$. On the other hand, one can easily show that
		$$
		\sum_{j=N}^{\infty}\left| \Delta_{k}^{\alpha}\partial_{x}^{\beta}\left(\psi_{j}(k) \sigma_{j}(x,k)\right)\right| \leq C_{N \alpha \beta}^{\prime} \Lambda(k)^{m_{N}-\rho \alpha},
		$$
		where $C_{N \alpha \beta}^{\prime}$ is a positive constant. This shows that for every $N \in \mathbb{N}$, we have
		$$
		\sigma(x,k)-\sum_{j=0}^{N-1} \sigma_{j}(x,k) \in S_{\rho, \Lambda}^{m_{N}}\left(\mathbb{T} \times \mathbb{Z}\right) .
		$$
		Since $m_{j} \rightarrow-\infty$, as $j \rightarrow \infty$, using left inclusions in \eqref{S class in M class}, we have $\sigma-\sum_{j=0}^{N-1} \sigma_{j} \in$ $S_{\rho, \Lambda}^{m_{N}}\left(\mathbb{T} \times \mathbb{Z}\right) \subset M_{\rho, \Lambda}^{m_{0}}\left(\mathbb{T} \times \mathbb{Z}\right)$ for a sufficiently large $N$. Hence $\sigma(x,k) \in M_{\rho, \Lambda}^{m_{0}}\left(\mathbb{T} \times \mathbb{Z}\right)$. Furthermore, for all $N \geq 2$ and $N^{\prime} > N$
		$$
		\sigma-\sum_{j=0}^{N-1} \sigma_{j}=\sum_{j=N}^{N^{\prime}-1} \sigma_{j}+r_{N^{\prime}}
		$$
		with $r_{N^{\prime}} \in S_{\rho, \Lambda}^{m_{N^{\prime}}}\left(\mathbb{T} \times \mathbb{Z}\right)$. By choosing a sufficiently large $N^{\prime}$ so that $m_{N^{\prime}}<m_{N}-$ $N_{0}$, we have $r_{N^{\prime}} \in S_{\rho, \Lambda}^{m_{N}-N_{0}}\left(\mathbb{T} \times \mathbb{Z}\right) \subset M_{\rho, \Lambda}^{m_{N}}\left(\mathbb{T} \times \mathbb{Z}\right)$ and therefore $\sigma-\sum_{j=0}^{N-1} \sigma_{j} \in$ $M_{\rho, \Lambda}^{m_{N}}\left(\mathbb{T} \times \mathbb{Z}\right)$. This completes the proof of the theorem.
	\end{pff}
	
	The following results on the basic symbolic calculus of pseudo-differential operators with weighted $M$-symbols on $\mathbb{T} \times \mathbb{Z}$ are analogs of results for pseudo-differential operators with symbols in $S^{m}\left(\mathbb{T}^n \times \mathbb{Z}^n \right)$ given in \cite{MR&VT book} and symbols in $M_{\rho,\Lambda}^{m}\left(\mathbb{Z}^n \times \mathbb{T}^n \right)$ given in \cite{SSM&VK}.
	\begin{theorem}\label{product}
		Let $\sigma \in M_{\rho,\Lambda}^{m}\left(\mathbb{T} \times \mathbb{Z}\right)$ and $\tau \in M_{\rho,\Lambda}^{\mu}\left(\mathbb{T} \times \mathbb{Z}\right)$. Then $T_\sigma T_\tau = T_\lambda$, where $\lambda \in M_{\rho,\Lambda}^{m+\mu}\left(\mathbb{T} \times \mathbb{Z}\right)$ and
		$$\lambda \sim \sum_{\alpha} \frac{(-i)^{|\alpha|}}{\alpha!} (\Delta_{k}^{\alpha}\sigma) (\partial_{x}^{\alpha}\tau).$$
		Here the asymptotic expansion means that
		$$\lambda - \sum_{|\alpha|<N} \frac{(-i)^{|\alpha|}}{\alpha!} (\Delta_{k}^{\alpha}\sigma) (\partial_{x}^{\alpha}\tau) \in M_{\rho,\Lambda}^{m+\mu-\rho N}\left(\mathbb{T} \times \mathbb{Z}\right),$$
		for every positive integer $N$.
	\end{theorem}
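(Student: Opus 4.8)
The plan is to imitate the two-step pattern of the proof of Theorem \ref{asymptotic exp.}: carry out the composition inside the larger toroidal class $S_{\rho,\Lambda}$, where the full toroidal calculus of \cite{MR&VT book} (adapted to the weight $\Lambda$) is available, and then transfer the conclusion to $M_{\rho,\Lambda}$ by combining the sandwich $S_{\rho,\Lambda}^{m'-N_{0}}\subset M_{\rho,\Lambda}^{m'}\subset S_{\rho,\Lambda}^{m'}$ of Lemma \ref{relation in both classes} with the fact that every term of the asymptotic series is itself an $M$-symbol.

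\emph{Step 1 (composition in the $S$-class).} Since $\sigma\in M_{\rho,\Lambda}^{m}\subset S_{\rho,\Lambda}^{m}$ and $\tau\in M_{\rho,\Lambda}^{\mu}\subset S_{\rho,\Lambda}^{\mu}$ by Remark \ref{contained class}, I would plug the quantization \eqref{pseudo-defi} for $T_\sigma$ and $T_\tau$ into $T_\sigma(T_\tau f)$, $f\in C^{\infty}(\mathbb{T})$, and use the rapid decay of $\widehat f$ to interchange the sum over $\mathbb{Z}$ with the integral over $\mathbb{T}$; this identifies $T_\sigma T_\tau=T_\lambda$ with the (absolutely convergent) toroidal symbol
$$\lambda(x,k)=\frac{1}{2\pi}\sum_{\ell\in\mathbb{Z}}\int_{\mathbb{T}}e^{-iz\ell}\,\sigma(x,k+\ell)\,\tau(x+z,k)\,dz .$$
Taylor-expanding $\sigma(x,k+\ell)$ in the lattice variable — $\sigma(x,k+\ell)=\sum_{|\alpha|<N}\frac{\ell^{(\alpha)}}{\alpha!}\Delta_{k}^{\alpha}\sigma(x,k)+r_{N}$, with $\ell^{(\alpha)}=\ell(\ell-1)\cdots(\ell-|\alpha|+1)$ and $r_{N}$ controlled by $\Delta_{k}^{N}\sigma$ — and using the elementary identity $\sum_{\ell}\ell^{\alpha}\widehat{g}(\ell)=(-i)^{|\alpha|}g^{(\alpha)}(0)$ on $\mathbb{T}$ (applied to $g=\tau(x+\cdot,k)$), the lower-order terms of $\ell^{(\alpha)}$ feeding into the remainder, one gets $\lambda\in S_{\rho,\Lambda}^{m+\mu}$ together with
$$\lambda-\sum_{|\alpha|<N}\frac{(-i)^{|\alpha|}}{\alpha!}(\Delta_{k}^{\alpha}\sigma)(\partial_{x}^{\alpha}\tau)\in S_{\rho,\Lambda}^{m+\mu-\rho N}(\mathbb{T}\times\mathbb{Z})$$
for every $N$. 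This is precisely the toroidal composition argument of \cite{MR&VT book}, run with the weight $\Lambda$ in place of $(1+|k|)$, and is the toroidal mirror of the discrete computation in \cite{SSM&VK}; the admissibility estimate \eqref{weight estimate} and the inequality $\rho\mu_{1}\le\rho\mu\le1$ are what let the $\Lambda$-bookkeeping in the remainder estimates close.

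\emph{Step 2 (upgrade to the $M$-class).} I would first record three closure facts. If $a\in M_{\rho,\Lambda}^{m_{1}}$, $b\in M_{\rho,\Lambda}^{m_{2}}$ then $ab\in M_{\rho,\Lambda}^{m_{1}+m_{2}}$, since the discrete Leibniz rule gives $k\Delta_{k}(ab)=(k\Delta_{k}a)\,b(\cdot+1)+a\,(k\Delta_{k}b)$, a product of $S$-symbols of orders $m_{1}$ and $m_{2}$; the operator $\partial_{x}^{\alpha}$ preserves the $M$-order; and $\Delta_{k}^{\alpha}$ sends $M_{\rho,\Lambda}^{m}$ into $M_{\rho,\Lambda}^{m-\rho|\alpha|}$ — here the $S$-estimates lower the order automatically, while the extra requirement $k\Delta_{k}(\Delta_{k}^{\alpha}\sigma)\in S_{\rho,\Lambda}^{m-\rho|\alpha|}$ reduces, on writing $\Delta_{k}\sigma=\frac{1}{k}(k\Delta_{k}\sigma)$ and using the discrete quotient rule, once more to $\rho\mu_{1}\le1$. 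Consequently each $(\Delta_{k}^{\alpha}\sigma)(\partial_{x}^{\alpha}\tau)\in M_{\rho,\Lambda}^{m+\mu-\rho|\alpha|}$. Now, exactly as in the proof of Theorem \ref{asymptotic exp.}, pick $N'$ so large that $m+\mu-\rho N'\le(m+\mu-\rho N)-N_{0}$ and split
$$\lambda-\sum_{|\alpha|<N}\frac{(-i)^{|\alpha|}}{\alpha!}(\Delta_{k}^{\alpha}\sigma)(\partial_{x}^{\alpha}\tau)=\sum_{N\le|\alpha|<N'}\frac{(-i)^{|\alpha|}}{\alpha!}(\Delta_{k}^{\alpha}\sigma)(\partial_{x}^{\alpha}\tau)+\Big(\lambda-\sum_{|\alpha|<N'}\frac{(-i)^{|\alpha|}}{\alpha!}(\Delta_{k}^{\alpha}\sigma)(\partial_{x}^{\alpha}\tau)\Big);$$
the finite middle sum lies in $M_{\rho,\Lambda}^{m+\mu-\rho N}$ by the closure facts, and by Step 1 the parenthesised tail lies in $S_{\rho,\Lambda}^{m+\mu-\rho N'}\subset M_{\rho,\Lambda}^{m+\mu-\rho N}$ via Lemma \ref{relation in both classes}. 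Taking $N=0$ gives $\lambda\in M_{\rho,\Lambda}^{m+\mu}$, and the case of general $N$ is the asserted asymptotic expansion.

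I expect the main obstacle to be Step 1 — checking that the toroidal composition formula of \cite{MR&VT book}, stated there for the unweighted class $S^{m}(\mathbb{T}^{n}\times\mathbb{Z}^{n})$, really survives the replacement of $(1+|k|)$ by $\Lambda$: the delicate points are estimating the Taylor remainder $r_{N}$ and the oscillatory $z$-integral so that the $k$-dependence of every constant is exactly $\Lambda(k)^{m+\mu-\rho N}$, which forces one to use the admissibility estimate \eqref{weight estimate} and the order relations $\mu_{0}\le\mu_{1}\le\mu$, $\rho\mu\le1$ just as in \cite{SSM&VK,kal}. Everything in Step 2 is then formal and parallels the proof of Theorem \ref{asymptotic exp.}; should the split-and-Lemma argument prove awkward, the fallback is to differentiate the explicit formula for $\lambda$ directly, where the factor $\frac{k}{k+\ell}$ then appearing is controlled by balancing it against the decay in $\ell$ produced by integrating by parts in $z$.
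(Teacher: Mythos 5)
The paper does not actually prove Theorem \ref{product}: it only remarks that the result is an analogue of the toroidal composition theorem in \cite{MR&VT book} and of the discrete weighted version in \cite{SSM&VK}, and your two-step plan (run the Ruzhansky--Turunen composition formula inside $S_{\rho,\Lambda}$ with $\Lambda$ replacing $(1+|k|)$, then upgrade to $M_{\rho,\Lambda}$ via the product/difference closure properties and the sandwich of Lemma \ref{relation in both classes}, exactly as at the end of the proof of Theorem \ref{asymptotic exp.}) is precisely the intended route. Your sketch is correct, including the correct identification of the delicate points (the weighted Taylor-remainder estimate via \eqref{weight estimate}, the reabsorption of the lower-order parts of $\ell^{(\alpha)}$, and the fact that $k\Delta_k\Delta_k^{\alpha}\sigma$ can be rewritten using $\Delta_k(k\,\cdot)$ to verify $\Delta_k^{\alpha}:M_{\rho,\Lambda}^{m}\to M_{\rho,\Lambda}^{m-\rho|\alpha|}$), so nothing essential is missing.
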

	\begin{theorem}\label{adjoint}
		Let $\sigma \in M_{\rho,\Lambda}^{m}\left(\mathbb{T} \times \mathbb{Z}\right)$. Then the formal adjoint $T_{\sigma}^{\ast}$ of  $T_{\sigma}$ is the  pseudo-differential operator  $T_{\tau}$, where $\tau \in M_{\rho,\Lambda}^{m}$ and 
		$$\tau \sim \sum_{\alpha} \frac{(-i)^{|\alpha|}}{\alpha!} \Delta_{k}^{\alpha}\partial_{x}^{\alpha}\overline{\sigma}.$$
		Here the asymptotic expansion means that
		$$\tau - \sum_{|\alpha|<N} \frac{(-i)^{|\alpha|}}{\alpha!}\Delta_{k}^{\alpha}\partial_{x}^{\alpha}\overline{\sigma} \in M_{\rho,\Lambda}^{m-\rho N}\left(\mathbb{T} \times \mathbb{Z}\right),$$
		for every positive integer $N$.
	\end{theorem}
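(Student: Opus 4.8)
The strategy is to first establish the adjoint formula for the larger toroidal class $S_{\rho,\Lambda}^{m}(\mathbb{T}\times\mathbb{Z})$ and then transfer it to the subclass $M_{\rho,\Lambda}^{m}(\mathbb{T}\times\mathbb{Z})$ using Lemma \ref{relation in both classes} together with the stability of the $M$-class under $\Delta_{k}$, $\partial_{x}$ and complex conjugation. For the $S$-class step I would start from $(T_{\sigma}f,g)_{L^{2}(\mathbb{T})}=(f,T_{\sigma}^{\ast}g)_{L^{2}(\mathbb{T})}$ for $f,g\in C^{\infty}(\mathbb{T})$, insert \eqref{pseudo-defi}, apply Fubini (legitimate since $f,g$ are smooth), conjugate, and rename the space variables; this exhibits $T_{\sigma}^{\ast}$ as the amplitude operator with amplitude $a(x,y,k)=\overline{\sigma(y,k)}$. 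Then I would carry out the standard toroidal amplitude-to-symbol reduction: a periodic Taylor expansion of $a(x,y,k)$ in $y$ about $y=x$, combined with the duality between multiplication by powers of $(x-y)$ under $\sum_{k}e^{i(x-y)\cdot k}(\,\cdot\,)$ and the forward differences $\Delta_{k}^{\alpha}$ (summation by parts in $k$), exactly as in \cite{MR&VT book} and its weighted adaptation in \cite{SSM&VK}. This yields $\tau(x,k)\sim\sum_{\alpha}\frac{(-i)^{|\alpha|}}{\alpha!}\Delta_{k}^{\alpha}\partial_{x}^{\alpha}\overline{\sigma(x,k)}$ with $\tau\in S_{\rho,\Lambda}^{m}(\mathbb{T}\times\mathbb{Z})$ and $\tau-\sum_{|\alpha|<N}\frac{(-i)^{|\alpha|}}{\alpha!}\Delta_{k}^{\alpha}\partial_{x}^{\alpha}\overline{\sigma}\in S_{\rho,\Lambda}^{m-\rho N}(\mathbb{T}\times\mathbb{Z})$ for every $N$; by Definition \ref{S symbol defi} each $\Delta_{k}^{\alpha}$ improves the order by $\rho|\alpha|$, and the weight hypothesis \eqref{weight estimate} is what lets one control the factors $k^{\gamma}$ produced when finite differences hit $\Lambda(k)^{m-\rho\alpha}$ in the remainder estimates.

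Next I would record a short stability lemma: if $\sigma\in M_{\rho,\Lambda}^{m}(\mathbb{T}\times\mathbb{Z})$ then $\partial_{x}^{\beta}\sigma\in M_{\rho,\Lambda}^{m}$, $\overline{\sigma}\in M_{\rho,\Lambda}^{m}$, and $\Delta_{k}^{\alpha}\sigma\in M_{\rho,\Lambda}^{m-\rho|\alpha|}$. The first two follow at once from Definition \ref{M class defi}, since $\partial_{x}$ commutes with $\Delta_{k}$ and with multiplication by $k^{\gamma}$ and preserves the order in $S_{\rho,\Lambda}$, while conjugation leaves the estimates unchanged. For the third it suffices to treat $\alpha=1$ and iterate: $\Delta_{k}\sigma\in S_{\rho,\Lambda}^{m-\rho}$ because $\sigma\in S_{\rho,\Lambda}^{m}$, and the defining condition of $M_{\rho,\Lambda}^{m-\rho}$ then requires only $k\,\Delta_{k}^{2}\sigma\in S_{\rho,\Lambda}^{m-\rho}$; applying $\Delta_{k}$ to $k\,\Delta_{k}\sigma\in S_{\rho,\Lambda}^{m}$ and using the discrete Leibniz rule $\Delta_{k}(k\,\Delta_{k}\sigma)=(k+1)\Delta_{k}^{2}\sigma+\Delta_{k}\sigma$ gives $(k+1)\Delta_{k}^{2}\sigma\in S_{\rho,\Lambda}^{m-\rho}$, and since $\Delta_{k}^{2}\sigma\in S_{\rho,\Lambda}^{m-2\rho}\subset S_{\rho,\Lambda}^{m-\rho}$ we obtain $k\,\Delta_{k}^{2}\sigma\in S_{\rho,\Lambda}^{m-\rho}$.

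To conclude, by the stability lemma each term $\frac{(-i)^{|\alpha|}}{\alpha!}\Delta_{k}^{\alpha}\partial_{x}^{\alpha}\overline{\sigma}$ lies in $M_{\rho,\Lambda}^{m-\rho|\alpha|}$. Given $N\in\mathbb{N}$, choose $N'>N$ with $m-\rho N'\le m-\rho N-N_{0}$, where $N_{0}$ is as in Lemma \ref{relation in both classes}. Then the $S$-class step gives $\tau-\sum_{|\alpha|<N'}\frac{(-i)^{|\alpha|}}{\alpha!}\Delta_{k}^{\alpha}\partial_{x}^{\alpha}\overline{\sigma}\in S_{\rho,\Lambda}^{m-\rho N'}\subset S_{\rho,\Lambda}^{m-\rho N-N_{0}}\subset M_{\rho,\Lambda}^{m-\rho N}$ by \eqref{S class in M class}, and adding back the finitely many terms with $N\le|\alpha|<N'$, each in $M_{\rho,\Lambda}^{m-\rho|\alpha|}\subset M_{\rho,\Lambda}^{m-\rho N}$, yields $\tau-\sum_{|\alpha|<N}\frac{(-i)^{|\alpha|}}{\alpha!}\Delta_{k}^{\alpha}\partial_{x}^{\alpha}\overline{\sigma}\in M_{\rho,\Lambda}^{m-\rho N}$. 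Taking $N=1$ shows $\tau-\overline{\sigma}\in M_{\rho,\Lambda}^{m-\rho}$, and since $\overline{\sigma}\in M_{\rho,\Lambda}^{m}$ we conclude $\tau\in M_{\rho,\Lambda}^{m}$.

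I expect the main obstacle to be the $S$-class step: correctly organizing the periodic Taylor/summation-by-parts reduction of the amplitude operator with amplitude $\overline{\sigma(y,k)}$ and checking that every remainder closes in the weighted symbol class. This is exactly where \eqref{weight estimate} is indispensable, since otherwise the $k^{\gamma}$ factors generated by differentiating $\Lambda(k)^{m-\rho\alpha}$ cannot be reabsorbed into powers of $\Lambda$. Once that is in place, the stability lemma and Lemma \ref{relation in both classes} make the passage to $M_{\rho,\Lambda}^{m}$ routine.
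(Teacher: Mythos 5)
The paper itself offers no proof of Theorem \ref{adjoint}: it is stated as an analogue of the toroidal adjoint formula in \cite{MR&VT book} and of the weighted lattice version in \cite{SSM&VK}, so there is no in-paper argument to compare yours against. That said, your two-step route is sound and is the natural way to supply the missing argument. The upgrade from the $S$-class expansion to the $M$-class expansion is the genuinely new content, and your handling of it is correct: the discrete Leibniz identity $\Delta_{k}(k\,\Delta_{k}\sigma)=(k+1)\Delta_{k}^{2}\sigma+\Delta_{k}\sigma$ does give $k\,\Delta_{k}^{2}\sigma\in S_{\rho,\Lambda}^{m-\rho}$ from the two defining conditions of Definition \ref{M class defi}, hence $\Delta_{k}\sigma\in M_{\rho,\Lambda}^{m-\rho}$, and iterating places each term $\Delta_{k}^{\alpha}\partial_{x}^{\alpha}\overline{\sigma}$ in $M_{\rho,\Lambda}^{m-\rho\alpha}$; the trick of enlarging $N$ to $N'$ with $m-\rho N'\le m-\rho N-N_{0}$ and invoking the left inclusion of \eqref{S class in M class} is exactly the device the paper itself uses at the end of the proof of Theorem \ref{asymptotic exp.}, so it is entirely in the spirit of the text.

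The one place where your write-up is still a sketch rather than a proof is the $S_{\rho,\Lambda}$-class step, and you have correctly identified it as the technical core. Two points there deserve explicit attention if you flesh it out. First, the amplitude-to-symbol reduction requires not just \eqref{weight estimate} but a temperance property of the weight of the form $\Lambda(k+l)\le C\,\Lambda(k)\,(1+|l|)^{s}$ for some $s$, to control the shifted arguments produced by the finite differences and by the discrete Taylor remainder; Definition \ref{weight fun. defi} only gives $\Lambda(k+l)\lesssim\Lambda(k)^{\mu_{1}/\mu_{0}}(1+|l|)^{\mu_{1}}$ directly, so one must either quote the corresponding lemma from \cite{kal} or \cite{SSM&VK} or derive the sharper bound from \eqref{weight estimate}. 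Second, the remainder term in the periodic Taylor expansion is an amplitude, not a symbol, so the estimate that it lies in $S_{\rho,\Lambda}^{m-\rho N}$ needs the kernel decay argument (integration by parts against $e^{i(x-y)\cdot k}$), not merely the pointwise bound on $\Delta_{k}^{N}\overline{\sigma}$. Neither issue is an obstruction, but both are exactly where a weighted calculus can silently fail, so they should be cited or checked rather than asserted.
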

	Let $\sigma \in M_{\rho,\Lambda}^{m}\left(\mathbb{T} \times \mathbb{Z}\right)$, where $m \in \mathbb{R}$. Then $\sigma$ is said to be $M$-elliptic if there exist positive constants $C$ and $R$ such that
	$$|\sigma(x,k)| \geq C\Lambda(k)^{m}, \quad \forall x \in \mathbb{T}, k \in \mathbb{Z},$$
	with $|k| \geq R$. Naturally, a pseudo-differential operator $T_{\sigma}$ corresponding to such $\sigma$, is said to be $M$-elliptic.\\
	The following lemma is analogous to the Lemma 2 in \cite{GM}, which can be proved in the similar way. So we skip the proof here.
	\begin{lemma}\label{inverse of symbol class}
		Let $\sigma(x,k) \in M_{\rho, \Lambda}^{m}(\mathbb{T} \times \mathbb{Z})$ and $\psi(x, k) \in C^{\infty}\left(\mathbb{T} \times \mathbb{R}\right)$, be such that there exist two sufficiently large positive constants $R^{\prime \prime}>R^{\prime}$ so that $\psi(x, k)=0$, for all $x \in \mathbb{T}$ and $|k| \leq R^{\prime}$, and $\psi(x, k)=1$, for all $x \in \mathbb{T}$ and $|k| \geq R^{\prime \prime}$. Then $q(x,k)=\frac{\psi(x, k)}{p(x, k)} \in  M_{\rho, \Lambda}^{-m}(\mathbb{T} \times \mathbb{Z})$.
	\end{lemma}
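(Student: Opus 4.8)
In the statement $p$ denotes the given symbol $\sigma$, which is assumed \emph{$M$-elliptic} in the sense of the definition preceding the lemma, and $R'$ is taken $\ge R$, so that $\psi$ is supported in $\{|k|\ge R'\}$, where $|p(x,k)|\ge C\Lambda(k)^{m}>0$; consequently $q=\psi/p$ is well-defined on all of $\mathbb{T}\times\mathbb{Z}$ and, being $\psi$ divided by a nowhere-vanishing smooth (in $x$) function on $\operatorname{supp}\psi$, is smooth in $x$ everywhere. The plan is to first prove $q\in S_{\rho,\Lambda}^{-m}(\mathbb{T}\times\mathbb{Z})$ and then to upgrade this to $q\in M_{\rho,\Lambda}^{-m}(\mathbb{T}\times\mathbb{Z})$ by checking the extra condition in Definition \ref{M class defi}.

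For the first step, on any bounded range of $k$ every symbol estimate is trivial, since there $\Lambda$ is bounded above and below and $q(\cdot,k)\in C^{\infty}(\mathbb{T})$. For $|k|$ large enough (depending on $\alpha$), $\psi\equiv1$ on the whole difference stencil, so that $p(x,k)\,q(x,k)=1$ identically; applying $\Delta_{k}^{\alpha}\partial_{x}^{\beta}$ to this identity and expanding with the discrete Leibniz rule isolates the term $p(x,k)\,\Delta_{k}^{\alpha}\partial_{x}^{\beta}q(x,k)$, and solving for it expresses $\Delta_{k}^{\alpha}\partial_{x}^{\beta}q$ as $1/p$ times a sum of products of derivatives of $p$ with strictly lower-order $k$- and $x$-derivatives of $q$ evaluated at shifted points $k+j$. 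Using $p\in S_{\rho,\Lambda}^{m}(\mathbb{T}\times\mathbb{Z})$ (Remark \ref{contained class}), the lower bound $|p|\ge C\Lambda(k)^{m}$, and the elementary consequence of \eqref{weight estimate} that $\Lambda(k\pm j)\asymp\Lambda(k)$ for each fixed $j$, an induction on $|\alpha|+|\beta|$ yields
$$\bigl|\Delta_{k}^{\alpha}\partial_{x}^{\beta}q(x,k)\bigr|\le C_{\alpha\beta}\,\Lambda(k)^{-m-\rho\alpha},\qquad (x,k)\in\mathbb{T}\times\mathbb{Z},$$
that is, $q\in S_{\rho,\Lambda}^{-m}(\mathbb{T}\times\mathbb{Z})$.

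For the second step, by Definition \ref{M class defi} it remains only to show $k\,\Delta_{k}q(x,k)\in S_{\rho,\Lambda}^{-m}(\mathbb{T}\times\mathbb{Z})$ (the case $\gamma=0$ being the first step). On bounded ranges of $k$ this is again trivial, while for $|k|$ large, where $\psi\equiv1$, a direct computation gives
$$k\,\Delta_{k}q(x,k)=k\Bigl(\frac{1}{p(x,k+1)}-\frac{1}{p(x,k)}\Bigr)=-\,\frac{k\,\Delta_{k}p(x,k)}{p(x,k)\,p(x,k+1)}.$$
Since $p\in M_{\rho,\Lambda}^{m}(\mathbb{T}\times\mathbb{Z})$ we have $k\,\Delta_{k}p\in S_{\rho,\Lambda}^{m}(\mathbb{T}\times\mathbb{Z})$, and $1/(p(x,k)p(x,k+1))\in S_{\rho,\Lambda}^{-2m}(\mathbb{T}\times\mathbb{Z})$ by applying the first step to $p$ and to its shift $p(\,\cdot\,,\,\cdot+1)$ (the shift preserving the class because $\Lambda(k+1)\asymp\Lambda(k)$) together with the product property $S_{\rho,\Lambda}^{a}\cdot S_{\rho,\Lambda}^{b}\subseteq S_{\rho,\Lambda}^{a+b}$, which is immediate from the discrete Leibniz rule. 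Hence $k\,\Delta_{k}q\in S_{\rho,\Lambda}^{-m}(\mathbb{T}\times\mathbb{Z})$, and therefore $q\in M_{\rho,\Lambda}^{-m}(\mathbb{T}\times\mathbb{Z})$.

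The only delicate point is the induction in the first step: one must keep track of the shifts $k\mapsto k+j$ produced by the discrete Leibniz rule and absorb them using the two-sided weight bounds of Definition \ref{weight fun. defi} and \eqref{weight estimate}. This is entirely parallel to the continuous statement, Lemma~2 of \cite{GM}, and to its discrete analogue in \cite{SSM&VK}, so no new ingredient is needed; the only genuinely new content is the short second step, recording that the cut-off reciprocal lands in the smaller class $M_{\rho,\Lambda}^{-m}$ and not merely in $S_{\rho,\Lambda}^{-m}$.
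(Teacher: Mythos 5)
Your proof is correct and follows exactly the route the paper intends: the paper itself omits the argument, saying only that the lemma is proved as in Lemma~2 of \cite{GM}, and your two-step scheme (first $q\in S_{\rho,\Lambda}^{-m}$ by inverting the identity $pq=1$ on the region where $\psi\equiv 1$ via the discrete Leibniz rule and induction, then $k\,\Delta_k q\in S_{\rho,\Lambda}^{-m}$ from the quotient formula for $\Delta_k(1/p)$) is precisely that standard argument transplanted to $\mathbb{T}\times\mathbb{Z}$. You also correctly repaired the statement's implicit hypotheses (that $p=\sigma$ is $M$-elliptic and $R'\ge R$, without which $q$ is not even defined) and supplied the one genuinely discrete ingredient, namely $\Lambda(k+j)\asymp\Lambda(k)$ from \eqref{weight estimate}, so nothing is missing.
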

	Using the above lemma, we obtain the parametrix of the elliptic operators.
	\begin{theorem}\label{parametrix thm}
		A symbol $\sigma$ is elliptic in $M_{\rho,\Lambda}^{m}\left(\mathbb{T} \times \mathbb{Z}\right)$ if and only if there exists a symbol $\tau$ in $M_{\rho,\Lambda}^{-m}\left(\mathbb{T} \times \mathbb{Z}\right)$ such that
		\begin{eqnarray}\label{left side parametrix}
			T_{\tau} T_{\sigma}=I+R
		\end{eqnarray}
		and
		\begin{eqnarray}\label{right side parametrix}
			T_{\sigma} T_{\tau}=I+S,
		\end{eqnarray}
		where $R$ and $S$ are pseudo-differential operators with symbols in $\bigcap_{m \in \mathbb{R}} M_{\rho,\Lambda}^{m}\left(\mathbb{T} \times \mathbb{Z}\right)$, and $I$ is the identity operator.
	\end{theorem}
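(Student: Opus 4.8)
The plan is to establish the two implications separately; the engine for the substantive direction ``ellipticity $\Rightarrow$ parametrix'' is the composition formula of Theorem~\ref{product} together with the asymptotic summation of Theorem~\ref{asymptotic exp.}, while the converse is a short symbol estimate. Assume first that $\sigma$ is $M$-elliptic, so $|\sigma(x,k)|\geq C\Lambda(k)^{m}$ for $|k|\geq R$. Fix a cut-off $\psi\in C^{\infty}(\mathbb{T}\times\mathbb{R})$ with $\psi\equiv 0$ on $\{|k|\leq R'\}$ and $\psi\equiv 1$ on $\{|k|\geq R''\}$ for some $R''>R'\geq R$, and set $\tau_{0}(x,k):=\psi(x,k)/\sigma(x,k)$; by Lemma~\ref{inverse of symbol class} (applied to the elliptic symbol $\sigma$) we have $\tau_{0}\in M_{\rho,\Lambda}^{-m}(\mathbb{T}\times\mathbb{Z})$. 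Theorem~\ref{product} gives $T_{\tau_{0}}T_{\sigma}=T_{\lambda}$ with $\lambda\in M_{\rho,\Lambda}^{0}(\mathbb{T}\times\mathbb{Z})$ and $\lambda-\tau_{0}\sigma\in M_{\rho,\Lambda}^{-\rho}(\mathbb{T}\times\mathbb{Z})$; since $\tau_{0}\sigma=\psi$ and $\psi-1$ is supported in $\{|k|\leq R''\}$, hence lies in $S_{\rho,\Lambda}^{-\infty}\subset M_{\rho,\Lambda}^{-\rho}$, we conclude $T_{\tau_{0}}T_{\sigma}=I-E$ with $E=T_{e}$, $e\in M_{\rho,\Lambda}^{-\rho}(\mathbb{T}\times\mathbb{Z})$.

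I then run a Neumann series at the level of symbols. Iterating Theorem~\ref{product}, each power $E^{j}$ is a pseudo-differential operator with symbol in $M_{\rho,\Lambda}^{-j\rho}(\mathbb{T}\times\mathbb{Z})$, so $E^{j}T_{\tau_{0}}=T_{\sigma_{j}}$ with $\sigma_{j}\in M_{\rho,\Lambda}^{-j\rho-m}(\mathbb{T}\times\mathbb{Z})$; as the orders $-j\rho-m$ strictly decrease to $-\infty$, Theorem~\ref{asymptotic exp.} yields $\tau\in M_{\rho,\Lambda}^{-m}(\mathbb{T}\times\mathbb{Z})$ with $\tau\sim\sum_{j\geq 0}\sigma_{j}$. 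Writing $T_{\tau}=\sum_{j=0}^{N-1}E^{j}T_{\tau_{0}}+T_{w_{N}}$ with $w_{N}\in M_{\rho,\Lambda}^{-m-N\rho}(\mathbb{T}\times\mathbb{Z})$ and using the identity $\sum_{j=0}^{N-1}E^{j}(I-E)=I-E^{N}$, one gets $T_{\tau}T_{\sigma}=I-E^{N}+T_{w_{N}}T_{\sigma}$, so the operator $R:=T_{\tau}T_{\sigma}-I=-E^{N}+T_{w_{N}}T_{\sigma}$ has symbol in $M_{\rho,\Lambda}^{-N\rho}(\mathbb{T}\times\mathbb{Z})$ by Theorem~\ref{product}. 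Since $N$ is arbitrary, the symbol of $R$ lies in $\bigcap_{N}M_{\rho,\Lambda}^{-N\rho}(\mathbb{T}\times\mathbb{Z})=\bigcap_{m\in\mathbb{R}}M_{\rho,\Lambda}^{m}(\mathbb{T}\times\mathbb{Z})$, which proves \eqref{left side parametrix}. Running the symmetric construction on the right, starting from $T_{\sigma}T_{\tau_{0}}=I-F$ with $F=T_{f}$, $f\in M_{\rho,\Lambda}^{-\rho}(\mathbb{T}\times\mathbb{Z})$, and asymptotically summing the symbols of $T_{\tau_{0}}F^{j}$, produces $\tau'\in M_{\rho,\Lambda}^{-m}(\mathbb{T}\times\mathbb{Z})$ with $T_{\sigma}T_{\tau'}=I+S'$, $S'$ smoothing. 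Associativity then gives $T_{\tau}-T_{\tau'}=T_{\tau}(I+S')-(I+R)T_{\tau'}=RT_{\tau'}-T_{\tau}S'$, whose symbol is smoothing since the smoothing operators form a two-sided ideal; hence $T_{\sigma}T_{\tau}=I+S'+T_{\sigma}(T_{\tau}-T_{\tau'})=I+S$ with $S$ smoothing, which is \eqref{right side parametrix} for the same $\tau$.

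For the converse, assume \eqref{left side parametrix} holds with $\tau\in M_{\rho,\Lambda}^{-m}(\mathbb{T}\times\mathbb{Z})$ and $R$ with symbol in $\bigcap_{m\in\mathbb{R}}M_{\rho,\Lambda}^{m}=S_{\rho,\Lambda}^{-\infty}$. By Theorem~\ref{product}, $T_{\tau}T_{\sigma}=T_{\mu}$ with $\mu-\tau\sigma\in M_{\rho,\Lambda}^{-\rho}(\mathbb{T}\times\mathbb{Z})$; on the other hand $\mu=1+r$ with $r\in S_{\rho,\Lambda}^{-\infty}$, so $\tau\sigma=1+g$ for some $g\in S_{\rho,\Lambda}^{-\rho}(\mathbb{T}\times\mathbb{Z})$. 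Because $\Lambda(k)\geq C_{0}(1+|k|)^{\mu_{0}}$ with $\mu_{0}>0$, we have $|g(x,k)|\leq C\Lambda(k)^{-\rho}\to 0$ as $|k|\to\infty$, hence $|g(x,k)|\leq\tfrac12$ for $|k|$ sufficiently large; there $|\tau(x,k)\sigma(x,k)|=|1+g(x,k)|\geq\tfrac12$, and combining with $|\tau(x,k)|\leq C'\Lambda(k)^{-m}$ (valid since $\tau\in S_{\rho,\Lambda}^{-m}$ by Remark~\ref{contained class}) gives $|\sigma(x,k)|\geq(2C')^{-1}\Lambda(k)^{m}$ for $|k|$ large, i.e.\ $\sigma$ is $M$-elliptic.

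The main obstacle I anticipate is the bookkeeping in the forward direction: checking that the iterated compositions $E^{j}$ and $E^{j}T_{\tau_{0}}$ stay inside the weighted classes $M_{\rho,\Lambda}^{\bullet}(\mathbb{T}\times\mathbb{Z})$ with exactly the stated orders, and that the asymptotically summed $\tau$ really forces $R$ into every $M_{\rho,\Lambda}^{-N\rho}(\mathbb{T}\times\mathbb{Z})$ rather than merely into the coarser scale $S_{\rho,\Lambda}^{\bullet}$ --- this is exactly where it is essential that Theorems~\ref{asymptotic exp.} and~\ref{product} are phrased for the $M$-classes and that Lemma~\ref{relation in both classes} governs the passage between the two scales. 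The reconciliation of the left and right parametrices, and the converse, are then routine.
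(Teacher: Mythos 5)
Your proposal is correct, and both implications that are ``routine'' (the converse, and the reconciliation of the left and right parametrices via $T_{\tau}-T_{\tau'}=RT_{\tau'}-T_{\tau}S'$) coincide with the paper's argument, the only cosmetic difference in the converse being that you read off ellipticity from \eqref{left side parametrix} while the paper uses \eqref{right side parametrix}. The core construction, however, follows a genuinely different route. The paper builds the full asymptotic expansion of the parametrix symbol explicitly: starting from the same $\tau_{0}=\psi/\sigma$ (justified by Lemma~\ref{inverse of symbol class}), it defines $\tau_{l}$ for $l\geq 1$ by the recursion \eqref{tau_l defi}, chosen precisely so that in the composition expansion \eqref{separation of sum} all terms of order $-\rho l$ cancel, and then verifies $\lambda-1\in M_{\rho,\Lambda}^{-\rho N}$ term by term. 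You instead stop after the leading term, observe $T_{\tau_{0}}T_{\sigma}=I-E$ with $E$ of order $-\rho$ (using that $\tau_{0}\sigma-1=\psi-1$ is compactly supported in $k$, hence smoothing), and sum the operator-level Neumann series $\sum_{j}E^{j}T_{\tau_{0}}$ asymptotically via Theorem~\ref{asymptotic exp.}, with the telescoping identity $\sum_{j<N}E^{j}(I-E)=I-E^{N}$ doing the work that \eqref{separation of sum} does in the paper. Both arguments rest on exactly the same ingredients (Theorems~\ref{asymptotic exp.} and~\ref{product}, Lemma~\ref{inverse of symbol class}, and the inclusion $\bigcap_{N}M_{\rho,\Lambda}^{-N\rho}=S_{\rho,\Lambda}^{-\infty}$ from Lemma~\ref{relation in both classes}); your version buys a cleaner bookkeeping, since the only compositions you must track are powers of a single negative-order operator, while the paper's version buys an explicit recursive formula for every term of the parametrix symbol. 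The one point you should make explicit is that identifying ``$R=-E^{N}+T_{w_{N}}T_{\sigma}$ has symbol in $M_{\rho,\Lambda}^{-N\rho}$ for every $N$'' with ``the symbol of $R$ lies in the intersection'' uses uniqueness of the toroidal symbol of a fixed operator; this is standard in the Ruzhansky--Turunen quantization but deserves a sentence.
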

	\begin{pff}
		First suppose that there exists a symbol $\tau$ in $M_{\rho,\Lambda}^{-m}\left(\mathbb{T} \times \mathbb{Z}\right)$ such that \eqref{left side parametrix} and \eqref{right side parametrix} are true.
		From \eqref{right side parametrix}, we can conclude that
		$$
		I - T_{\sigma} T_{ \tau}  \in \operatorname{Op} (M_{\rho,\Lambda}^{-\infty}(\mathbb{T} \times \mathbb{Z})).
		$$
		Hence, by Theorem \ref{product}, we have
		$$
		1-\sigma(x,k) \tau(x,k) \in M_{\rho,\Lambda}^{-\rho}\left(\mathbb{T} \times \mathbb{Z}\right),
		$$
		So, we can find two positive constants $C$ and $C_{0}$ such that
		$$
		\left|1-\sigma(x,k) \tau(x,k)\right| \leq C_{0} \Lambda(k)^{-\rho} \leq C(1+|k|)^{-\rho \mu_{0}}, \quad (x,k) \in \mathbb{T} \times \mathbb{Z}.
		$$
		Let $R \in \mathbb{N}$ such that $C(1+R)^{-\rho \mu_{0}}<\frac{1}{2}$. Then it follows that
		$$
		\left|\sigma(x,k) \tau(x,k)\right| \geq \frac{1}{2}, \quad \forall \,\, |k| \geq R,
		$$
		and hence
		$$
		\left|\sigma(x, k)\right| \geq \frac{1}{2\left|\tau(x, k)\right|} \geq \frac{1}{2 C^{\prime}}(\Lambda(k))^{m}, \quad \forall \,\, |k| \geq R,
		$$
		since
		$$
		\left|\tau(x, k)\right| \leq C^{\prime}(\Lambda(k))^{-m}, \quad (x,k) \in \mathbb{T} \times \mathbb{Z}.
		$$
		Hence $\sigma$ is an $M$-elliptic symbol of order $m$.\\
		
		Conversely, let us assume that $\sigma$ is an $M$-elliptic symbol of order $m$. Then there exist positive constants $C$ and $R$ such that
		$$|\sigma(x,k)| \geq C\Lambda(k)^{m},$$
		for all $x \in \mathbb{T}$ and for all $k \in \mathbb{Z}$ with $|k| \geq R$. The idea is to find a sequence of symbols $\tau_{j} \in M_{\rho,\Lambda}^{-m-\rho j}( \mathbb{T} \times \mathbb{Z}), j=0,1,2, \ldots.$ Let us assume that this can been done. Then, by Theorem \ref{asymptotic exp.}, there exists a symbol $\tau \in M_{\rho,\Lambda}^{-m}(\mathbb{T} \times \mathbb{Z})$ such that $\tau \sim \sum_{j=0}^{\infty} \tau_{j}$, and, by Theorem \ref{product}, the symbol $\lambda$ of the product $T_{\tau} T_{\sigma}$ is in $M_{\rho,\Lambda}^{0}(\mathbb{T} \times \mathbb{Z})$ such that
		\begin{eqnarray}\label{product formula on lambda}
			\lambda-\sum_{|\gamma|<N} \frac{(-i)^{|\gamma|}}{\gamma !}\left(\partial_{x}^{\gamma} \sigma\right)(\Delta_{k}^{\gamma} \tau) \in M_{\rho,\Lambda}^{-\rho N}(\mathbb{T} \times \mathbb{Z}),
		\end{eqnarray}
		for every positive integer $N$. Also $\tau \sim \sum_{j=0}^{\infty} \tau_{j}$ implies that
		\begin{eqnarray}\label{ass. exp. on tau}
			\tau-\sum_{j=0}^{N-1} \tau_{j} \in M_{\rho,\Lambda}^{-m-\rho N}(\mathbb{T} \times \mathbb{Z}),
		\end{eqnarray}
		for every positive integer $N$. Hence, by \eqref{product formula on lambda} and \eqref{ass. exp. on tau},
		\begin{eqnarray}\label{lambda minus forward difference tau class}
			\lambda-\sum_{|\gamma|<N} \frac{(-i)^{|\gamma|}}{\gamma !}\left(\partial_{x}^{\gamma} \sigma\right) \sum_{j=0}^{N-1}(\Delta_{k}^{\gamma} \tau_{j}) \in M_{\rho,\Lambda}^{-\rho N}(\mathbb{T} \times \mathbb{Z}),
		\end{eqnarray}
		for every positive integer $N$. But we can write
		\begin{equation}\label{separation of sum}
			\begin{aligned}[b]
				& \sum_{|\gamma|<N} \frac{(-i)^{|\gamma|}}{\gamma !} \sum_{j=0}^{N-1}(\Delta_{k}^{\gamma} \tau_{j})\left(\partial_{x}^{\gamma} \sigma\right) \\
				=& \,\tau_{0} \sigma+\sum_{l=1}^{N-1}\left\{\tau_{l} \sigma+\sum_{\substack{|\gamma|+j=l \\
						j<l}} \frac{(-i)^{|\gamma|}}{\gamma !}(\Delta_{k}^{\gamma} \tau_{j})\left(\partial_{x}^{\gamma} \sigma\right)\right\} \\
				+& \sum_{\substack{|\gamma|+j \geq N \\
						|\gamma|<N,\, j<N}} \frac{(-i)^{|\gamma|}}{\gamma !}(\Delta_{k}^{\gamma} \tau_{j})\left(\partial_{x}^{\gamma} \sigma\right) .
			\end{aligned}
		\end{equation}
		To find a sequence of symbols $\tau_{j} \in M_{\rho,\Lambda}^{-m-\rho j}(\mathbb{T} \times \mathbb{Z}), j=0,1,2, \ldots$, we choose $\psi$ to be any function in $C^{\infty}\left(\mathbb{R}^{n}\right)$ such that $\psi(k)=1$, for $|k| \geq 2 R$ and $\psi(k)=0$, for $|k| \leq R$. Define
		\begin{eqnarray}\label{tau_0 defi}
			\tau_{0}(x, k)= \begin{cases}\frac{\psi(k)}{\sigma(x, k)}, & |k|>R, \\ 0, & |k| \leq R,\end{cases}, \quad (x,k) \in \mathbb{T} \times \mathbb{Z}\text{.}
		\end{eqnarray}
		From Lemma \ref{inverse of symbol class}, it is clear that $\tau_0 \in M_{\rho, \Lambda}^{-m}(\mathbb{T} \times \mathbb{Z})$. Now define, $\tau_{l}$, for $l \geq 1$, inductively by
		\begin{eqnarray}\label{tau_l defi}
			\tau_{l}=-\left\{\sum_{\substack{|\gamma|+j=l \\ j<l}} \frac{(-i)^{|\gamma|}}{\gamma !}\left(\partial_{x}^{\gamma} \sigma\right)(\Delta_{k}^{\gamma} \tau_{j})\right\} \tau_{0}.
		\end{eqnarray}
		Then it can be shown that $\tau_{j} \in M_{\rho,\Lambda}^{-m-\rho j}( \mathbb{T} \times \mathbb{Z}), j=0,1,2, \ldots$. Now, by \eqref{tau_0 defi}, $\tau_{0} \sigma=1$, for $|k| \geq 2 R$. The second term on the right hand side of \eqref{separation of sum} vanishes for $|k| \geq 2 R$ by \eqref{tau_0 defi} and \eqref{tau_l defi}. Also the third term there,
		$$
		(\Delta_{k}^{\gamma} \tau_{j})\left(\partial_{x}^{\gamma} \sigma\right) \in M_{\rho,\Lambda}^{-\rho N}(\mathbb{T} \times \mathbb{Z}),
		$$
		whenever $|\gamma|+j \geq N$. Hence, by \eqref{separation of sum},
		\begin{eqnarray}\label{class for third term}
			\sum_{|\gamma|<N} \frac{(-i)^{|\gamma|}}{\gamma !} \sum_{j=0}^{N-1}(\Delta_{k}^{\gamma} \tau_{j})\left(\partial_{x}^{\gamma} \sigma\right)-1 \in M_{\rho,\Lambda}^{-\rho N}(\mathbb{T} \times \mathbb{Z}),
		\end{eqnarray}
		for every positive integer $N$. Thus, by \eqref{lambda minus forward difference tau class} and \eqref{class for third term},
		$$
		\lambda-1 \in M_{\rho,\Lambda}^{-\rho N}(\mathbb{T} \times \mathbb{Z}),
		$$
		for every positive integer $N$. Hence, if we pick $R$ to be the pseudo-differential operator with symbol $\lambda-1$, then the proof of \eqref{left side parametrix} is complete.
		
		By a similar argument, we can find another symbol, $\kappa \in M_{\rho,\Lambda}^{-m}(\mathbb{T} \times \mathbb{Z})$, such that
		\begin{eqnarray}\label{second left side parametrix}
			T_{\sigma} T_{\kappa}=I+R^{\prime},
		\end{eqnarray}
		where $R^{\prime}$ is a pseudo-differential operator with symbol in $\bigcap_{m \in \mathbb{R}} M_{\rho,\Lambda}^{m}(\mathbb{T} \times \mathbb{Z})$. By \eqref{left side parametrix} and \eqref{second left side parametrix},
		$$
		T_{\kappa}+R T_{\kappa}=T_{\tau}+T_{\tau} R^{\prime} .
		$$
		Since $R T_{\kappa}$ and $T_{\tau} R^{\prime}$ are pseudo-differential operators with symbols in $\cap_{m \in \mathbb{R}} M_{\rho,\Lambda}^{m}(\mathbb{T} \times \mathbb{Z})$, it follows that
		\begin{eqnarray}\label{relation in kappa and tau}
			T_{\kappa}=T_{\tau}+R^{\prime \prime},
		\end{eqnarray}
		where
		$$
		R^{\prime \prime}=T_{\tau} R^{\prime}-R T_{\kappa},
		$$
		is another pseudo-differential operator with symbol in $\bigcap_{m \in \mathbb{R}} M_{\rho,\Lambda}^{m}(\mathbb{T} \times \mathbb{Z})$. Hence, by \eqref{second left side parametrix} and \eqref{relation in kappa and tau},
		$$
		T_{\sigma} T_{\tau}=I+S,
		$$
		where
		$$
		S=R^{\prime}-T_{\sigma} R^{\prime \prime} .
		$$
		Since $S$ is a pseudo-differential operator with symbol in $\bigcap_{m \in \mathbb{R}} M_{\rho,\Lambda}^{m}(\mathbb{T} \times \mathbb{Z})$, it follows that \eqref{right side parametrix} is proved.
	\end{pff}
	\section{Compact   $M$-elliptic pseudo-differential operators}\label{cpt SG}\label{sec4}
	This section  is devoted to  study the compact   $M$-elliptic pseudo-differential operators on $\mathbb{ T}$ and $\mathbb{ Z}$.  First, we prove  Gohberg's lemma  for pseudo-differential operators on $\mathbb{ T}$ and $\mathbb{ Z}$ with  symbol in the weighted symbol class  $M_{\rho, \Lambda}^0(\mathbb{ T}\times \mathbb{Z})$ and $M_{\rho, \Lambda}^0(\mathbb{ Z}\times \mathbb{T})$, respectively. Using Gohberg's lemma,   we provide a necessary and sufficient condition to  ensure  the compactness  of  a  pseudo-differential operator  on   $L^2(\mathbb{T})$  and  $\ell^2(\mathbb{Z})$. We start this section by recalling the definition of Fredholm operators.

	Let $X$ and $Y$ be two complex Banach spaces. A closed linear operator $A : X \rightarrow Y$ with dense domain $\mathcal{D}(A)$ is said to be Fredholm if the null space $N(A)$ of $A$, and the null space $N\left(A^{t}\right)$ of the adjoint $A^{t}$ of $A$ are finite-dimensional, and the range space $R(A)$ of $A$ is a closed subspace of $Y.$ For a Fredholm operator $A$, the index $i(A)$ of $A$ is defined by
	$$
	i(A)=\operatorname{dim} N(A)-\operatorname{dim} N(A^{t}).
	$$
	Let $X$ be a complex Banach space and $A$: $X \rightarrow X$ be a closed linear operator with dense domain $\mathcal{D}(A)$. Then the spectrum $\Sigma(A)$ of $A$ is defined by
	$$
	\Sigma(A)=\mathbb{C} \backslash \rho(A),
	$$
	where $\rho(A)$ is the resolvent set of $A$ given by
	$$
	\rho(A)=\{\lambda \in \mathbb{C}: A-\lambda I \text { is bijective}\},
	$$
	and $I$ is the identity operator on $X$. The essential spectrum $\Sigma_{w}(A)$ of $A$ (defined in \cite{Wolf}) is given by
	$$
	\Sigma_{w}(A)=\mathbb{C} \backslash \Phi_{w}(A),
	$$
	where
	$$
	\Phi_{w}(A)=\{\lambda \in \mathbb{C}: A-\lambda I \text { is Fredholm}\}.
	$$
	For a detailed study on essential spectrum of an operator, we refer to \cite{Schechter,Schechterbook}.\\
	
	For $-\infty<s<\infty$, let $J_{s}$ be the pseudo-differential operator with symbol $\sigma_{s}$ given by
	$$
	\sigma_{s}(k)=\left(\Lambda(k)\right)^{-s}, \quad k \in \mathbb{Z}.
	$$
	Clearly, $\sigma_{s} \in M_{\rho,\Lambda}^{-s}\left(\mathbb{T} \times \mathbb{Z}\right)$. In literature, $J_{s}$ is called the Bessel potential of order $s$. For $s \in \mathbb{R}$ and $1\,<\,p\,<\,\infty$, we define the Sobolev space, $H_{\Lambda}^{s,p}$ by $$H_{\Lambda}^{s,p} = \{u \in \mathcal{D}^{\prime}(\mathbb{T}) : J_{-s}\,u \in \,L^p(\mathbb{T})\}.$$ 
	Then $H_{\Lambda}^{s,p}$ is a Banach space in which the norm $\|\,\|_{s,p}$ is given by
	$$\|u\|_{s,p,\Lambda} = \|J_{-s}\,u\|_{L^p(\mathbb{T})}, \quad u\,\in\,H_{\Lambda}^{s,p}.$$
	Note that $H_{\Lambda}^{0,p} = L^p(\mathbb{T}).$\\
	The following well-known compact Sobolev embedding theorem is just the weighted version of \cite[Theorem 2.5]{SM&MW}.
	\begin{theorem}\label{cpt thm}
		Let $s<t$. Then the inclusion $i: H_{\Lambda}^{t, p} \hookrightarrow H_{\Lambda}^{s, p}$ is compact for $1 < p < \infty$.
	\end{theorem}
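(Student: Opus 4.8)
The plan is to reduce the statement to the compactness on $L^p(\mathbb T)$ of a single Bessel potential of strictly negative order, and then to obtain that compactness by approximating the operator in operator norm by finite-rank operators. For the reduction, note that for every real $r$ the Bessel potential $J_{-r}\colon H_\Lambda^{r,p}\to L^p(\mathbb T)$ is an isometric isomorphism, directly from the definition of $\|\cdot\|_{r,p,\Lambda}$, with inverse $J_r$; moreover, since the symbols $\Lambda(k)^{-r}$ of the Bessel potentials multiply, one has $J_aJ_b=J_{a+b}$. A short computation then gives $J_{-s}\circ i\circ J_t=J_{t-s}$ as operators on $L^p(\mathbb T)$, where $J_{t-s}$ is the multiplier operator with symbol $k\mapsto\Lambda(k)^{-(t-s)}$, and $J_{t-s}\in\operatorname{Op}M_{\rho,\Lambda}^{-(t-s)}(\mathbb T\times\mathbb Z)\subset\operatorname{Op}M_{\rho,\Lambda}^{0}(\mathbb T\times\mathbb Z)$ because $\sigma_{t-s}\in M_{\rho,\Lambda}^{-(t-s)}(\mathbb T\times\mathbb Z)$. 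Since $J_{-s}$ and $J_t$ are isomorphisms, $i$ is compact if and only if $J_{t-s}\colon L^p(\mathbb T)\to L^p(\mathbb T)$ is compact; recall $t-s>0$.

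To show $J_{t-s}$ is compact, fix $\chi\in C_c^\infty(\mathbb R)$ with $\chi\equiv1$ near the origin, and for $N\in\mathbb N$ let $A_N$ be the multiplier operator with symbol $\Lambda(k)^{-(t-s)}\chi(k/N)$. This symbol has finite support in $k$, so $A_N$ has finite-dimensional range and is compact. The difference $J_{t-s}-A_N$ is the multiplier operator with symbol $r_N(k):=\Lambda(k)^{-(t-s)}\bigl(1-\chi(k/N)\bigr)$, supported in $\{|k|\gtrsim N\}$. The crux is the bound, with $C_p$ independent of $N$,
\[
\|J_{t-s}-A_N\|_{\mathcal L(L^p(\mathbb T))}\ \le\ C_p\Bigl(\,\sup_{k\in\mathbb Z}|r_N(k)|\ +\ \sup_{k\in\mathbb Z}\bigl|k\,\Delta_k r_N(k)\bigr|\,\Bigr),
\]
which follows from the one–dimensional Marcinkiewicz multiplier theorem on $\mathbb T$, or equivalently from the $L^p$-boundedness of $\operatorname{Op}M_{\rho,\Lambda}^{0}(\mathbb T\times\mathbb Z)$ with norm controlled by finitely many symbol seminorms. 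Since $\Lambda(k)\ge C_0(1+|k|)^{\mu_0}\to\infty$ and $t-s>0$, the first supremum is $O\bigl(N^{-\mu_0(t-s)}\bigr)$. For the second, the defining $\gamma=1$ property of the class $M_{\rho,\Lambda}$, namely $k\,\Delta_k\sigma_{t-s}\in S_{\rho,\Lambda}^{-(t-s)}(\mathbb T\times\mathbb Z)$, combined with the discrete Leibniz formula applied to the factor $1-\chi(k/N)$, gives $|k\,\Delta_k r_N(k)|\le C\,\Lambda(k)^{-(t-s)}$ on the support of $r_N$, hence this supremum is also $O\bigl(N^{-\mu_0(t-s)}\bigr)$. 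Thus $\|J_{t-s}-A_N\|_{\mathcal L(L^p)}\to0$, and $J_{t-s}$, being a norm limit of compact operators, is compact; by the reduction, $i$ is compact.

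I expect the main obstacle to be exactly the uniform multiplier estimate above, and this is where the extra structure of $M_{\rho,\Lambda}$ — the $\gamma=1$ condition giving good bounds on $k\,\Delta_k\sigma$ — enters: since a general weight $\Lambda$ need not be monotone, one cannot simply telescope $\Delta_k\Lambda(k)^{-(t-s)}$, and without that condition the first–difference seminorm need not tend to $0$; for $p\ne2$ even $L^p$-boundedness with controlled norm could fail, as in Fefferman's example. A secondary point of care is to truncate with a smooth dilated cut-off $k\mapsto\chi(k/N)$ rather than with the sharp one $k\mapsto\mathbf 1_{\{|k|\le N\}}$, so that the term in which $\Delta_k$ hits the cut-off near $|k|\sim N$ remains of the same size as the interior terms. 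Alternatively, the conclusion can be reached by interpolation: $J_{t-s}$ is bounded on every $L^q(\mathbb T)$, $1<q<\infty$, and compact on $L^2(\mathbb T)$ (its $L^2$-multiplier norm along the truncations being $\sup_{|k|\gtrsim N}\Lambda(k)^{-(t-s)}\to0$), so a Krasnoselskii-type interpolation theorem for compact operators yields compactness on $L^p(\mathbb T)$ for all $1<p<\infty$.
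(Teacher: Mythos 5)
Your argument is correct and complete, whereas the paper does not actually prove this statement: it only remarks that it is "the weighted version of [Theorem 2.5]" of Molahajloo--Wong and cites that unweighted result. Your route --- conjugating the inclusion by the Bessel potentials to reduce to the compactness of the single multiplier $J_{t-s}$ on $L^p(\mathbb{T})$, and then exhibiting $J_{t-s}$ as an operator-norm limit of the finite-rank truncations $A_N$ --- is the standard one and is essentially what the cited reference does in the unweighted case; the weighted features enter exactly where you say they do. Two points are worth making explicit if you write this up: (i) when the forward difference falls on the cut-off $\chi(\cdot/N)$ you need $\Lambda(k+1)\asymp\Lambda(k)$ to absorb $\Lambda(k+1)^{-(t-s)}$ into $\Lambda(k)^{-(t-s)}$, and this follows from \eqref{weight estimate} with $\alpha=1$, $\gamma=0$, which gives $|\Delta_k\Lambda(k)|\le C\Lambda(k)^{1-1/\mu}$; and (ii) the quantitative bound $\|T_m\|_{\mathcal{L}(L^p)}\le C_p\bigl(\sup_k|m(k)|+\sup_k|k\,\Delta_k m(k)|\bigr)$ should be attributed to the Marcinkiewicz multiplier theorem on $\mathbb{T}$ rather than to the paper's Theorem \ref{bdd thm}, since the latter asserts boundedness but not norm control by symbol seminorms. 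Your interpolation alternative (compactness on $L^2$ from the sup of the symbol, boundedness on all $L^q$, then a Krasnoselskii-type theorem) is also valid and arguably cleaner, as it avoids the multiplier estimate entirely.
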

The following boundedness result on weighted Sobolev space can be found in \cite{kal}.
	\begin{theorem}\label{bdd thm}
		Let $\sigma \in M_{\rho,\Lambda}^{m}\left(\mathbb{T} \times \mathbb{Z}\right),-\infty<m<\infty$. Then $T_{\sigma}: H_{\Lambda}^{s, p} \rightarrow H_{\Lambda}^{s-m, p}$ is a bounded linear operator for $1<p<\infty$.
	\end{theorem}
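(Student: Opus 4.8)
The plan is to reduce the statement to the special case $m=s=0$ and then establish the $L^{p}$-boundedness of $\operatorname{Op}(M_{\rho,\Lambda}^{0}(\mathbb{T}\times\mathbb{Z}))$. Recall that the Bessel potential $J_{s}$ is the Fourier multiplier with symbol $\Lambda(k)^{-s}\in M_{\rho,\Lambda}^{-s}(\mathbb{T}\times\mathbb{Z})$ and that $J_{a}J_{-a}=J_{-a}J_{a}=I$ exactly, since Fourier multipliers compose by pointwise multiplication of their symbols. Given $u\in H_{\Lambda}^{s,p}$, write $u=J_{s}w$ with $w=J_{-s}u\in L^{p}(\mathbb{T})$ and $\|w\|_{L^{p}(\mathbb{T})}=\|u\|_{s,p,\Lambda}$, so that $\|T_{\sigma}u\|_{s-m,p,\Lambda}=\|J_{-(s-m)}\,T_{\sigma}\,J_{s}\,w\|_{L^{p}(\mathbb{T})}$. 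By Theorem \ref{product}, applied twice, the composition $J_{-(s-m)}T_{\sigma}J_{s}$ is a pseudo-differential operator whose symbol belongs to $M_{\rho,\Lambda}^{(s-m)+m+(-s)}(\mathbb{T}\times\mathbb{Z})=M_{\rho,\Lambda}^{0}(\mathbb{T}\times\mathbb{Z})$. Hence it suffices to prove that $T_{\tau}$ extends to a bounded operator on $L^{p}(\mathbb{T})$ for every $\tau\in M_{\rho,\Lambda}^{0}(\mathbb{T}\times\mathbb{Z})$ and every $1<p<\infty$.

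The case $p=2$ is immediate: by Remark \ref{contained class}, $\tau\in S_{\rho,\Lambda}^{0}(\mathbb{T}\times\mathbb{Z})$, and since $\Lambda(k)\ge C_{0}(1+|k|)^{\mu_{0}}$ with $\rho\mu_{0}\le1$, the estimates \eqref{S class estimate} show that $\tau$ lies in an ordinary toroidal H\"ormander class $S^{0}_{\delta}(\mathbb{T}\times\mathbb{Z})$ with $0<\delta\le1$ and no loss of smoothness in $x$; hence $T_{\tau}$ is bounded on $L^{2}(\mathbb{T})$ by the toroidal Calder\'on--Vaillancourt theorem (see \cite{MR&VT book}). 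The substance of the theorem is the passage to general $p$. The strategy is to show that $T_{\tau}$ is a Calder\'on--Zygmund operator on $\mathbb{T}$: one examines its Schwartz kernel $K_{\tau}(x,y)=\sum_{k\in\mathbb{Z}}e^{-2\pi i k(x-y)}\tau(x,k)$, understood as an oscillatory sum smooth away from the diagonal, and verifies H\"ormander's integral regularity condition $\int_{|y-x_{0}|>2|x-x_{0}|}\bigl(|K_{\tau}(x,y)-K_{\tau}(x_{0},y)|+|K_{\tau}(y,x)-K_{\tau}(y,x_{0})|\bigr)\,dy\le C$. These estimates are obtained by summation by parts in $k$ after splitting the sum at the scale $|k|\sim|x-y|^{-1}$; what makes them close — and what genuinely distinguishes $M_{\rho,\Lambda}^{0}$ from the larger class $S_{\rho,\Lambda}^{0}$, for which $L^{p}$-boundedness fails when $p\ne2$ (Fefferman's example) — is precisely the defining condition $k\Delta_{k}\tau\in S_{\rho,\Lambda}^{0}$ together with the weight estimate \eqref{weight estimate}, which refine the control on the iterated differences $\Delta_{k}^{N}\tau$ enough to carry the argument through. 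Granting the H\"ormander condition and the $L^{2}$-bound, the Calder\'on--Zygmund machinery (weak-$(1,1)$ estimate, interpolation with $L^{2}$, and duality) gives boundedness of $T_{\tau}$ on $L^{p}(\mathbb{T})$ for all $1<p<\infty$, which finishes the proof.

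A parallel route, closer to the Euclidean theory of Garello--Morando \cite{GM} and Wong \cite{wong06}, is to invoke the Ruzhansky--Turunen symbol extension theorem (in a weighted form) to obtain from $\tau\in M_{\rho,\Lambda}^{0}(\mathbb{T}\times\mathbb{Z})$ a $1$-periodic Euclidean $M$-type symbol $\widetilde\tau(x,\xi)$ with the same periodic quantization, to apply the known $L^{p}$-boundedness of the corresponding operator on $\mathbb{R}$, and to transfer the conclusion back using the equivalence of the periodic weighted Sobolev norms $\|\cdot\|_{s,p,\Lambda}$ with the restrictions of their Euclidean counterparts. In either approach the main obstacle is the same and sits entirely in the range $p\ne2$: making rigorous the heuristic that the $M$-structure, unlike bare membership in $S_{\rho,\Lambda}^{0}$, restores enough kernel regularity (respectively, is preserved under the symbol extension) for Calder\'on--Zygmund theory to apply. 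The reduction to $m=s=0$, the $L^{2}$-bound, and the interpolation and duality steps are routine.
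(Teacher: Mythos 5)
A preliminary remark on the comparison you were asked for: the paper contains no proof of Theorem \ref{bdd thm} at all --- it is quoted from \cite{kal} --- so there is no internal argument to set yours against. Your reduction of the general case to $m=s=0$ by conjugating with the Bessel potentials (using Theorem \ref{product} to see that $J_{-(s-m)}T_{\sigma}J_{s}$ has symbol in $M_{\rho,\Lambda}^{0}(\mathbb{T}\times\mathbb{Z})$), together with the $L^{2}$ case, is correct and routine, and matches how this kind of result is organized in the sources the paper relies on (\cite{kal}, \cite{SM&MW}, \cite{wong06}).

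The genuine gap is that the single non-routine step --- the Calder\'on--Zygmund kernel estimates for $\tau\in M_{\rho,\Lambda}^{0}(\mathbb{T}\times\mathbb{Z})$ --- is asserted rather than proved; your own phrase ``granting the H\"ormander condition'' concedes precisely the point at issue, and that point is the entire content of the theorem. To see why it cannot be waved through: using only the $S_{\rho,\Lambda}^{0}$ bounds $|\Delta_{k}^{N}\tau(x,k)|\lesssim\Lambda(k)^{-\rho N}\lesssim(1+|k|)^{-\rho\mu_{0}N}$, an $N$-fold summation by parts yields $|K_{\tau}(x,y)|\lesssim|x-y|^{-N}$ only for $N>1/(\rho\mu_{0})$, and since $\rho\mu_{0}$ may be well below $1$ this exponent is useless: $\int_{2|h|<|x-y|<\pi}|x-y|^{-N}\,dy$ blows up as $h\to 0$ when $N>1$, so neither the size bound $|K_{\tau}|\lesssim|x-y|^{-1}$ nor H\"ormander's integral condition follows --- consistently with Fefferman's counterexample for the bare class $S_{\rho,\Lambda}^{0}$. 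The theorem therefore lives entirely in the computation you skip: split the sum at $|k|\sim|x-y|^{-1}$, use the trivial bound $|\tau|\lesssim 1$ on the low frequencies to get the contribution $|x-y|^{-1}$, and on the tail use the $\gamma=1$ estimates $|k\,\Delta_{k}^{\alpha+1}\tau(x,k)|\lesssim\Lambda(k)^{-\rho\alpha}$ from Definition \ref{M class defi} to extract the extra factor $|k|^{-1}$ that makes the high-frequency sum close with the correct exponent; one must also verify that the single extra power of $k$ the class provides (only $\gamma\in\{0,1\}$) suffices in dimension one, and that the same bookkeeping survives the difference estimates in both variables needed for the symmetric H\"ormander condition and for duality. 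Your alternative route through a Euclidean extension has the analogous unfilled hole (that the extension preserves the $M$- and weight structure). Until one of these is carried out, what you have is a correct reduction and a correct outline wrapped around a placeholder where the proof should be.
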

	The following proposition is the weighted version of \cite[Theorem 4.5]{MP} for $M_{\rho,\Lambda}^{0}\left(\mathbb{T} \times \mathbb{Z}\right)$ class.
	\begin{prop}\label{essential spectrum}
		Let $\sigma \in M_{\rho,\Lambda}^{0}\left(\mathbb{T} \times \mathbb{Z}\right)$ be such that
		$$
		\lim _{|k| \rightarrow \infty}\left\{\sup _{x \in[-\pi, \pi]}|\sigma(x, k)|\right\}=0 .
		$$
		Then
		$$
		\Sigma_{e}\left(T_{\sigma}\right)=\{0\} .
		$$
	\end{prop}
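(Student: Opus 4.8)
The plan is to show that $T_\sigma$ is a compact operator on $L^2(\mathbb{T})$, which immediately yields $\Sigma_e(T_\sigma) = \{0\}$, since the essential spectrum of any compact operator on an infinite-dimensional Banach space is exactly $\{0\}$ (a compact operator minus $\lambda I$ is Fredholm for every $\lambda \neq 0$, and $T_\sigma - 0\cdot I = T_\sigma$ is not Fredholm because $L^2(\mathbb{T})$ is infinite-dimensional). So the entire content reduces to establishing the compactness of $T_\sigma$ under the hypothesis $\lim_{|k|\to\infty}\sup_{x}|\sigma(x,k)| = 0$.

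To prove compactness, the natural approach is truncation in the frequency variable. For $N \in \mathbb{N}$, let $\chi_N$ be the indicator of $\{k \in \mathbb{Z} : |k| \le N\}$ and set $\sigma_N(x,k) = \chi_N(k)\,\sigma(x,k)$; let $T_{\sigma_N}$ be the associated operator. First I would observe that each $T_{\sigma_N}$ has finite rank (its range lies in the span of the finitely many exponentials $e^{-2\pi i k\cdot x}$ with $|k|\le N$, with coefficients depending on finitely many Fourier coefficients of $f$), hence is compact. Then I would estimate the operator norm of the tail $T_\sigma - T_{\sigma_N} = T_{(1-\chi_N)\sigma}$ on $L^2(\mathbb{T})$. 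Writing $\varepsilon_N := \sup_{|k|>N}\sup_{x\in\mathbb{T}}|\sigma(x,k)|$, one has $\varepsilon_N \to 0$ as $N\to\infty$ by hypothesis. Using the definition of the quantization and the Plancherel identity, for $f \in C^\infty(\mathbb{T})$ one gets
$$\|T_{(1-\chi_N)\sigma}f\|_{L^2(\mathbb{T})}^2 = \int_{\mathbb{T}}\Big|\sum_{|k|>N} e^{-2\pi i k\cdot x}\sigma(x,k)\widehat f(k)\Big|^2 dx.$$
Here the symbol depends on $x$, so this is not immediately diagonal; I would control it by a standard argument (e.g. bounding $\sup_x$ of the multiplication operator, or an $L^2$ duality/Schur-type estimate) to obtain $\|T_\sigma - T_{\sigma_N}\|_{\mathcal{L}(L^2(\mathbb{T}))} \le C\,\varepsilon_N$ for a constant $C$ depending only on finitely many of the seminorms defining $M_{\rho,\Lambda}^0(\mathbb{T}\times\mathbb{Z})$ (which enter because $x$-dependence must be absorbed). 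Since the $\varepsilon_N \to 0$, we conclude that $T_\sigma$ is the operator-norm limit of the finite-rank operators $T_{\sigma_N}$, hence compact.

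The main obstacle is the $x$-dependence of the symbol: the tail operator is not a Fourier multiplier, so the clean Plancherel bound $\|T_{(1-\chi_N)\sigma}\|_{\mathcal L(L^2)} = \sup_{|k|>N}|\sigma(k)|$ is unavailable. I expect the right tool here is the boundedness theorem (Theorem \ref{bdd thm}) together with Theorem \ref{cpt thm}: since $(1-\chi_N)\sigma$ still lies in $M_{\rho,\Lambda}^0(\mathbb{T}\times\mathbb{Z})$ with seminorms controlled uniformly in $N$, one can alternatively factor through a Sobolev embedding — but to get the norm to go to zero one genuinely needs the smallness $\varepsilon_N\to 0$ of the top-order coefficient, so a quantitative version of the $L^2$-boundedness estimate, tracking the dependence of the bound on $\sup_x|\sigma(x,k)|$ and the $x$-derivative seminorms, is what must be carried out. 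Once that quantitative $L^2$ estimate $\|T_\tau\|_{\mathcal L(L^2)} \le C(\sup_{x,k}|\tau(x,k)| + \text{lower-order seminorms that are small on the tail})$ is in hand, the convergence $T_{\sigma_N}\to T_\sigma$ in operator norm — and therefore the compactness of $T_\sigma$ and the identity $\Sigma_e(T_\sigma)=\{0\}$ — follows immediately.
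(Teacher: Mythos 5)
Your reduction of the statement to the compactness of $T_\sigma$ is legitimate, and it is worth noting that it inverts the logic of the paper: there Proposition \ref{essential spectrum} is simply quoted as the weighted version of Theorem 4.5 of \cite{MP}, and is then used as an \emph{input} (together with essential normality and the Calkin algebra) to deduce in Theorem \ref{characterization of cpt operator on T} that $d=0$ implies compactness. So a self-contained direct proof of compactness would be a genuine alternative route. The problem is that the key estimate you rely on, $\|T_\sigma-T_{\sigma_N}\|_{\mathcal L(L^2)}\le C\,\varepsilon_N$ with $\varepsilon_N=\sup_{|k|>N}\sup_x|\sigma(x,k)|$, is not available in the form you describe. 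Any $L^2$-boundedness estimate for $x$-dependent toroidal symbols (e.g.\ via the Schur test applied to the matrix $\widehat{\tau}(m-k,k)$ of $T_\tau$ in the Fourier basis) necessarily involves $\sup_{x,k}|\partial_x^\beta\tau(x,k)|$ for some $\beta\ge 1$ on the same footing as $\sup_{x,k}|\tau(x,k)|$. These derivative seminorms are \emph{not} ``lower-order terms that are small on the tail'': the hypothesis $\lim_{|k|\to\infty}\sup_x|\sigma(x,k)|=0$ says nothing about $\partial_x^\beta\sigma(x,k)$, which the symbol class only bounds by a constant uniformly in $k$. So the quantitative estimate you defer to the end does not exist as stated, and this is a genuine gap rather than a routine verification.

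The gap can be closed, but it requires one further idea: decompose in the Fourier variable dual to $x$, not only in $k$. Writing $\sigma(x,k)=\sum_{l\in\mathbb Z}\widehat\sigma(l,k)e^{-2\pi i l x}$ gives $T_\sigma=\sum_l M_l A_l$, where $M_l$ is multiplication by $e^{-2\pi i l x}$ and $A_l$ is the Fourier multiplier with symbol $k\mapsto\widehat\sigma(l,k)$. Each $A_l$ has operator norm exactly $\sup_k|\widehat\sigma(l,k)|$ and is compact, since $|\widehat\sigma(l,k)|\le\sup_x|\sigma(x,k)|\to 0$ as $|k|\to\infty$; and the series converges in operator norm because $|\widehat\sigma(l,k)|\le |l|^{-2}\sup_x|\partial_x^2\sigma(x,k)|\le C|l|^{-2}$ for $l\neq 0$. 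Hence $T_\sigma$ is a norm limit of compact operators and is compact. (Equivalently, for your $k$-truncation this yields $\|T_\sigma-T_{\sigma_N}\|_{\mathcal L(L^2)}\le\sum_l\min\bigl(\varepsilon_N,C|l|^{-2}\bigr)\le C'\varepsilon_N^{1/2}$, which still tends to $0$, though not at the linear rate you claimed.) With this replacement your argument is complete and gives an independent proof; without it, the central step is unsupported.
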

	A bounded linear operator $A$ on a complex separable and infinite-dimensional Hilbert space $X$ is essentially normal if $A A^{t}-A^{t} A$ is compact.
	
	The next result is about the essential normality of pseudo-differential operators with wrighted symbol of order 0.
	\begin{prop}\label{essential normal}
		Let $\sigma \in M_{\rho,\Lambda}^{0}\left(\mathbb{T} \times \mathbb{Z}\right)$. Then the bounded linear operator $T_{\sigma}$ : $L^{2}\left(\mathbb{T}\right) \rightarrow L^{2}\left(\mathbb{T}\right)$ is essentially normal.
	\end{prop}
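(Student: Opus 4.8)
The plan is to show that $T_\sigma T_\sigma^\ast - T_\sigma^\ast T_\sigma$ is a compact operator on $L^2(\mathbb{T})$ by computing the symbol of this commutator and showing it has negative order (hence lands in a class whose operators factor through a compact Sobolev embedding). First I would invoke Theorem \ref{adjoint}: the formal adjoint $T_\sigma^\ast = T_\tau$ with $\tau \in M_{\rho,\Lambda}^{0}(\mathbb{T}\times\mathbb{Z})$ and $\tau \sim \sum_\alpha \frac{(-i)^{|\alpha|}}{\alpha!}\Delta_k^\alpha \partial_x^\alpha \overline{\sigma}$, so in particular $\tau - \overline{\sigma} \in M_{\rho,\Lambda}^{-\rho}(\mathbb{T}\times\mathbb{Z})$. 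Then I would apply Theorem \ref{product} twice to obtain the symbols of the two compositions $T_\sigma T_\sigma^\ast = T_\sigma T_\tau$ and $T_\sigma^\ast T_\sigma = T_\tau T_\sigma$, each lying in $M_{\rho,\Lambda}^{0}(\mathbb{T}\times\mathbb{Z})$, with leading terms $\sigma\tau$ and $\tau\sigma$ respectively modulo $M_{\rho,\Lambda}^{-\rho}$.

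The key observation is that the two leading terms cancel: $\sigma\tau = \tau\sigma$ pointwise since these are scalar-valued functions, so the symbol $\delta$ of the commutator $T_\sigma T_\sigma^\ast - T_\sigma^\ast T_\sigma$ satisfies $\delta \in M_{\rho,\Lambda}^{-\rho}(\mathbb{T}\times\mathbb{Z})$. More carefully, writing the asymptotic expansions of both composition symbols out to order $N$, the homogeneous terms of orders $0, -\rho, \dots, -\rho(N-1)$ in $\sigma\#\tau$ and $\tau\#\sigma$ need to be compared; I would argue that each finite partial sum in the difference again lies in some $M_{\rho,\Lambda}^{-\rho N}$, and then using Theorem \ref{asymptotic exp.} conclude $\delta \in M_{\rho,\Lambda}^{-\rho}(\mathbb{T}\times\mathbb{Z})$ (indeed it suffices to identify one negative order). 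By Remark \ref{contained class} and Theorem \ref{bdd thm}, $T_\delta : H_\Lambda^{0,2} = L^2(\mathbb{T}) \to H_\Lambda^{\rho,2}$ is bounded, and composing with the compact inclusion $H_\Lambda^{\rho,2}\hookrightarrow H_\Lambda^{0,2}$ from Theorem \ref{cpt thm} (valid since $\rho > 0$) shows $T_\delta = T_\sigma T_\sigma^\ast - T_\sigma^\ast T_\sigma$ is compact on $L^2(\mathbb{T})$. Since $T_\sigma$ is bounded on $L^2(\mathbb{T})$ (again Theorem \ref{bdd thm} with $m=0$, $s=0$, $p=2$), this establishes that $T_\sigma$ is essentially normal.

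The main obstacle I anticipate is the bookkeeping in comparing the two asymptotic expansions: one must verify that the full expansions of $\sigma\#\tau$ and $\tau\#\sigma$ agree only in the top-order term (they generally do not agree at lower orders, which is exactly why the commutator has order $-\rho$ rather than $-\infty$), so the cancellation argument must be stated at the level of the leading symbol only, and the conclusion that $\delta$ has \emph{some} negative order is what is actually needed. The other point requiring a little care is that $M$-symbols are scalar-valued on $\mathbb{T}\times\mathbb{Z}$ (unlike the matrix-valued symbols on noncommutative compact Lie groups), so the pointwise commutativity $\sigma\tau=\tau\sigma$ is immediate and no genuinely noncommutative phenomenon intervenes; this is what makes the order-$0$ part of the commutator vanish outright.
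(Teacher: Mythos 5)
Your proposal is correct and follows essentially the same route as the paper: take $T_\sigma^\ast = T_\tau$ with $\tau \in M_{\rho,\Lambda}^{0}(\mathbb{T}\times\mathbb{Z})$, use the composition theorem to see that the symbols $\gamma$ of $T_\sigma T_\tau$ and $\tilde\gamma$ of $T_\tau T_\sigma$ both equal $\sigma\tau$ modulo $M_{\rho,\Lambda}^{-\rho}$ (so $\gamma-\tilde\gamma \in M_{\rho,\Lambda}^{-\rho}$ by pointwise commutativity of the scalar leading terms), and then conclude compactness of the commutator from the boundedness theorem $L^2 \to H_\Lambda^{\rho,2}$ together with the compact embedding $H_\Lambda^{\rho,2}\hookrightarrow L^2$. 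Your extra bookkeeping about the lower-order terms of the two expansions is unnecessary but harmless, since, as you note, only the cancellation of the order-zero term is needed.
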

	\begin{pff}
		Let $\tau \in M_{\rho,\Lambda}^{0}\left(\mathbb{T} \times \mathbb{Z}\right)$ be such that $T_{\sigma}^{t}=T_{\tau}$. Then using Theorem \ref{product}, we have
		$$
		T_{\sigma} T_{\tau}=T_{\gamma} \quad \text { and } \quad T_{\tau} T_{\sigma}=T_{\tilde{\gamma}} \text {, }
		$$
		where $\gamma$ and $\tilde{\gamma}$ are symbols of order 0. Moreover, $\gamma-\sigma \tau \in M_{\rho,\Lambda}^{-\rho}\left(\mathbb{T} \times \mathbb{Z}\right)$ and $\tilde{\gamma}-\sigma \tau \in M_{\rho,\Lambda}^{-\rho}\left(\mathbb{T} \times \mathbb{Z}\right)$. Therefore, $\gamma-\tilde{\gamma} \in M_{\rho,\Lambda}^{-\rho}\left(\mathbb{T} \times \mathbb{Z}\right)$. Hence, by Theorem \ref{cpt thm} and \ref{bdd thm}, we get
		$$
		T_{\sigma} T_{\sigma}^{t}-T_{\sigma}^{t} T_{\sigma}=T_{\gamma-\tilde{\gamma}}: L^{2}\left(\mathbb{T}\right) \rightarrow H_{\Lambda}^{\rho,2} \hookrightarrow L^{2}\left(\mathbb{T}\right)
		$$
		is compact, which completes the proof.
	\end{pff}
	
	The following theorem is known as Gohberg's lemma in the literature.
	\begin{theorem}\label{gohberg lemma}
		Let $\sigma \in M_{\rho,\Lambda}^{0}\left(\mathbb{T} \times \mathbb{Z}\right)$. Then for all compact operators $K$ on $L^{2}\left(\mathbb{T}\right)$,
		\begin{eqnarray}\label{gohberg inequality}
			\left\|T_{\sigma}-K\right\|_{*} \geq d,
		\end{eqnarray}
		where
		$$
		d=\limsup _{|k| \rightarrow \infty}\left\{\sup _{x \in[-\pi, \pi]}|\sigma(x,k)|\right\} .
		$$
		Here $\|\cdot\|_{*}$ denotes the norm in the $C^{*}$-algebra of all bounded linear operators on $L^2(\mathbb{T})$.
	\end{theorem}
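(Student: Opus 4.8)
The plan is to show that for every compact operator $K$ on $L^2(\mathbb{T})$ and every $\varepsilon>0$, one can construct a normalised sequence $(u_j)$ in $L^2(\mathbb{T})$ that converges weakly to $0$ and along which $\|(T_\sigma-K)u_j\| \geq d-\varepsilon$ for large $j$; since $K$ is compact, $\|Ku_j\|\to 0$, so this forces $\|T_\sigma-K\|_* \geq \|(T_\sigma-K)u_j\| \geq d-\varepsilon$ and letting $\varepsilon\to 0$ gives the claim. The natural building blocks are the exponentials $e_k(x)=e^{-2\pi i k\cdot x}$, for which $T_\sigma e_k(x) = \sigma(x,k)e_k(x)$; these already form a weakly null normalised sequence, but $\|T_\sigma e_k\|_{L^2} = \|\sigma(\cdot,k)\|_{L^2}$ only controls an $L^2$-average of $\sigma(\cdot,k)$ in $x$, whereas $d$ is defined via the supremum over $x$. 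To bridge this gap I would, for a sequence $|k_j|\to\infty$ realising the $\limsup$ and points $x_j\in[-\pi,\pi]$ with $|\sigma(x_j,k_j)|\to d$, localise: take $u_j = e_{k_j}\,\varphi_j$ where $\varphi_j$ is an $L^2$-normalised bump concentrating at $x_j$ on a scale shrinking slowly enough that $\sigma(x,k_j)\approx\sigma(x_j,k_j)$ on the support of $\varphi_j$ (using smoothness in $x$ and the uniform bounds on $\partial_x\sigma$ coming from $\sigma\in M^0_{\rho,\Lambda}\subset S^0_{\rho,\Lambda}$, which give $|\partial_x\sigma(x,k)|\leq C\Lambda(k)^0 = C$ uniformly).

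The main technical point is that $u_j = e_{k_j}\varphi_j$ is no longer an eigenfunction of $T_\sigma$, so $T_\sigma u_j$ must be estimated directly. I would split $T_\sigma u_j(x) = \sum_k e_k(x)\sigma(x,k)\widehat{\varphi_j e_{k_j}}(k)$; writing $\widehat{\varphi_j e_{k_j}}(k) = \widehat{\varphi_j}(k - k_j)$, the dominant contribution comes from $k$ near $k_j$, and on that range $\Lambda(k)\sim\Lambda(k_j)$ while $\sigma(x,k)$ stays close to $\sigma(x_j,k_j)$ provided the bump $\varphi_j$ has Fourier transform concentrated on a window of width small compared to the scale on which $\sigma(\cdot,k)$ varies in $k$ — here the difference-operator bounds $|\Delta_k\sigma(x,k)|\leq C\Lambda(k)^{-\rho}$ are what make such a window exist. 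The tail $|k-k_j|$ large is controlled because $\widehat{\varphi_j}$ is Schwartz (choose $\varphi_j$ smooth) and $\sigma$ is bounded. Carrying this out shows $\|T_\sigma u_j - \sigma(x_j,k_j)u_j\|_{L^2}\to 0$, hence $\|T_\sigma u_j\|_{L^2}\to d$, while $\langle u_j, v\rangle\to 0$ for each fixed $v$ (since the frequencies $k_j$ escape to infinity), i.e. $u_j\rightharpoonup 0$.

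With the sequence $(u_j)$ in hand, compactness of $K$ gives $\|Ku_j\|_{L^2}\to 0$, so
\begin{eqnarray*}
\|T_\sigma - K\|_* \;\geq\; \limsup_{j\to\infty}\|(T_\sigma-K)u_j\|_{L^2} \;\geq\; \limsup_{j\to\infty}\bigl(\|T_\sigma u_j\|_{L^2} - \|Ku_j\|_{L^2}\bigr) \;=\; d,
\end{eqnarray*}
which is precisely \eqref{gohberg inequality}.

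I expect the genuine obstacle to be the quantitative localisation estimate $\|T_\sigma u_j - \sigma(x_j,k_j)u_j\|_{L^2}\to 0$: one must choose the spatial scale of $\varphi_j$ and the frequency window jointly so that both the $x$-variation of $\sigma$ and the $k$-variation of $\sigma$ over the relevant ranges are negligible, and simultaneously keep $u_j$ weakly null — a balance that uses both the $\partial_x$ and the $\Delta_k$ estimates defining $S^0_{\rho,\Lambda}$ (and in fact only the $S$-class membership of Remark \ref{contained class} is needed here, not the finer $M$-class structure). Everything else — weak nullity, the compact-operator step, the final inequality — is routine. An alternative, if one prefers to avoid the bump construction, is to work on the lattice side: conjugating through the relation between toroidal and lattice quantizations one can instead test against translated $\ell^2(\mathbb{Z})$-bumps, but the analytic core is the same.
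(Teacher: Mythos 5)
Your proposal follows essentially the same route as the paper's proof: pick a sequence $(x_{k_m},k_m)$ realising the $\limsup$, test $T_\sigma-K$ against the modulated translates $u_{k_m}(x)=e^{ik_m x}u(x-x_{k_m})$ of a bump, use weak nullity and compactness to kill $\|Ku_{k_m}\|$, and reduce everything to the localisation estimate $\|T_\sigma u_{k_m}-\sigma(\cdot,k_m)u_{k_m}\|_{L^2(\mathbb{T})}\to 0$, which is exactly the paper's Lemma \ref{L^2 convergence of sigma}. The only real difference is in how that lemma is executed. The paper keeps the bump fixed, supported in $|x|<\delta$ with $\delta=\delta(\epsilon)$ chosen from the uniform bound on $\partial_x\sigma$ (and lets $\epsilon\to 0$ only at the very end), and proves the lemma by Taylor-expanding the translated bump in the spatial variable: the admissible difference operator associated with $q(x)=e^{-2\pi ix}-1$ converts each term of the expansion exactly into $\Delta^{\alpha}\sigma(x,k_m)=O\bigl(\Lambda(k_m)^{-\rho\alpha}\bigr)$, which tends to $0$ as $|k_m|\to\infty$. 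Your Fourier-side version --- windowing $\widehat{\varphi_j}(k-k_j)$ near $k_j$ and summing the bounds $|\Delta_k\sigma(x,k)|\leq C\Lambda(k)^{-\rho}$ across the window --- proves the same statement, but the shrinking-bump device you introduce creates precisely the scale-balancing problem you flag as the genuine obstacle, and it is avoidable: with a bump of fixed width per $\epsilon$ the window (or Taylor) analysis goes through with no tuning, since the relevant frequency window is then $O(1)$ while $\Lambda(k_j)^{-\rho}\to 0$. You are also correct that only the $S^{0}_{\rho,\Lambda}$ estimates enter here, not the finer $M$-class structure; the paper's argument uses nothing more.
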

	\begin{pff}
		Let $u$ be a nonzero function in $C^{\infty}\left(\mathbb{T}\right)$. Then
		$$
		\begin{aligned}
			& \left(T_{\sigma} u\right)(x)=(2 \pi)^{-1} \sum_{k \in \mathbb{Z}}\left\{\int_{-\pi}^{\pi} e^{i k(x-y)} \sigma(x, k) u(y) d y\right\} \\
			& =(2 \pi)^{-1} \int_{-\pi}^{\pi}\left(\mathcal{F}_{\mathbb{Z}} \sigma)(x, x-y) u(y) d y\right. \\
			& =(2 \pi)^{-1} \int_{-\pi}^{\pi}\left(\mathcal{F}_{\mathbb{Z}} \sigma\right)(x, y) u(x-y) d y, \quad x \in[-\pi, \pi], 
		\end{aligned}
		$$
		where $\mathcal{F}_{\mathbb{Z}} \sigma$ is the Fourier transform of $\sigma$ with respect to the second variable in the sense of distribution. Since $\sigma \in C^{\infty}\left(\mathbb{T} \times \mathbb{Z}\right)$, it follows that for all $k \in \mathbb{Z}$, there exists $x_{k} \in[-\pi, \pi]$ such that
		$$
		\left|\sigma\left(x_{k}, k\right)\right|=\sup _{x \in[-\pi, \pi]}|\sigma(x, k)| .
		$$
		By the definition of $d$, there exists a sequence $\left\{\left(x_{k_{m}}, k_{m}\right)\right\}_{m=1}^{\infty}$ such that
		$$
		\left|k_{m}\right| \rightarrow \infty,
		$$
		and
		$$
		\left|\sigma\left(x_{k_{m}}, k_{m}\right)\right| \rightarrow d
		$$
		as $m \rightarrow \infty$. For $m=1,2, \ldots$, we define the function $u_{k_{m}}$ on $\mathbb{T}$ by
		$$
		u_{k_{m}}(x)=e^{i k_{m} x} u\left(x-x_{k_{m}}\right), \quad x \in[-\pi, \pi] .
		$$
		Then
		$$
		\left\|u_{k_{m}}\right\|_{L^{2}\left(\mathbb{T}\right)}=\|u\|_{L^{2}\left(\mathbb{T}\right)}, \quad m=1,2, \ldots,
		$$
		and moreover, using the Riemann-Lebesgue lemma, one can easily show that $u_{k_{m}} \rightarrow 0$ weakly as $k \rightarrow \infty$. Let $K : L^{2}(\mathbb{ T}) \rightarrow L^{2}(\mathbb{ T})$ be a compact operator and $\epsilon$ be an arbitrary positive number. Then
		$$
		\left\|K u_{k_{m}}\right\|_{L^{2}\left(\mathbb{T}\right)} \rightarrow 0
		$$
		as $m \rightarrow \infty$, and hence, for sufficiently large $m$,
		\begin{eqnarray}\label{cpt operator estimate}
			\left\|K u_{k_{m}}\right\|_{L^{2}\left(\mathbb{T}\right)} \leq \epsilon\|u\|_{L^{2}\left(\mathbb{T}\right)} .
		\end{eqnarray}
		\begin{lemma}\label{L^2 convergence of sigma}
			$\left\|\sigma\left(\cdot, k_{m}\right) u_{k_{m}}-T_{\sigma} u_{k_{m}}\right\|_{L^{2}\left(\mathbb{T}\right)} \rightarrow 0$ as $m \rightarrow \infty$.
		\end{lemma}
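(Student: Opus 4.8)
The plan is to move to the Fourier side, where the difference $T_\sigma u_{k_m}-\sigma(\cdot,k_m)u_{k_m}$ collapses into a single series that measures the oscillation of the symbol in the covariable. I would write the Fourier series of the fixed function $u\in C^\infty(\mathbb{T})$ as $u(y)=\sum_{j}\widehat u(j)\,e^{ijy}$; since $u$ is smooth, the coefficients $\{\widehat u(j)\}_{j\in\mathbb{Z}}$ are rapidly decreasing, so every series below converges absolutely and all rearrangements are legitimate. From $u_{k_m}(x)=e^{ik_mx}u(x-x_{k_m})$ one computes $\widehat{u_{k_m}}(k)=e^{-i(k-k_m)x_{k_m}}\,\widehat u(k-k_m)$, and therefore, after the substitution $j=k-k_m$,
\[
T_\sigma u_{k_m}(x)=\sum_{j}\widehat u(j)\,e^{-ijx_{k_m}}\,e^{i(j+k_m)x}\,\sigma(x,j+k_m),\qquad
\sigma(x,k_m)u_{k_m}(x)=\sum_{j}\widehat u(j)\,e^{-ijx_{k_m}}\,e^{i(j+k_m)x}\,\sigma(x,k_m).
\]
Subtracting and using that each exponential factor has modulus $1$, one gets the pointwise bound
\[
\bigl|T_\sigma u_{k_m}(x)-\sigma(x,k_m)u_{k_m}(x)\bigr|\le \sum_{j}|\widehat u(j)|\,\bigl|\sigma(x,j+k_m)-\sigma(x,k_m)\bigr|,
\]
so by Minkowski's inequality in $L^2(\mathbb{T})$ there is a constant $C$ with
\[
\bigl\|T_\sigma u_{k_m}-\sigma(\cdot,k_m)u_{k_m}\bigr\|_{L^2(\mathbb{T})}\le C\sum_{j}|\widehat u(j)|\,\sup_{x}\bigl|\sigma(x,j+k_m)-\sigma(x,k_m)\bigr|.
\]

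Next I would control the oscillation in $k$ using the symbol class. By Remark \ref{contained class} we have $\sigma\in S_{\rho,\Lambda}^{0}(\mathbb{T}\times\mathbb{Z})$, so \eqref{S class estimate} with $\alpha=1,\ \beta=0$ gives $|\Delta_k\sigma(x,k)|\le C_1\Lambda(k)^{-\rho}$ uniformly in $x$, and by Definition \ref{weight fun. defi}, $\Lambda(k)^{-\rho}\le C(1+|k|)^{-\rho\mu_0}\to 0$ as $|k|\to\infty$. Telescoping the forward differences, for each fixed $j$ the quantity $\sup_x|\sigma(x,j+k_m)-\sigma(x,k_m)|$ is a sum of at most $|j|$ terms of the form $C_1\Lambda(k_m+i)^{-\rho}$, each of which tends to $0$ as $m\to\infty$ because $|k_m|\to\infty$; hence $\sup_x|\sigma(x,j+k_m)-\sigma(x,k_m)|\to 0$ for every fixed $j$.

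It then remains to pass to the limit inside the sum over $j$. Each summand $|\widehat u(j)|\sup_x|\sigma(x,j+k_m)-\sigma(x,k_m)|$ is dominated by $2\,\bigl(\sup_{x,k}|\sigma(x,k)|\bigr)\,|\widehat u(j)|$, which is summable in $j$ by the rapid decay of $\widehat u$ and is independent of $m$, and each summand tends to $0$ as $m\to\infty$ by the previous step. The dominated convergence theorem for series then forces the right-hand side above to tend to $0$, which proves the lemma. I expect the only step requiring genuine care to be this last one — producing a fixed $j$-summable majorant so the termwise limit may be pushed through the infinite sum — together with the initial bookkeeping of rewriting $T_\sigma u_{k_m}-\sigma(\cdot,k_m)u_{k_m}$ as one absolutely convergent Fourier series; the symbol estimates invoked along the way are immediate from the definition of $M_{\rho,\Lambda}^{0}(\mathbb{T}\times\mathbb{Z})$.
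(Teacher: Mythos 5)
Your proof is correct, but it takes a genuinely different route from the paper's. The paper represents $T_{\sigma}u_{k_m}$ as an integral against the kernel $K(x,y)=(\mathcal{F}_{\mathbb{Z}}\sigma)(x,y)$ and then Taylor-expands $u_{x_{k_m}}(x-y)$ in $y$ using the strongly admissible difference operator generated by $q(y)=e^{-2\pi i y}-1$; the zeroth-order Taylor term reproduces $\sigma(x,k_m)u_{k_m}(x)$, while each higher term is identified with $\Delta^{\alpha}\sigma(x,k_m)$ times a derivative of $u_{x_{k_m}}$, and the whole remainder is killed by the bound $|\Delta^{\alpha}\sigma(x,k_m)|\le C_\alpha\Lambda(k_m)^{-\rho\alpha}\to 0$. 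You instead work entirely on the Fourier side: the identity $\widehat{u_{k_m}}(k)=e^{-i(k-k_m)x_{k_m}}\widehat u(k-k_m)$ turns the difference into a single absolutely convergent series weighted by $|\widehat u(j)|$ against the oscillation $\sup_x|\sigma(x,j+k_m)-\sigma(x,k_m)|$, which you control by telescoping the forward differences and then pass to the limit by dominated convergence. The two arguments rest on the same analytic fact --- the decay $|\Delta_k\sigma(x,k)|\lesssim\Lambda(k)^{-\rho}\le C(1+|k|)^{-\rho\mu_0}\to 0$ furnished by $M^{0}_{\rho,\Lambda}\subset S^{0}_{\rho,\Lambda}$ and Definition \ref{weight fun. defi} --- but yours is more elementary and self-contained: it avoids the Taylor expansion machinery of \cite{MR&VT&JW} and the somewhat delicate bookkeeping of the remainder term $I_3$, at the modest cost of having to justify the interchange of limit and sum (which your uniform bound $|\sigma(x,k)|\le C_{0,0}$ from the $\alpha=\beta=0$ symbol estimate handles cleanly). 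Your argument also applies verbatim to any nonzero $u\in C^{\infty}(\mathbb{T})$ without reference to the support restriction imposed on $u$ later in the proof of Theorem \ref{gohberg lemma}, which is consistent with how the lemma is stated. All the individual computations (the shifted Fourier coefficients, the Minkowski step, the telescoping bound by at most $|j|$ terms each tending to zero as $|k_m|\to\infty$) check out.
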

		We assume the lemma for a moment and continue with the proof of Theorem \ref{gohberg lemma}. By Lemma \ref{L^2 convergence of sigma}, we have
		\begin{eqnarray}\label{lemma estimate}
			\left\|\sigma\left(\cdot, k_{m}\right) u_{k_{m}}\right\|_{L^{2}\left(\mathbb{T}\right)}-\left\|T_{\sigma} u_{k_{m}}\right\|_{L^{2}\left(\mathbb{T}\right)} \leq \epsilon\|u\|_{L^{2}\left(\mathbb{T}\right)},
		\end{eqnarray}
		for sufficiently large $m$. Since $\sigma \in C^{\infty}\left(\mathbb{T} \times \mathbb{Z}\right)$, so all derivatives of $\sigma\left(\cdot, k_{m}\right)$ exist and are bounded, and hence, there exists a positive number $\delta$ such that for all $x$ in $\left[-\pi+x_{k_{m}}, \pi+x_{k_{m}}\right]$ with $\left|x-x_{k_{m}}\right|<\delta$, we have
		\begin{eqnarray}\label{uniform continuity of sigma}
			\left|\sigma\left(x, k_{m}\right)-\sigma\left(x_{k_{m}}, k_{m}\right)\right|<\epsilon .
		\end{eqnarray}
		Choose $u \in C^{\infty}\left(\mathbb{T}\right)$ be such that
		$$
		u(x)=0, \quad|x| \geq \delta .
		$$
		Then $u_{k_{m}}(x)=0$ for all $x$ in $\left[-\pi+x_{k_{m}}, \pi+x_{k_{m}}\right]$ with $\left|x-x_{k_{m}}\right| \geq \delta .$ So,
		$$
		\begin{aligned}
			& \left\|\sigma\left(x_{k_{m}}, k_{m}\right) u_{k_{m}}\right\|_{L^{2}\left(\mathbb{T}\right)}-\left\|\sigma\left(\cdot, k_{m}\right) u_{k_{m}}\right\|_{L^{2}\left(\mathbb{T}\right)} \\
			& \leq\left\|\sigma\left(x_{k_{m}}, k_{m}\right) u_{k_{m}}-\sigma\left(\cdot, k_{m}\right) u_{k_{m}}\right\|_{L^{2}\left(\mathbb{T}\right)} \\
			& =\left\{\int_{-\pi+x_{k_{m}}}^{\pi+x_{k_{m}}}\left|\sigma\left(x, k_{m}\right)-\sigma\left(x_{k_{m}}, k_{m}\right)\right|^{2}\left|u_{k_{m}}(x)\right|^{2} d x\right\}^{1/2} \\
			& =\left\{\int_{\left\{x \in\left[-\pi+x_{k_{m}}, \pi+x_{k_{m}}\right]:\left|x-x_{k_{m}}\right|<\delta\right\}}\left|\sigma\left(x, k_{m}\right)-\sigma\left(x_{k_{m}}, k_{m}\right)\right|^{2}\left|u_{k_{m}}(x)\right|^{2} d x\right\}^{1 / 2} . 
		\end{aligned}
		$$
		Hence, by \eqref{uniform continuity of sigma},
		\begin{eqnarray}\label{difference of symbol estimate}
			\left|\sigma\left(x_{k_{m}}, k_{m}\right)\right|\|u\|_{L^{2}\left(\mathbb{T}\right)}-\left\|\sigma\left(\cdot, k_{m}\right) u_{k_{m}}\right\|_{L^{2}\left(\mathbb{T}\right)} \leq \epsilon\|u\|_{L^{2}\left(\mathbb{T}\right)}.
		\end{eqnarray}
		Thus, by \eqref{cpt operator estimate}, \eqref{lemma estimate}, and \eqref{difference of symbol estimate}, for sufficiently large $m$, we get
		$$
		\begin{aligned}
			\|u\|_{L^{2}\left(\mathbb{T}\right)}\left\|T_{\sigma}-K\right\|_{*} & \geq\left\|\left(T_{\sigma}-K\right) u_{k_{m}}\right\|_{L^{2}\left(\mathbb{T}\right)} \\
			& \geq\left\|T_{\sigma} u_{k_{m}}\right\|_{L^{2}\left(\mathbb{T}\right)}-\left\|K u_{k_{m}}\right\|_{L^{2}\left(\mathbb{T}\right)} \\
			& \geq\left\|T_{\sigma} u_{k_{m}}\right\|_{L^{2}\left(\mathbb{T}\right)}-\epsilon\|u\|_{L^{2}\left(\mathbb{T}\right)} \\
			& \geq\left\|\sigma\left(\cdot, k_{m}\right) u_{k_{m}}\right\|_{L^{2}\left(\mathbb{T}\right)}-2 \epsilon\|u\|_{L^{2}\left(\mathbb{T}\right)} \\
			& \geq\left|\sigma\left(x_{k_{m}}, k_{m}\right)\right|\|u\|_{L^{2}\left(\mathbb{T}\right)}-3 \epsilon\|u\|_{L^{2}\left(\mathbb{T}\right)} \\
			&=\left(\left|\sigma\left(x_{k_{m}}, k_{m}\right)\right|-3 \epsilon\right)\|u\|_{L^{2}\left(\mathbb{T}\right)}.
		\end{aligned}
		$$
		Letting $m \rightarrow \infty$, we get
		$$
		\left\|T_{\sigma}-K\right\|_{*} \geq d-3 \epsilon.
		$$
		Finally, using the fact that $\epsilon$ is an arbitrary positive number, we have
		$$
		\left\|T_{\sigma}-K\right\|_{*} \geq d.
		$$
	\end{pff}
	\begin{proof}[Proof of Lemma \ref{L^2 convergence of sigma}] Let
		$$
		K(x, y)=\left(\mathcal{F}_{\mathbb{Z}} \sigma\right)(x, y), \quad x, y \in[-\pi, \pi].
		$$
		Then for $k=1,2, \ldots$,
		\begin{eqnarray}\label{pdo on defined sequence}
			\left(T_{\sigma} u_{k_{m}}\right)(x)=(2 \pi)^{-1} \int_{-\pi}^{\pi} e^{i k_{m}(x-y)} K(x, y) u_{x_{k_{m}}}(x-y) d y, \quad x \in[-\pi, \pi],
		\end{eqnarray}
		where $u_{x_{k_m}} = u (x-x_{k_m}).$ In the view of Example 2.4 in \cite{MR&VT&JW}, the function $q(x) = e^{-2 \pi i x} -1$ gives rise to a strongly admissible difference operator on $\mathbb{T}$ with the property that
		$$
		\Delta p(k) = \Delta_{q} p(k) = p(k+1) - p(k), \quad k \in \mathbb{Z}.
		$$ 
		Let $N$ be a positive integer. Then by the Taylor expansion formula (see \cite{MR&VT&JW})
		\begin{eqnarray}\label{taylor formula}
			u_{x_{k_{m}}}(x-y)=u_{x_{k_{m}}}(x)+\sum_{\alpha=1}^{N-1} \frac{1}{\alpha !} q^{\alpha}(y) \partial^{\alpha} u_{x_{k_{m}}}(x)+\mathcal{O}\left(h(y)^{N}\right),
		\end{eqnarray}
		where $h(x)$ is the geodesic distance from $x$ and the identity element of $\mathbb{T}$. Using \eqref{pdo on defined sequence} and \eqref{taylor formula}, we can write
		$$
		\begin{aligned}
			T_{\sigma} u_{k_{m}}(x)&=(2 \pi)^{-1}\int_{\mathbb{T}} e^{i k_{m}(x-y)} K(x,y) u_{x_{k_{m}}}(x) d y\\ &+(2 \pi)^{-1}\int_{\mathbb{T}} e^{i k_{m}(x-y)} K(x,y) \sum_{\alpha=1}^{N-1} \frac{1}{\alpha !} q^{\alpha}(y) \partial^{\alpha} u_{x_{k_{m}}}(x) d y \\
			&+(2 \pi)^{-1}\int_{\mathbb{T}} e^{i k_{m}(x-y)} K(x,y) \mathcal{O}\left(h(y)^{N}\right) d y .
		\end{aligned}
		$$
		Define
		$$
		\begin{aligned}
			&I_{1}:=\int_{\mathbb{T}} e^{i k_{m}(x-y)} K(x,y) u_{x_{k_{m}}}(x) d y, \\
			&I_{2}:=\int_{\mathbb{T}} e^{i k_{m}(x-y)} K(x,y) \sum_{\alpha=1}^{N-1} \frac{1}{\alpha !} q^{\alpha}(y) \partial^{\alpha} u_{x_{k_{m}}}(x) d y,
		\end{aligned}
		$$
		and
		$$
		I_{3}:=\int_{\mathbb{T}} e^{i k_{m}(x-y)} K(x,y) \mathcal{O}\left(h(y)^{N}\right) d y.
		$$
		So, we have
		$$
		\begin{aligned}
			I_{1} &=\int_{\mathbb{T}} e^{i k_{m}(x-y)} K(x,y) u_{x_{k_{m}}}(x) d y \\
			&=u_{k_{m}}(x)\sigma\left(x,k_{m} \right), \\
			I_{2} &=\int_{\mathbb{T}} e^{i k_{m}(x-y)} K(x,y) \sum_{\alpha=1}^{N-1} \frac{1}{\alpha !} q^{\alpha}(y)  \partial^{\alpha} u_{x_{k_{m}}}(x) d y \\
			&=\sum_{\alpha=1}^{N-1} \frac{1}{\alpha !} e^{i k_{m}x}  \partial^{\alpha} u_{x_{k_{m}}}(x) \int_{\mathbb{T}} e^{-i k_{m}y} K(x,y) q^{\alpha}(y) d y  \\
			&=\sum_{\alpha=1}^{N-1} \frac{1}{\alpha !} e^{i k_{m}x} \partial^{\alpha} u_{x_{k_{m}}}(x) \Delta_{q}^{\alpha} \sigma\left(x, k_{m}\right)\\
			&=\sum_{\alpha=1}^{N-1} \frac{1}{\alpha !} e^{i k_{m}x}  \partial^{\alpha} u_{x_{k_{m}}}(x) \Delta^{\alpha} \sigma\left(x, k_{m}\right),\\
			I_{3} &=\int_{\mathbb{T}} K(x,y) \mathcal{O}\left(h(y)^{N}\right) e^{i k_{m}(x-y)} d y\\
			&=\int_{\mathbb{T}} K(x,y) q^{N}(y) e^{i k_{m}(x-y)} d y \\
			&=e^{i k_{m}x}\Delta_{q}^{N} \sigma\left(x, k_{m}\right)\\
			&=e^{i k_{m}x}\Delta^{N} \sigma\left(x, k_{m}\right),
		\end{aligned}
		$$
		where $q^{N}(x):=\mathcal{O}\left(h(x)^{N}\right)$ vanishes at the identity element of $\mathbb{T}$. Hence,
		$$
		T_{\sigma} u_{k_{m}}(x)- u_{k_{m}}(x)\sigma\left(x,k_{m} \right) = \sum_{\alpha=1}^{N-1} \frac{1}{\alpha !} e^{i k_{m}x} \ \partial^{\alpha} u_{x_{k_{m}}}(x) \Delta^{\alpha} \sigma\left(x, k_{m}\right)+e^{i k_{m}x} \Delta^{N} \sigma\left(x, k_{m}\right) .
		$$
		Define $$T_{N}^{1}(x):=\sum_{\alpha=1}^{N-1} \frac{1}{\alpha !} e^{i k_{m}x} \partial^{\alpha} u_{x_{k_{m}}}(x) \Delta^{\alpha} \sigma\left(x, k_{m}\right),$$
		 and 
		 $$T_{N}^{2}(x):=e^{i k_{m}x} \Delta^{N} \sigma\left(x, k_{m}\right).$$ Now using the fact that $u_{x_{k_{m}}} \in C^{\infty}(\mathbb{T})$, we obtain
		$$
		\begin{aligned}
			|T_{N}^{1}(x)| & \leq \sum_{\alpha=1}^{N-1} C_{\alpha} |\Delta^{\alpha} \sigma\left(x, k_{m}\right)|\\ 
			& \leq \sum_{\alpha=1}^{N-1} C_{\alpha} \,\Lambda(k_m)^{-\rho \alpha}\\
			& \leq C \,\Lambda(k_m)^{-\rho}, \quad  x \in \mathbb{T},
		\end{aligned}
		$$
		where $C = \sum_{\alpha=1}^{N-1} C_{\alpha}.$ Clearly $|T_{N}^{1}(x)| \rightarrow 0$ uniformly on $\mathbb{T}$ as $m \rightarrow \infty$, and hence, $\|T_{N}^{1}\|_{L^{2}(\mathbb{T})} \rightarrow 0$ as $m \rightarrow \infty$.\\
		Similarly, $|T_{N}^{2}(x)|  \leq C_{0} \,\Lambda(k_m)^{-\rho\,N} \leq C_{0} \,\Lambda(k_m)^{-\rho}\rightarrow 0$ uniformly on $\mathbb{T}$ as $m \rightarrow \infty$, and hence, $\|T_{N}^{2}\|_{L^{2}(\mathbb{T})} \rightarrow 0$ as $m \rightarrow \infty$. This completes the proof of the theorem.
	\end{proof}
	Now we recall the definition of the Calkin algebra, which will be used to prove the main result of this section. Let $B\left(L^{2}\left(\mathbb{T}\right)\right)$ and $K\left(L^{2}\left(\mathbb{T}\right)\right)$ denotes the $C^{*}$-algebra of bounded linear operators on $L^{2}\left(\mathbb{T}\right)$ and the ideal of compact operators on $L^{2}\left(\mathbb{T}\right)$, respectively. The Calkin algebra $B\left(L^{2}\left(\mathbb{T}\right)\right) / K\left(L^{2}\left(\mathbb{T}\right)\right)$ is a *-algebra with respect to the product and the adjoint defined as follows:
	$$
	[A][B]=[A B]
	$$
	and
	$$
	[A]^{*}=\left[A^{*}\right]
	$$
	for all $A$ and $B$ in $B\left(L^{2}\left(\mathbb{T}\right)\right).$ Two elements $[A]$ and $[B]$ be in the Calkin algebra $B\left(L^{2}\left(\mathbb{T}\right)\right) / K\left(L^{2}\left(\mathbb{T}\right)\right)$ are equal if and only if $ A-B \in K\left(L^{2}\left(\mathbb{T}\right)\right)$.
	
	The norm $\|\cdot\|_{C}$ in $B\left(L^{2}\left(\mathbb{T}\right)\right) / K\left(L^{2}\left(\mathbb{T}\right)\right)$ is given by
	$$
	\|[A]\|_{C}=\inf _{K \in K\left(L^{2}\left(\mathbb{T}\right)\right)}\|A-K\|_{*}, \quad[A] \in B\left(L^{2}\left(\mathbb{T}\right)\right) / K\left(L^{2}\left(\mathbb{T}\right)\right) .
	$$
	It can be shown that $B\left(L^{2}\left(\mathbb{T}\right)\right) / K\left(L^{2}\left(\mathbb{T}\right)\right)$ is a $C^{*}$-algebra. Using the Calkin algebra, \eqref{gohberg inequality} in Gohberg's lemma is the same as
	$$
	\left\|\left[T_{\sigma}\right]\right\|_{C} \geq d .
	$$
	Now we are ready to prove our main theorem in this section.
	\begin{theorem}\label{characterization of cpt operator on T}
		Let $\sigma \in M_{\rho,\Lambda}^{0}\left(\mathbb{T} \times \mathbb{Z}\right)$. Then $T_{\sigma}$ is a compact operator on $L^{2}\left(\mathbb{T}\right)$ if and only if $d=0$, where
		$$
		d=\limsup _{|k| \rightarrow \infty}\left\{\sup _{x \in[-\pi, \pi]}|\sigma(x,k)|\right\} .
		$$
	\end{theorem}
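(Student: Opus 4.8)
The two directions have completely different natures, and only one of them requires work beyond what is already available. For the implication ``$T_\sigma$ compact $\Rightarrow d=0$'' I would simply feed the operator into Gohberg's lemma: since $T_\sigma$ is itself compact on $L^2(\mathbb{T})$, the choice $K=T_\sigma$ is admissible in \eqref{gohberg inequality} of Theorem \ref{gohberg lemma}, so $0=\|T_\sigma-T_\sigma\|_* \ge d$; because $d$ is the $\limsup$ as $|k|\to\infty$ of the nonnegative quantity $\sup_{x}|\sigma(x,k)|$, this forces $d=0$.

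For the converse assume $d=0$. Since each $\sup_{x\in[-\pi,\pi]}|\sigma(x,k)|$ is $\ge 0$ and the $\limsup$ vanishes, the ordinary limit exists and
$$
\lim_{|k|\to\infty}\Big\{\sup_{x\in[-\pi,\pi]}|\sigma(x,k)|\Big\}=0 .
$$
Hence $\sigma$ satisfies exactly the hypothesis of Proposition \ref{essential spectrum}, which yields $\Sigma_e(T_\sigma)=\{0\}$. At the same time Proposition \ref{essential normal} tells us that $T_\sigma\colon L^2(\mathbb{T})\to L^2(\mathbb{T})$ is essentially normal, so its class $[T_\sigma]$ in the Calkin algebra $B(L^2(\mathbb{T}))/K(L^2(\mathbb{T}))$ is a \emph{normal} element of a unital $C^*$-algebra.

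The last step is purely $C^*$-algebraic. The spectrum of $[T_\sigma]$ in the Calkin algebra is, by Atkinson's theorem, the set $\{\lambda\in\mathbb{C}: T_\sigma-\lambda I\ \text{is not Fredholm}\}$, which is the essential spectrum $\Sigma_w(T_\sigma)=\Sigma_e(T_\sigma)=\{0\}$. For a normal element of a $C^*$-algebra the norm equals the spectral radius, so $\|[T_\sigma]\|_C=\sup\{|\lambda|:\lambda\in\Sigma_e(T_\sigma)\}=0$. By the definition of $\|\cdot\|_C$ this means $\inf_{K\in K(L^2(\mathbb{T}))}\|T_\sigma-K\|_*=0$, and since $K(L^2(\mathbb{T}))$ is norm-closed in $B(L^2(\mathbb{T}))$ we conclude $T_\sigma\in K(L^2(\mathbb{T}))$, i.e. $T_\sigma$ is compact.

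Thus the only nontrivial inputs are Theorem \ref{gohberg lemma}, Proposition \ref{essential spectrum} and Proposition \ref{essential normal}, all already in hand; the point that needs care is the very last paragraph, namely being precise that ``essential spectrum'' is to be read as the spectrum of $[T_\sigma]$ in the Calkin algebra (equivalently the Wolf essential spectrum $\Sigma_w$) so that the spectral-radius formula for normal elements applies. A more computational alternative for the converse would be to show directly that $\|[T_\sigma]\|_C\le d$ by approximating $\sigma$ with symbols cut off to $\{|k|\le R\}$ — whose operators are finite rank, hence compact — and bounding the operator norm of the high-frequency remainder; this works but needs a Calder\'on--Vaillancourt-type estimate together with extra bookkeeping on the $x$-derivatives $\partial_x^\beta\sigma$, which the spectral argument above neatly sidesteps.
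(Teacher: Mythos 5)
Your proposal is correct and follows essentially the same route as the paper: the forward direction via $K=T_\sigma$ in Gohberg's lemma, and the converse via essential normality, Proposition \ref{essential spectrum}, Atkinson's theorem, and the spectral-radius-equals-norm identity for normal elements of the Calkin algebra. Your added remark that $d=0$ upgrades the $\limsup$ to a genuine limit (so that the hypothesis of Proposition \ref{essential spectrum} is literally met) is a small point the paper leaves implicit.
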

	\begin{pff}
		First, let us assume that $d=0$. Then $T_{\sigma}$ is compact if and only if $\left[T_{\sigma}\right]=0$ in $B\left(L^{2}\left(\mathbb{T}\right)\right) / K\left(L^{2}\left(\mathbb{T}\right)\right)$. By Proposition \ref{essential normal}, $T_{\sigma}$ is essentially normal on $L^{2}\left(\mathbb{T}\right)$, which implies that $\left[T_{\sigma}\right]$ is normal in the Calkin algebra $B\left(L^{2}\left(\mathbb{T}\right)\right) / K\left(L^{2}\left(\mathbb{T}\right)\right)$. Hence,
		$$
		r\left(\left[T_{\sigma}\right]\right)=\left\|\left[T_{\sigma}\right]\right\|_{C},
		$$
		where $r\left(\left[T_{\sigma}\right]\right)$ is the spectral radius of $\left[T_{\sigma}\right]$, and by Proposition \ref{essential spectrum}, we get $\Sigma_{e}\left(T_{\sigma}\right)=\{0\}$. Therefore, by Atkinson's theorem (see \cite{Atkinson}), the spectrum of $\left[T_{\sigma}\right]$ in the Calkin algebra $B\left(L^{2}\left(\mathbb{T}\right)\right) / K\left(L^{2}\left(\mathbb{T}\right)\right)$ is
		$$
		\Sigma\left(\left[T_{\sigma}\right]\right)=\{0\} .
		$$
		This implies that
		$$
		\left\|\left[T_{\sigma}\right]\right\|_{C}=r\left(\left[T_{\sigma}\right]\right)=0 .
		$$
		Hence, it follows that
		$$
		\left[T_{\sigma}\right]=0 .
		$$
		Therefore $T_{\sigma}$ is compact.\\
		Now, to prove the converse part, assume that $d \neq 0$, then we need to show that $T_{\sigma}$ is not compact on $L^{2}\left(\mathbb{T}\right)$. Suppose that $T_{\sigma}$ is compact, then putting $K=T_{\sigma}$ in \eqref{gohberg inequality} will contradict our assumption that $d \neq 0$. This completes the proof of the theorem.
	\end{pff}
	
	Now our aim is to study the Gohberg's lemma and characterization of compact operators on $\ell^{2}(\mathbb{ Z})$ with symbol in $M_{\rho,\Lambda}^{0}\left(\mathbb{Z} \times \mathbb{T}\right)$ class (defined in \cite{SSM&VK}). The main ingredient is the relation between the weighted periodic and discrete symbols which can be found in \cite{SSM&VK}.
	\begin{theorem}\label{relation between tau and sigma}
		Let $\sigma: \mathbb{Z} \times \mathbb{T} \rightarrow \mathbb{C}$ be a measurable function such that the pseudo-differential operator $T_{\sigma}: \ell^{2}\left(\mathbb{Z}\right) \rightarrow \ell^{2}\left(\mathbb{Z}\right)$ is a bounded linear operator. If we define $\tau: \mathbb{T}\times \mathbb{Z} \rightarrow \mathbb{C}$ by
		$$
		\tau(x, k)=\overline{\sigma(-k, x)}, \quad  x \in \mathbb{T}, k \in \mathbb{Z},	
		$$
		then 
		\begin{eqnarray}\label{sigma in term of tau}
			T_{\sigma}=\mathcal{F}_{\mathbb{Z}}^{-1} T_{\tau}^{*} \mathcal{F}_{\mathbb{Z}},
		\end{eqnarray}
		where $T_{\tau}^{*}$ is the adjoint of $T_{\tau}$. We also have
		\begin{eqnarray}\label{tau in term of sigma}
			T_{\tau}=\mathcal{F}_{\mathbb{Z}} T_{\sigma}^{*} \mathcal{F}_{\mathbb{Z}}^{-1},
		\end{eqnarray}
		where $T_{\sigma}^{*}$ is the adjoint of $T_{\sigma}$.
	\end{theorem}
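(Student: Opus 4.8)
The plan is to establish one of the two displayed identities and deduce the other by duality, exploiting that $\mathcal{F}_{\mathbb{Z}}\colon \ell^{2}(\mathbb{Z})\to L^{2}(\mathbb{T})$ is unitary, so that $\mathcal{F}_{\mathbb{Z}}^{*}=\mathcal{F}_{\mathbb{Z}}^{-1}$. If \eqref{sigma in term of tau} holds, then taking adjoints and using unitarity gives $T_{\sigma}^{*}=\mathcal{F}_{\mathbb{Z}}^{-1}T_{\tau}\mathcal{F}_{\mathbb{Z}}$, which is \eqref{tau in term of sigma}; the converse implication is identical. Thus it suffices to prove, say, $T_{\tau}=\mathcal{F}_{\mathbb{Z}}T_{\sigma}^{*}\mathcal{F}_{\mathbb{Z}}^{-1}$. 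Since $T_{\sigma}$ is bounded on $\ell^{2}(\mathbb{Z})$, so is $T_{\sigma}^{*}$, and hence $A:=\mathcal{F}_{\mathbb{Z}}T_{\sigma}^{*}\mathcal{F}_{\mathbb{Z}}^{-1}$ is a bounded operator on $L^{2}(\mathbb{T})$; it is therefore enough to verify that $A$ and $T_{\tau}$ have the same matrix coefficients $\langle\,\cdot\,e_{k},e_{l}\rangle_{L^{2}(\mathbb{T})}$ against the trigonometric system $\{e_{k}\}_{k\in\mathbb{Z}}$, which is total in $L^{2}(\mathbb{T})$. This is also the correct way to read the statement, because the symbol $\tau(x,k)=\overline{\sigma(-k,x)}$ is a priori only measurable: $T_{\tau}$ is the bounded operator whose matrix in the exponential basis is prescribed by the computation below.

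First I would compute the matrix of $T_{\sigma}$ in the standard orthonormal basis $\{\delta_{n}\}_{n\in\mathbb{Z}}$ of $\ell^{2}(\mathbb{Z})$. Feeding $\delta_{m}$ into the lattice quantization and using that $\mathcal{F}_{\mathbb{Z}}\delta_{m}$ is a pure character on $\mathbb{T}$, the pairing $\langle T_{\sigma}\delta_{m},\delta_{n}\rangle_{\ell^{2}(\mathbb{Z})}$ collapses to a single Fourier coefficient (in the torus variable) of $\sigma(n,\cdot)$; consequently $\langle T_{\sigma}^{*}\delta_{m},\delta_{n}\rangle=\overline{\langle T_{\sigma}\delta_{n},\delta_{m}\rangle}$ is the complex conjugate of a Fourier coefficient of $\sigma(m,\cdot)$. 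Next I would compute, directly from the toroidal quantization \eqref{pseudo-defi}, the matrix of $T_{\tau}$ in the exponential basis of $L^{2}(\mathbb{T})$; here $\langle T_{\tau}e_{k},e_{l}\rangle_{L^{2}(\mathbb{T})}$ is likewise a Fourier coefficient of $\tau(\cdot,k)$. Finally, since $\mathcal{F}_{\mathbb{Z}}$ is unitary and sends $\delta_{m}$ to a character $e_{\pm m}$, the matrix of $A=\mathcal{F}_{\mathbb{Z}}T_{\sigma}^{*}\mathcal{F}_{\mathbb{Z}}^{-1}$ in the exponential basis is obtained from the matrix of $T_{\sigma}^{*}$ by the relabelling $m\mapsto\pm m$; substituting $\tau(x,k)=\overline{\sigma(-k,x)}$ into the formula for $T_{\tau}$ and using the elementary identity $\widehat{\bar h}(j)=\overline{\widehat{h}(-j)}$ for torus Fourier coefficients, one checks that the two matrices coincide. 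This yields \eqref{tau in term of sigma}, and \eqref{sigma in term of tau} follows by taking adjoints.

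The only delicate point is the bookkeeping: one must fix, once and for all, consistent sign conventions for the Fourier transforms on $\mathbb{Z}$ and on $\mathbb{T}$ and for the two quantizations, and then track how they interact with the conjugation and the reflection $k\mapsto -k$ introduced by the adjoint. The reflection in the definition $\tau(x,k)=\overline{\sigma(-k,x)}$ is exactly what is needed to absorb the sign flip produced by $\mathcal{F}_{\mathbb{Z}}$; once the conventions are pinned down, each of the three steps above is a one-line computation. The argument runs parallel to the proof of the analogous periodic--discrete correspondence in \cite{SSM&VK}, which in turn rests on the lattice--toroidal dictionary of \cite{AB&GK&MK}.
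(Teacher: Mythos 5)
Your overall strategy is sound, and it is worth noting that the paper itself gives no proof of this theorem to compare against: it is imported verbatim from \cite{SSM&VK}, which in turn takes it from the unweighted lattice--toroidal dictionary of \cite{AB&GK&MK}; there the argument is a direct unwinding of $\langle T_{\tau}^{*}\mathcal{F}_{\mathbb{Z}}g,\mathcal{F}_{\mathbb{Z}}h\rangle_{L^{2}(\mathbb{T})}=\langle \mathcal{F}_{\mathbb{Z}}g,T_{\tau}\mathcal{F}_{\mathbb{Z}}h\rangle_{L^{2}(\mathbb{T})}$ for $g,h\in\ell^{2}(\mathbb{Z})$, which is essentially your matrix-coefficient computation in the bases $\{\delta_{m}\}$ and $\{e_{k}\}$ written without naming the bases. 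Your two structural reductions are correct: since $\mathcal{F}_{\mathbb{Z}}$ is unitary, taking adjoints shows that \eqref{sigma in term of tau} and \eqref{tau in term of sigma} are equivalent, and two bounded operators on $L^{2}(\mathbb{T})$ coincide once their matrix coefficients against the total system $\{e_{k}\}$ agree. The one substantive weakness is that the decisive step --- ``one checks that the two matrices coincide'' --- is exactly where the content of the theorem lives: it is only there that one sees why the reflection $k\mapsto-k$ and the complex conjugation in $\tau(x,k)=\overline{\sigma(-k,x)}$ are forced, via the identity $\widehat{\overline{h}}(j)=\overline{\widehat{h}(-j)}$ combined with the sign carried by $\mathcal{F}_{\mathbb{Z}}\delta_{m}=e_{\pm m}$. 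A complete write-up must actually perform that computation after fixing conventions, and this is not a purely cosmetic concern here: the paper's own conventions are internally inconsistent (the forward transform on $\mathbb{T}$ uses $e^{2\pi ikx}$ while \eqref{pseudo-defi} uses $e^{ik(x-y)}$ with a prefactor $(2\pi)^{-1}$), so the verification cannot be left to the reader as a one-liner. With the conventions of \cite{AB&GK&MK} the bookkeeping does close up exactly as you predict, so the proposal is a correct outline rather than a finished proof.
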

As a corollary of Theorem \ref{gohberg lemma} and Theorem \ref{relation between tau and sigma}, we obtain the following estimates for the distance between a given operator and
the space of compact operators on $ \ell^2(\mathbb{Z}).$
	\begin{corollary}\label{gohberg lemma on Z}
		Let $\sigma \in M_{\rho,\Lambda}^{0}\left(\mathbb{Z} \times \mathbb{T}\right)$. Then for all compact operators $K$ on $\ell^{2}\left(\mathbb{Z}\right)$,
		\begin{eqnarray}\label{gohberg inequality on Z}
			\left\|T_{\sigma}-K\right\|_{**} \geq d,
		\end{eqnarray}
		where
		$$
		d=\limsup _{|k| \rightarrow \infty}\left\{\sup _{x \in[-\pi, \pi]}|\sigma(k,x)|\right\} .
		$$
		Here $\|\cdot\|_{**}$ denotes the norm in the $C^{*}$-algebra of all bounded linear operators on $\ell^2(\mathbb{Z})$.
	\end{corollary}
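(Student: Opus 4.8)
The plan is to transfer Theorem~\ref{gohberg lemma} from $L^{2}(\mathbb{T})$ to $\ell^{2}(\mathbb{Z})$ along the unitary $\mathcal{F}_{\mathbb{Z}}\colon\ell^{2}(\mathbb{Z})\to L^{2}(\mathbb{T})$, using the periodic/discrete symbol dictionary of Theorem~\ref{relation between tau and sigma}. Given $\sigma\in M_{\rho,\Lambda}^{0}(\mathbb{Z}\times\mathbb{T})$, the operator $T_{\sigma}$ is bounded on $\ell^{2}(\mathbb{Z})$ (the $\mathbb{Z}$-analogue of Theorem~\ref{bdd thm}, from \cite{SSM&VK}), so Theorem~\ref{relation between tau and sigma} applies with $\tau(x,k):=\overline{\sigma(-k,x)}$, and yields the operator identity $\mathcal{F}_{\mathbb{Z}}\,T_{\sigma}\,\mathcal{F}_{\mathbb{Z}}^{-1}=T_{\tau}^{*}$ on $L^{2}(\mathbb{T})$.

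Before invoking Theorem~\ref{gohberg lemma} for $T_{\tau}$, I would check that $\tau\in M_{\rho,\Lambda}^{0}(\mathbb{T}\times\mathbb{Z})$. Under the substitution $(k,x)\mapsto(-k,x)$ the $x$-derivatives are unaffected, the lattice forward differences $\Delta_{k}$ turn into reflected differences in $k$, complex conjugation does not change absolute values, and by Definition~\ref{weight fun. defi} the weight $\Lambda$ is comparable at $k$ and $-k$; hence the estimates defining $M_{\rho,\Lambda}^{0}(\mathbb{Z}\times\mathbb{T})$ for $\sigma$ become exactly the estimates defining $M_{\rho,\Lambda}^{0}(\mathbb{T}\times\mathbb{Z})$ for $\tau$. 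This is the symbol bookkeeping already recorded in \cite{SSM&VK}, and it is the only non-formal step.

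Now fix a compact operator $K$ on $\ell^{2}(\mathbb{Z})$. Since $\mathcal{F}_{\mathbb{Z}}$ is unitary (Plancherel, Section~\ref{sec2}), conjugation by $\mathcal{F}_{\mathbb{Z}}$ is an isometric $*$-isomorphism of $C^{*}$-algebras that maps compact operators to compact operators; thus $K_{1}:=\mathcal{F}_{\mathbb{Z}}\,K\,\mathcal{F}_{\mathbb{Z}}^{-1}$ and $K_{0}:=K_{1}^{*}$ are compact on $L^{2}(\mathbb{T})$. Using $\mathcal{F}_{\mathbb{Z}}T_{\sigma}\mathcal{F}_{\mathbb{Z}}^{-1}=T_{\tau}^{*}$ together with invariance of the operator norm under the $*$-isomorphism and under taking adjoints,
$$
\|T_{\sigma}-K\|_{**}=\|\mathcal{F}_{\mathbb{Z}}(T_{\sigma}-K)\mathcal{F}_{\mathbb{Z}}^{-1}\|_{*}=\|T_{\tau}^{*}-K_{1}\|_{*}=\|T_{\tau}-K_{0}\|_{*}.
$$
By Theorem~\ref{gohberg lemma} applied to $\tau$, $\|T_{\tau}-K_{0}\|_{*}\ge d_{\tau}$ with $d_{\tau}=\limsup_{|k|\to\infty}\{\sup_{x\in[-\pi,\pi]}|\tau(x,k)|\}$. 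Since $\sup_{x}|\tau(x,k)|=\sup_{x}|\sigma(-k,x)|$ and $|k|\to\infty$ is invariant under $k\mapsto -k$, we get $d_{\tau}=\limsup_{|k|\to\infty}\{\sup_{x\in[-\pi,\pi]}|\sigma(k,x)|\}=d$. Combining gives $\|T_{\sigma}-K\|_{**}\ge d$, i.e.\ \eqref{gohberg inequality on Z}.

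The main obstacle, and essentially the only one, is verifying that the reflection-and-conjugation $\sigma\mapsto\tau$ preserves the weighted class $M_{\rho,\Lambda}^{0}$; once that is in hand the corollary follows formally from the unitary equivalence $T_{\sigma}\sim T_{\tau}^{*}$ and the periodic Gohberg lemma.
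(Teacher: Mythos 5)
Your proposal is correct and follows essentially the same route as the paper: define $\tau(x,k)=\overline{\sigma(-k,x)}$, verify it lies in $M_{\rho,\Lambda}^{0}(\mathbb{T}\times\mathbb{Z})$, transfer the compact operator through the unitary $\mathcal{F}_{\mathbb{Z}}$ and an adjoint, and invoke the periodic Gohberg lemma (Theorem~\ref{gohberg lemma}). The only cosmetic difference is that you work from the identity $\mathcal{F}_{\mathbb{Z}}T_{\sigma}\mathcal{F}_{\mathbb{Z}}^{-1}=T_{\tau}^{*}$ of \eqref{sigma in term of tau} while the paper uses the equivalent \eqref{tau in term of sigma}, and you spell out the check $d_{\tau}=d$, which the paper leaves implicit.
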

	\begin{pff}
		We define $\tau: \mathbb{T}\times \mathbb{Z} \rightarrow \mathbb{C}$ by
		$$	\tau(x, k)=\overline{\sigma(-k, x)}, \quad  x \in \mathbb{T}, k \in \mathbb{Z}.$$
		Since $\sigma \in M_{\rho,\Lambda}^{0}\left(\mathbb{Z} \times \mathbb{T}\right)$, we have $\tau \in M_{\rho,\Lambda}^{0}\left(\mathbb{T} \times \mathbb{Z}\right)$. Then, by Theorem \ref{gohberg lemma}, we have the following estimate for all compact operators $K^{\prime}$ on $L^{2}\left(\mathbb{T}\right):$
		\begin{eqnarray}\label{gohberg inequality in the T case which used in Z case}
			\left\|T_{\tau}-K^{\prime}\right\|_{*} \geq d.
		\end{eqnarray}
		We need to show that 
		\begin{eqnarray}\label{gohberg inequality need to prove in Z case}
			\left\|T_{\sigma}-K\right\|_{**} \geq d,
		\end{eqnarray}
		for all compact operators $K$ on $\ell^{2}\left(\mathbb{Z}\right).$ Let $K : \ell^{2}\left(\mathbb{Z}\right) \rightarrow \ell^{2}\left(\mathbb{Z}\right)$ be any arbitrary compact operator. This implies that $K^{*} : \ell^{2}\left(\mathbb{Z}\right) \rightarrow \ell^{2}\left(\mathbb{Z}\right)$ is also a compact operator. Then $K_{1} = \mathcal{F}_{\mathbb{Z}} K^{*} \mathcal{F}_{\mathbb{Z}}^{-1} : L^2(\mathbb{T}) \rightarrow L^2(\mathbb{T})$ is a compact operator. Hence, by \eqref{tau in term of sigma} and \eqref{gohberg inequality in the T case which used in Z case}, we have
		$$
		\begin{aligned}
			& \hspace{28.1pt}\left\|T_{\tau}-K_{1}\right\|_{L^2(\mathbb{T})} \geq d,\\
			& \implies \left\| \mathcal{F}_{\mathbb{Z}} T_{\sigma}^{*} \mathcal{F}_{\mathbb{Z}}^{-1}- \mathcal{F}_{\mathbb{Z}} K^{*} \mathcal{F}_{\mathbb{Z}}^{-1}\right\|_{L^2(\mathbb{T})} \geq d,\\
			&\implies \left\|\mathcal{F}_{\mathbb{Z}} (T_{\sigma}- K)^{*} \mathcal{F}_{\mathbb{Z}}^{-1}\right\|_{L^2(\mathbb{T})} \geq d,\\
			&\implies \left\|(T_{\sigma}-K)^{*}\right\|_{\ell^2(\mathbb{Z})} \geq d,\\
			&\implies \left\|T_{\sigma}-K\right\|_{\ell^2(\mathbb{Z})} \geq d,
		\end{aligned}
		$$
		and this completes the proof of the estimate \eqref{gohberg inequality need to prove in Z case}.
	\end{pff}

The following corollary gives us a necessary and sufficient condition for an operator to be compact on $ \ell^2(\mathbb{Z})$ for symbol class $M_{\rho,\Lambda}^{0}\left(\mathbb{Z} \times \mathbb{T}\right).$
	\begin{corollary}\label{characterization of cpt operator on Z}
		Let $\sigma \in M_{\rho,\Lambda}^{0}\left(\mathbb{Z} \times \mathbb{T}\right)$. Then $T_{\sigma}$ is a compact operator on $\ell^{2}\left(\mathbb{Z}\right)$ if and only if $d=0$, where
		$$
		d=\limsup _{|k| \rightarrow \infty}\left\{\sup _{x \in[-\pi, \pi]}|\sigma(k,x)|\right\} .
		$$
	\end{corollary}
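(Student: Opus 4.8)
The plan is to deduce this characterization on $\ell^2(\mathbb{Z})$ directly from the corresponding result on the torus, Theorem \ref{characterization of cpt operator on T}, exactly as Corollary \ref{gohberg lemma on Z} was deduced from Theorem \ref{gohberg lemma}. First I would set $\tau(x,k):=\overline{\sigma(-k,x)}$ for $x\in\mathbb{T}$, $k\in\mathbb{Z}$, and invoke the (already stated) fact that $\sigma\in M_{\rho,\Lambda}^{0}(\mathbb{Z}\times\mathbb{T})$ implies $\tau\in M_{\rho,\Lambda}^{0}(\mathbb{T}\times\mathbb{Z})$; by Theorem \ref{relation between tau and sigma} we have $T_{\tau}=\mathcal{F}_{\mathbb{Z}}\,T_{\sigma}^{*}\,\mathcal{F}_{\mathbb{Z}}^{-1}$ and $T_{\sigma}=\mathcal{F}_{\mathbb{Z}}^{-1}\,T_{\tau}^{*}\,\mathcal{F}_{\mathbb{Z}}$. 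The key observation is that the two quantities $d$ coincide: the quantity governing $T_{\tau}$ is $\limsup_{|k|\to\infty}\sup_{x}|\tau(x,k)|=\limsup_{|k|\to\infty}\sup_{x}|\overline{\sigma(-k,x)}|=\limsup_{|k|\to\infty}\sup_{x}|\sigma(k,x)|$, which is precisely the $d$ appearing in the statement.

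Next I would handle the forward direction. Suppose $T_{\sigma}$ is compact on $\ell^2(\mathbb{Z})$. Then $T_{\sigma}^{*}$ is compact, and since $\mathcal{F}_{\mathbb{Z}}:\ell^2(\mathbb{Z})\to L^2(\mathbb{T})$ is unitary, $T_{\tau}=\mathcal{F}_{\mathbb{Z}}T_{\sigma}^{*}\mathcal{F}_{\mathbb{Z}}^{-1}$ is a compact operator on $L^2(\mathbb{T})$. By Theorem \ref{characterization of cpt operator on T} applied to $\tau$, the associated limsup must vanish, i.e.\ $d=0$. For the converse, suppose $d=0$. Then the limsup attached to $\tau$ is $0$, so Theorem \ref{characterization of cpt operator on T} gives that $T_{\tau}$ is compact on $L^2(\mathbb{T})$; hence $T_{\tau}^{*}$ is compact, and therefore $T_{\sigma}=\mathcal{F}_{\mathbb{Z}}^{-1}T_{\tau}^{*}\mathcal{F}_{\mathbb{Z}}$ is compact on $\ell^2(\mathbb{Z})$, being unitarily equivalent to a compact operator. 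This closes the equivalence.

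There is essentially no serious obstacle here: the entire argument is a transfer of Theorem \ref{characterization of cpt operator on T} through the unitary intertwining of Theorem \ref{relation between tau and sigma}, together with the elementary facts that compactness is preserved under taking adjoints and under unitary conjugation. The only points requiring a line of care are (i) verifying that the symbol map $\sigma\mapsto\tau$ really lands in $M_{\rho,\Lambda}^{0}(\mathbb{T}\times\mathbb{Z})$ — but this is already asserted in the proof of Corollary \ref{gohberg lemma on Z} and follows from the definition of $M_{\rho,\Lambda}^{0}(\mathbb{Z}\times\mathbb{T})$ by relabeling variables and noting that complex conjugation does not affect absolute values or difference/derivative estimates — and (ii) the identification of the two constants $d$, which is immediate. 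Alternatively, one could give a self-contained proof mirroring Theorem \ref{characterization of cpt operator on T}: use Corollary \ref{gohberg lemma on Z} for the ``$d\neq0\Rightarrow$ not compact'' direction (put $K=T_{\sigma}$), and for the ``$d=0\Rightarrow$ compact'' direction establish the $\ell^2(\mathbb{Z})$ analogues of Propositions \ref{essential spectrum} and \ref{essential normal} and run the Calkin-algebra/Atkinson argument; but the transfer proof above is shorter and I would present that.
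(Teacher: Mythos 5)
Your proposal is correct and follows essentially the same route as the paper: define $\tau(x,k)=\overline{\sigma(-k,x)}$ and transfer the torus characterization (Theorem \ref{characterization of cpt operator on T}) through the unitary intertwining of Theorem \ref{relation between tau and sigma}. The only cosmetic difference is in the direction ``compact $\Rightarrow d=0$'': the paper applies Corollary \ref{gohberg lemma on Z} directly with $K=T_{\sigma}$, while you pass back to the torus and use the ``only if'' part of Theorem \ref{characterization of cpt operator on T}; these are the same ingredients composed in a different order, and you explicitly note the paper's variant as an alternative.
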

	\begin{pff}
		First, let us assume that $d = 0.$ Define
		$$	\tau(x, k)=\overline{\sigma(-k, x)}, \quad  x \in \mathbb{T}, k \in \mathbb{Z}.$$
		Since $\sigma \in M_{\rho,\Lambda}^{0}\left(\mathbb{Z} \times \mathbb{T}\right)$, we have $\tau \in M_{\rho,\Lambda}^{0}\left(\mathbb{T} \times \mathbb{Z}\right)$. Hence,  by Theorem \ref{characterization of cpt operator on T}, $T_{\sigma}$ is compact on $L^{2}\left(\mathbb{T}\right)$. This implies that $T_{\sigma}^{*}$ is a compact operator on $L^{2}\left(\mathbb{T}\right)$. Hence, by \eqref{sigma in term of tau}, $T_{\sigma}$ is a compact operator on $\ell^{2}\left(\mathbb{Z}\right)$.\\ Now, to prove the converse part, assume that $d \neq 0$, then we need to show that $T_{\sigma}$ is not compact on $\ell^{2}\left(\mathbb{Z}\right)$. Suppose that $T_{\sigma}$ is compact, then putting $K=T_{\sigma}$ in \eqref{gohberg inequality on Z} will contradict our assumption that $d \neq 0$. This completes the proof of the theorem.
	\end{pff}
	\section{G\r{a}rding's and sharp G\r{a}rding's inequalities on  $\mathbb{T}$  and $\mathbb{Z}$}\label{sec5}
	The main aim of this section is to prove G\r{a}rding's and sharp  G\r{a}rding's  inequalities for $M$-elliptic operators on $\mathbb{T}$ and  $\mathbb{Z}$, respectively.     First, we   state the G\r{a}rding's inequality for $M$-elliptic operators on $\mathbb{T}$, which is analogous to the \cite[Corollary 5.7]{AB&GK&MK}. The proof can be done in similar lines, so we skip the proof here.
	\begin{theorem} \textbf{(G\r{a}rding's inequality for $M$-elliptic operators on $\mathbb{T}$)}\label{Gardings theorem on T}\\ 
		Let $m>0$. Let $\sigma \in  M_{\rho, \Lambda}^{2 m}\left(\mathbb{T}\times \mathbb{Z}\right)$ be elliptic such that $\sigma(x, k) \geq 0$ for all $x$ and co-finitely many $k$. Then there exist $C_{0}, C_{1}>0$ such that for all $f \in H_{\Lambda}^{m,2}\left(\mathbb{T}\right)$, we have
		\begin{align}\label{garding}
			\operatorname{Re}(T_{\sigma} f, f)_{L^{2}\left(\mathbb{T}\right)} \geq C_{0}\|f\|_{H_{\Lambda}^{m,2}\left(\mathbb{T}\right)}^{2}-C_{1}\|f\|_{L^{2}\left(\mathbb{T}\right)}^{2}.
		\end{align}
	\end{theorem}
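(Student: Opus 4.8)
The plan is to reduce the inequality to an $L^2$-level statement by peeling off a half-order of smoothing from each side, and then invoke the sharp Gårding inequality together with the symbolic calculus developed in Section \ref{sec3}. Concretely, I would first use the parametrix construction of Theorem \ref{parametrix thm} (or more simply Lemma \ref{inverse of symbol class}) to build an operator $T_{\tau}$ with $\tau \in M_{\rho,\Lambda}^{m}(\mathbb{T}\times\mathbb{Z})$ which is $M$-elliptic of order $m$ and whose symbol is, up to a compactly supported (in $k$) correction, a fixed square root of $\sigma$; i.e.\ $\tau(x,k)^2 = \sigma(x,k)$ for $|k|$ large, using that $\sigma(x,k)\ge 0$ and $\sigma(x,k)\ge C\Lambda(k)^{2m}$ there, so $\tau(x,k) := \sqrt{\sigma(x,k)}\,\psi(k)$ is a legitimate element of $M_{\rho,\Lambda}^{m}(\mathbb{T}\times\mathbb{Z})$ (the weight estimate \eqref{weight estimate} and the $M$-class definition survive taking square roots of an elliptic nonnegative symbol, just as in the $\mathbb{R}^n$ theory).

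Next I would compute $T_{\tau}^{*}T_{\tau}$ using Theorem \ref{adjoint} and Theorem \ref{product}: its symbol lies in $M_{\rho,\Lambda}^{2m}(\mathbb{T}\times\mathbb{Z})$ and is equal to $|\tau|^2 + r = \sigma + r'$ modulo $M_{\rho,\Lambda}^{2m-\rho}(\mathbb{T}\times\mathbb{Z})$, where $r'$ accounts for both the lower-order terms in the composition formula and the region $|k|$ small. Hence $T_{\sigma} = T_{\tau}^{*}T_{\tau} + T_{\theta}$ with $\theta \in M_{\rho,\Lambda}^{2m-\rho}(\mathbb{T}\times\mathbb{Z})$. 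For the main term,
\begin{align}\label{pp-maineq}
\operatorname{Re}(T_{\tau}^{*}T_{\tau} f, f)_{L^2(\mathbb{T})} = \|T_{\tau} f\|_{L^2(\mathbb{T})}^2 \ge 0,
\end{align}
and since $\tau$ is $M$-elliptic of order $m$, the parametrix gives $\|f\|_{H_{\Lambda}^{m,2}(\mathbb{T})} \le C\|T_{\tau} f\|_{L^2(\mathbb{T})} + C\|f\|_{H_{\Lambda}^{m-1,2}(\mathbb{T})}$ via Theorem \ref{bdd thm} applied to the parametrix operator; absorbing the lower-order Sobolev norm by interpolation (or directly via the Bessel potentials $J_s$ and Theorem \ref{cpt thm}) yields $\|T_{\tau} f\|_{L^2(\mathbb{T})}^2 \ge C_0\|f\|_{H_{\Lambda}^{m,2}(\mathbb{T})}^2 - C_1'\|f\|_{L^2(\mathbb{T})}^2$.

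For the remainder I would estimate $|(T_{\theta} f, f)_{L^2(\mathbb{T})}|$: since $\theta \in M_{\rho,\Lambda}^{2m-\rho}(\mathbb{T}\times\mathbb{Z}) \subset S_{\rho,\Lambda}^{2m-\rho}(\mathbb{T}\times\mathbb{Z})$, Theorem \ref{bdd thm} gives $T_{\theta}: H_{\Lambda}^{m-\rho/2,2} \to H_{\Lambda}^{-(m-\rho/2),2}$ boundedly, so $|(T_{\theta}f,f)| \le C\|f\|_{H_{\Lambda}^{m-\rho/2,2}(\mathbb{T})}^2$; then the standard interpolation/Peter--Paul inequality bounds $\|f\|_{H_{\Lambda}^{m-\rho/2,2}}^2 \le \varepsilon\|f\|_{H_{\Lambda}^{m,2}}^2 + C_{\varepsilon}\|f\|_{L^2}^2$, and choosing $\varepsilon$ small compared to $C_0$ lets me absorb this term into the main one. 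Combining everything gives \eqref{garding} with adjusted constants. The main obstacle, as the statement itself signals by deferring to \cite[Corollary 5.7]{AB&GK&MK}, is establishing that the square-root symbol $\tau$ genuinely belongs to the weighted $M$-class — one must check that dividing the elliptic bound $\sigma \ge C\Lambda^{2m}$ into the difference estimates on $\sqrt{\sigma}$ reproduces exactly the $\Lambda(k)^{m-\rho\alpha}$ decay with the extra $k^{\gamma}\Delta_k^{\gamma}$ stability required by Definition \ref{M class defi}, which relies essentially on the weight hypothesis \eqref{weight estimate}; once that is in place, the rest is bookkeeping with the calculus of Section \ref{sec3}.
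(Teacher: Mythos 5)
The paper offers no proof of Theorem \ref{Gardings theorem on T} at all --- it is stated with a pointer to \cite[Corollary 5.7]{AB&GK&MK} and the remark that ``the proof can be done in similar lines'' --- so there is no argument in the text to compare yours against line by line. That said, your square-root (Friedrichs symmetrization) argument is the standard route behind the cited result, and its skeleton is sound: writing $T_{\sigma}=T_{\tau}^{*}T_{\tau}+T_{\theta}$ with $\tau=\psi\sqrt{\sigma}\in M_{\rho,\Lambda}^{m}$ and $\theta\in M_{\rho,\Lambda}^{2m-\rho}$ via Theorems \ref{adjoint} and \ref{product}, bounding $\|T_{\tau}f\|_{L^{2}}^{2}$ from below by $C_{0}\|f\|_{H_{\Lambda}^{m,2}}^{2}-C_{1}\|f\|_{L^{2}}^{2}$ through the parametrix of the $M$-elliptic symbol $\tau$ and Theorem \ref{bdd thm}, and absorbing $(T_{\theta}f,f)$ by duality plus the Peter--Paul inequality all go through with the calculus of Section \ref{sec3}. (Two small remarks: your opening sentence announces an appeal to sharp G\aa rding, but your actual argument never uses it --- the square root makes it unnecessary; and note that the hypotheses $\sigma\ge 0$ co-finitely plus $|\sigma|\ge C\Lambda^{2m}$ do combine to give $\sigma\ge C\Lambda(k)^{2m}$ for $|k|$ large, which is what legitimizes the real square root.)

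The one place where your proposal is genuinely incomplete is the step you yourself flag: the claim that $\tau=\psi\sqrt{\sigma}$ belongs to $M_{\rho,\Lambda}^{m}(\mathbb{T}\times\mathbb{Z})$ and not merely to $S_{\rho,\Lambda}^{m}(\mathbb{T}\times\mathbb{Z})$. This is not bookkeeping; it is where the entire weighted $M$-class structure enters, and in the discrete setting the chain rule is replaced by identities such as $\Delta_{k}\sqrt{\sigma}=\Delta_{k}\sigma/\bigl(\sqrt{\sigma(\cdot,k+1)}+\sqrt{\sigma(\cdot,k)}\bigr)$, whose higher-order iterates must be controlled by induction using the two-sided bound $\sigma\asymp\Lambda^{2m}$ together with \eqref{weight estimate}, and one must additionally verify the extra condition $k\Delta_{k}\tau\in S_{\rho,\Lambda}^{m}$ of Definition \ref{M class defi}. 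Since everything else in your argument is routine once this lemma is available, a complete proof would have to carry out these difference estimates explicitly (or cite a precise analogue, e.g.\ the square-root lemma for the classes $M_{\rho,\Lambda}^{m}$ on $\mathbb{R}^{n}$ in \cite{GM,wong06}, adapted to differences); asserting that the $\mathbb{R}^{n}$ argument ``survives'' is the assertion to be proved, not a proof of it.
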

	
	Now, we will show that Theorem \ref{Gardings theorem on T} implies the corresponding Gårding inequality for $M$-elliptic operators on $\mathbb{Z}$. As there is no regularity concept on the lattice, the statement is given in terms of weighted $\ell^{2}$-spaces. For this, we need the following definition:
	\begin{defi}
		For $s \in \mathbb{R}$, let us define the weighted space $\ell_{s,\Lambda}^{2}\left(\mathbb{Z}\right)$ as the space of all $f: \mathbb{Z}\rightarrow \mathbb{C}$ such that
		$$
		\|f\|_{\ell_{s,\Lambda}^{2}\left(\mathbb{Z}\right)}:=\left(\sum_{k \in \mathbb{Z}} \Lambda(k)^{2 s}|f(k)|^{2}\right)^{1 / 2}<\infty .
		$$
		We observe that the symbol $\sigma_{s}(k)=\Lambda(k)^{s}$ belongs to $M_{\rho,\Lambda}^{s}\left(\mathbb{Z} \times \mathbb{T}\right)$, and $f \in \ell_{s,\Lambda}^{2}\left(\mathbb{Z}\right)$ if and only if $T_{\sigma_{s}} f \in \ell^{2}\left(\mathbb{Z}\right)$.
	\end{defi}
	\begin{theorem} \textbf{(G\r{a}rding's inequality for $M$-elliptic operators on $\mathbb{Z}$)}\label{Gardings theorem on Z}\\ 
		Let $m>0$. Let $\sigma \in  M_{\rho, \Lambda}^{2 m}\left(\mathbb{Z}\times \mathbb{T}\right)$ be elliptic such that $\sigma(k,x) \geq 0$ for all $x$ and co-finitely many $k$.  Then there exist $C_{1}, C_{2}>0$ such that for all $f \in \ell_{m,\Lambda}^{2}\left(\mathbb{Z}\right)$, we have
		$$
		\operatorname{Re}(T_{\sigma} f, f)_{\ell^{2}\left(\mathbb{Z}\right)} \geq C_{0}\|f\|_{\ell_{m,\Lambda}^{2}\left(\mathbb{Z}\right)}^{2}-C_{1}\|f\|_{\ell^{2}\left(\mathbb{Z}\right)}^{2} .
		$$
	\end{theorem}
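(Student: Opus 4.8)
The strategy is to transfer the torus G\r{a}rding inequality (Theorem \ref{Gardings theorem on T}) to the lattice via the unitary equivalence recorded in Theorem \ref{relation between tau and sigma}. Given $\sigma \in M_{\rho,\Lambda}^{2m}\left(\mathbb{Z}\times\mathbb{T}\right)$ elliptic and nonnegative for co-finitely many $k$, I would first form $\tau(x,k) := \overline{\sigma(-k,x)}$. As observed already in the proof of Corollary \ref{gohberg lemma on Z}, one has $\tau \in M_{\rho,\Lambda}^{2m}\left(\mathbb{T}\times\mathbb{Z}\right)$; moreover $\tau$ is elliptic and $\tau(x,k) = \overline{\sigma(-k,x)} \geq 0$ for all $x$ and co-finitely many $k$, since $k \mapsto -k$ is a bijection of $\mathbb{Z}$. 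Hence Theorem \ref{Gardings theorem on T} applies to $T_\tau$: there exist $C_0, C_1 > 0$ with
\begin{align}\label{garding-tau-transfer}
\operatorname{Re}(T_\tau g, g)_{L^2(\mathbb{T})} \geq C_0 \|g\|_{H_\Lambda^{m,2}(\mathbb{T})}^2 - C_1 \|g\|_{L^2(\mathbb{T})}^2, \quad g \in H_\Lambda^{m,2}(\mathbb{T}).
\end{align}

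The next step is to rewrite the left side of \eqref{garding-tau-transfer} in terms of $T_\sigma$. By \eqref{sigma in term of tau}, $T_\sigma = \mathcal{F}_{\mathbb{Z}}^{-1} T_\tau^{*} \mathcal{F}_{\mathbb{Z}}$, equivalently $T_\tau^{*} = \mathcal{F}_{\mathbb{Z}} T_\sigma \mathcal{F}_{\mathbb{Z}}^{-1}$, so that for $f \in \ell^2(\mathbb{Z})$ and $g = \mathcal{F}_{\mathbb{Z}}^{-1} f$,
\[
(T_\sigma f, f)_{\ell^2(\mathbb{Z})} = (\mathcal{F}_{\mathbb{Z}}^{-1} T_\tau^{*} \mathcal{F}_{\mathbb{Z}} f, f)_{\ell^2(\mathbb{Z})} = (T_\tau^{*} g, g)_{L^2(\mathbb{T})} = \overline{(T_\tau g, g)_{L^2(\mathbb{T})}},
\]
using that $\mathcal{F}_{\mathbb{Z}}$ is unitary between $\ell^2(\mathbb{Z})$ and $L^2(\mathbb{T})$. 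Taking real parts, $\operatorname{Re}(T_\sigma f, f)_{\ell^2(\mathbb{Z})} = \operatorname{Re}(T_\tau g, g)_{L^2(\mathbb{T})}$.

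It remains to translate the two norms on the right of \eqref{garding-tau-transfer} back to the lattice side. Since $\|g\|_{L^2(\mathbb{T})} = \|f\|_{\ell^2(\mathbb{Z})}$, the lower-order term is immediate. For the leading term, recall $\|g\|_{H_\Lambda^{m,2}(\mathbb{T})} = \|J_{-m} g\|_{L^2(\mathbb{T})}$ where $J_{-m}$ has symbol $\Lambda(k)^m$; on the Fourier side $J_{-m}$ acts as multiplication by $\Lambda(\cdot)^m$, so $\|J_{-m} g\|_{L^2(\mathbb{T})}^2 = \sum_{k\in\mathbb{Z}} \Lambda(k)^{2m} |f(k)|^2 = \|f\|_{\ell_{m,\Lambda}^2(\mathbb{Z})}^2$. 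Combining these identifications with \eqref{garding-tau-transfer} yields $\operatorname{Re}(T_\sigma f, f)_{\ell^2(\mathbb{Z})} \geq C_0 \|f\|_{\ell_{m,\Lambda}^2(\mathbb{Z})}^2 - C_1 \|f\|_{\ell^2(\mathbb{Z})}^2$ for all $f \in \ell_{m,\Lambda}^2(\mathbb{Z})$, which is the claim (with $C_1, C_2$ there playing the roles of $C_0, C_1$ here — note the statement's constants $C_0, C_1$).

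The only genuinely delicate point is bookkeeping: one must check that the hypotheses of Theorem \ref{Gardings theorem on T} (ellipticity and co-finite nonnegativity in the correct variables) are preserved under $\sigma \mapsto \tau$, and that the Fourier-side computation of $\|J_{-m}g\|_{L^2}$ is consistent with the normalization of $\mathcal{F}_{\mathbb{Z}}$ used in defining $T_\tau^{*}$ in Theorem \ref{relation between tau and sigma}; both are routine given the machinery already in place. No new analytic estimate is required — the content is entirely in the unitary intertwining \eqref{sigma in term of tau}.
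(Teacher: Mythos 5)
Your proposal is correct and follows essentially the same route as the paper: define $\tau(x,k)=\overline{\sigma(-k,x)}$, verify that ellipticity and co-finite nonnegativity transfer, apply Theorem \ref{Gardings theorem on T} to $T_\tau$, and pull the inequality back through the unitary intertwining $T_\sigma=\mathcal{F}_{\mathbb{Z}}^{-1}T_\tau^{*}\mathcal{F}_{\mathbb{Z}}$ together with the norm identities $\|\mathcal{F}_{\mathbb{Z}}f\|_{H_\Lambda^{m,2}(\mathbb{T})}=\|f\|_{\ell_{m,\Lambda}^{2}(\mathbb{Z})}$ and $\|\mathcal{F}_{\mathbb{Z}}f\|_{L^2(\mathbb{T})}=\|f\|_{\ell^2(\mathbb{Z})}$. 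The only blemish is the notational slip $g=\mathcal{F}_{\mathbb{Z}}^{-1}f$ where your own computation (and the paper) requires $g=\mathcal{F}_{\mathbb{Z}}f$; this does not affect the argument.
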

	\begin{pff}
		Let $\tau(x, k)=\overline{\sigma(-k, x)}$. Then by Theorem  \ref{relation between tau and sigma}, we have
		$$
		T_{\sigma}=\mathcal{F}_{\mathbb{Z}}^{-1} T_{\tau}^{*} \mathcal{F}_{\mathbb{Z}},
		$$
		and if $\sigma$ is elliptic on $\mathbb{Z} \times \mathbb{T}$, then $\tau$ is elliptic on $\mathbb{T} \times \mathbb{Z}$. Also, if $\sigma \geq 0$, then $\tau \geq 0$. Then by Theorem \ref{Gardings theorem on T}, for all $g \in H_{\Lambda}^{m,2}\left(\mathbb{T}\right)$, we have
		\begin{eqnarray}\label{garding inequality form on g}
			\operatorname{Re}\left(T_{\tau}^{*} g, g\right)_{L^{2}\left(\mathbb{T}\right)}=\operatorname{Re}\left(T_{\tau} g, g\right)_{L^{2}\left(\mathbb{T}\right)} \geq C_{0}\|g\|_{H_{\Lambda}^{m,2}\left(\mathbb{T}\right)}^{2}-C_{1}\|g\|_{L^{2}\left(\mathbb{T}\right)}^{2}.
		\end{eqnarray}
		Let $f \in \ell_{m,\Lambda}^{2}\left(\mathbb{Z}\right)$ and $g=\mathcal{F}_{\mathbb{Z}} f$. Then $g \in H_{\Lambda}^{m,2}\left(\mathbb{T}\right)$, and
		\begin{eqnarray}\label{norm relation in garding inequality}
			\|g\|_{H_{\Lambda}^{m,2}\left(\mathbb{T}\right)}=\|f\|_{\ell_{m,\Lambda}^{2}\left(\mathbb{Z}\right)} \quad \text { and } \quad \|g\|_{L^{2}\left(\mathbb{T}\right)}=\|f\|_{\ell^{2}\left(\mathbb{Z}\right)} .
		\end{eqnarray}
		Now by Theorem \ref{relation between tau and sigma},
		$$
		T_{\sigma} f=\mathcal{F}_{\mathbb{Z}}^{-1} \circ T_{\tau}^{*} \circ \mathcal{F}_{\mathbb{Z}} f=\mathcal{F}_{\mathbb{Z}}^{-1} \circ T_{\tau}^{*} g,
		$$
		so that $\mathcal{F}_{\mathbb{Z}} T_{\sigma} f=T_{\tau}^{*} g .$ Substituting \eqref{norm relation in garding inequality} into \eqref{garding inequality form on g}, we get
		$$
		\begin{aligned}
			\operatorname{Re}\left(T_{\tau}^{*} g, g\right)_{L^{2}\left(\mathbb{T}\right)} & \geq C_{0}\|f\|_{\ell_{m,\Lambda}^{2}\left(\mathbb{Z}\right)}^{2}-C_{1}\|f\|_{\ell^{2}\left(\mathbb{Z}\right)}^{2}, \\
			\operatorname{Re}\left(\mathcal{F}_{\mathbb{Z}} T_{\sigma} f, \mathcal{F}_{\mathbb{Z}} f\right)_{L^{2}\left(\mathbb{T}\right)} & \geq C_{0}\|f\|_{\ell_{m,\Lambda}^{2}\left(\mathbb{Z}\right)}^{2}-C_{1}\|f\|_{\ell^{2}\left(\mathbb{Z}\right)}^{2}, \\
			\operatorname{Re}\left(\mathcal{F}_{\mathbb{Z}}^{*} \mathcal{F}_{\mathbb{Z}} T_{\sigma} f, f\right)_{\ell^{2}\left(\mathbb{Z}\right)} & \geq C_{0}\|f\|_{\ell_{m,\Lambda}^{2}\left(\mathbb{Z}\right)}^{2}-C_{1}\|f\|_{\ell^{2}\left(\mathbb{Z}\right)}^{2}, \quad \text { since } \mathcal{F}_{\mathbb{Z}}^{*} \mathcal{F}_{\mathbb{Z}}=I d, \\
			\operatorname{Re}(T_{\sigma} f, f)_{\ell^{2}\left(\mathbb{Z}\right)} & \geq C_{0}\|f\|_{\ell_{m,\Lambda}^{2}\left(\mathbb{Z}\right)}^{2}-C_{1}\|f\|_{\ell^{2}\left(\mathbb{Z}\right)}^{2},
		\end{aligned}
		$$
		and this completes the proof.
	\end{pff}
	
	We now proceed to prove the sharp Gårding inequality for $M$-elliptic operators on $\mathbb{T}$ and $\mathbb{Z}$, respectively. For this, first we will state the sharp G\r{a}rding's inequality for $M$-elliptic operators on $\mathbb{T}$ without proof as it follows similar lines to the proof of \cite[Corollary 5.9]{AB&GK&MK}.
	\begin{theorem} \textbf{(Sharp G\r{a}rding's inequality for $M$-elliptic operators on $\mathbb{T}$)}\label{Sharp gardings theorem on T}\\ 
		Let $\sigma \in  M_{\rho, \Lambda}^{m}\left(\mathbb{T}\times \mathbb{Z}\right)$ be such that $\sigma(x, k) \geq 0$, for all $(x,k) \in \mathbb{T} \times \mathbb{Z}$. Then there exists a positive constant $C$ such that for all $f \in H_{\Lambda}^{\frac{m-1}{2},2}\left(\mathbb{T}\right)$, we have
		$$
		\operatorname{Re}(T_{\sigma} f, f)_{L^{2}\left(\mathbb{T}\right)} \geq -C\|f\|_{H_{\Lambda}^{\frac{m-1}{2},2}\left(\mathbb{T}\right)}^{2}.
		$$
	\end{theorem}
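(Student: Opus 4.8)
The plan is to mirror the derivation of the toroidal sharp G\r{a}rding inequality in \cite{MR&VT book} and \cite[Corollary 5.9]{AB&GK&MK}, reducing the statement on $\mathbb{T}$ to a sharp G\r{a}rding inequality for the weighted class over the \emph{continuous} frequency space. First I would pass from $\mathbb{Z}$ to $\mathbb{R}$: after extending the weight $\Lambda$ to a weight on $\mathbb{R}$ with the same constants $\mu_{0},\mu_{1},\mu$, every $\sigma\in M_{\rho,\Lambda}^{m}(\mathbb{T}\times\mathbb{Z})$ can be extended to a symbol $\widetilde{\sigma}$ in the Euclidean weighted class $M_{\rho,\Lambda}^{m}(\mathbb{T}\times\mathbb{R})$ of Garello--Morando \cite{GM,mor05}, in such a way that $T_{\sigma}u=\operatorname{Op}(\widetilde{\sigma})u$ for every $u\in C^{\infty}(\mathbb{T})$, regarded as a periodic distribution whose Fourier support lies in $\mathbb{Z}$. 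This is the weighted analogue of the periodic--Euclidean correspondence of Ruzhansky--Turunen; the extension must be chosen so that $\widetilde{\sigma}$ is real and nonnegative modulo a symbol of order strictly smaller than $m$, and this is precisely where the defining property $k\,\Delta_{k}\sigma\in S_{\rho,\Lambda}^{m}$ of the $M$-class (Definition \ref{M class defi}) is used.

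With a genuinely nonnegative symbol in hand (adding to $\widetilde{\sigma}$ a nonnegative correction of lower order if necessary), the second step is to invoke the sharp G\r{a}rding inequality for $M_{\rho,\Lambda}^{m}$ on $\mathbb{R}$, which is itself proved by a Friedrichs symmetrization adapted to the $M$-class (in the manner of \cite{Horman} and \cite{Ta}): for $u\in\mathcal{S}(\mathbb{R})$,
$$
\operatorname{Re}\big(\operatorname{Op}(\widetilde{\sigma})u,u\big)_{L^{2}(\mathbb{R})}\ge -C\,\|u\|_{H_{\Lambda}^{(m-1)/2,2}(\mathbb{R})}^{2}.
$$
Restricting this to periodic $u$ and using Plancherel's theorem to identify the Euclidean weighted Sobolev norm of a periodic function with $\|\cdot\|_{H_{\Lambda}^{(m-1)/2,2}(\mathbb{T})}$, one gets $\operatorname{Re}(T_{\sigma}f,f)_{L^{2}(\mathbb{T})}\ge -C\|f\|_{H_{\Lambda}^{(m-1)/2,2}(\mathbb{T})}^{2}$; the lower-order correction terms introduced above contribute, by the weighted Sobolev boundedness of Theorem \ref{bdd thm}, only further terms bounded by $C\|f\|_{H_{\Lambda}^{(m-1)/2,2}(\mathbb{T})}^{2}$, hence are absorbed, which gives the claim.

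A self-contained alternative, avoiding the passage to $\mathbb{R}$, is a direct Friedrichs symmetrization on $\mathbb{T}$ in the spirit of \cite{Michael&Turunen,sharp garding}: from $\sigma(x,k)\ge0$ one forms the self-adjoint operator $A=\sum_{\ell\in\mathbb{Z}}\int_{\mathbb{T}}\sigma(y,\ell)\,\Phi_{y,\ell}^{*}\Phi_{y,\ell}\,dy$, where $\Phi_{y,\ell}$ is a wave-packet operator localized near $y$ in space and near the frequency $\ell$ at a suitable scale $w_{\ell}$. Then $A=A^{*}\ge0$ by construction, so $\operatorname{Re}(Af,f)_{L^{2}(\mathbb{T})}\ge0$, and one shows that $T_{\sigma}-A\in\operatorname{Op}(M_{\rho,\Lambda}^{m-1}(\mathbb{T}\times\mathbb{Z}))$ by expanding the symbol of $A$ by the discrete Taylor formula in the frequency variable: the zeroth-order term is $\sigma$, the first-order term cancels by the evenness of the wave packet, and the remainder is controlled by the second difference $\Delta_{k}^{2}\sigma$ together with the $M$-class estimates. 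Since $H_{\Lambda}^{-(m-1)/2,2}(\mathbb{T})$ is the dual of $H_{\Lambda}^{(m-1)/2,2}(\mathbb{T})$ for the $L^{2}(\mathbb{T})$ pairing, Theorem \ref{bdd thm} applied to $T_{\sigma}-A$ then yields $\operatorname{Re}(T_{\sigma}f,f)_{L^{2}(\mathbb{T})}\ge\operatorname{Re}(Af,f)_{L^{2}(\mathbb{T})}-\|(T_{\sigma}-A)f\|_{H_{\Lambda}^{-(m-1)/2,2}(\mathbb{T})}\|f\|_{H_{\Lambda}^{(m-1)/2,2}(\mathbb{T})}\ge -C\|f\|_{H_{\Lambda}^{(m-1)/2,2}(\mathbb{T})}^{2}$.

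In both approaches the main obstacle is the same symbol-level estimate: showing that the remainder — the extension error $\widetilde{\sigma}-\sigma$ in the first route, or the Friedrichs defect $T_{\sigma}-A$ in the second — actually lies in $M_{\rho,\Lambda}^{m-1}(\mathbb{T}\times\mathbb{Z})$, i.e. that one recovers the \emph{full} gain of one order rather than the gain $\rho$ a generic symbol of $S_{\rho,\Lambda}^{m}$ would give. This requires matching the frequency scales $w_{\ell}$ (respectively, the interpolation defining $\widetilde{\sigma}$) carefully against the weight parameters and making essential use of the cancellation encoded in $k\,\Delta_{k}\sigma\in S_{\rho,\Lambda}^{m}$ — which is exactly the mechanism responsible for the analogous improvement in \cite[Corollary 5.9]{AB&GK&MK}.
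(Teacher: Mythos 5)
First, a point of reference: the paper does not actually prove this theorem --- it is stated explicitly ``without proof'', with the assertion that it follows the lines of \cite[Corollary 5.9]{AB&GK&MK}, which in turn rests on the unweighted toroidal sharp G\r{a}rding inequality of \cite{Michael&Turunen}. Your first route (extend the toroidal symbol to a Euclidean one on $\mathbb{T}\times\mathbb{R}$ and invoke a sharp G\r{a}rding inequality there) is the standard mechanism behind that cited chain, so in spirit you are reconstructing the intended argument rather than diverging from it, and your identification of the nonnegativity of the extension and the order of the remainder as the two delicate points is accurate.

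However, both of your routes leave the decisive step as an assertion, and it is precisely the step that is not supplied by anything you cite. In Route 1 you ``invoke the sharp G\r{a}rding inequality for $M^{m}_{\rho,\Lambda}$ on $\mathbb{R}$'' with remainder controlled in $H^{(m-1)/2,2}_{\Lambda}$; no such result appears in \cite{Horman}, \cite{Ta}, or \cite{GM,mor05} --- for weighted classes the known symmetrization arguments yield a gain tied to $\rho$, not a full order. In Route 2 the same issue reappears as the unproved claim $T_{\sigma}-A\in\operatorname{Op}(M^{m-1}_{\rho,\Lambda}(\mathbb{T}\times\mathbb{Z}))$. Concretely, the Friedrichs defect is governed by $\partial_{x}^{2}\sigma=O(\Lambda(k)^{m})$ and by the frequency differences of $\sigma$; the $M$-class condition $k\,\Delta_{k}\sigma\in S^{m}_{\rho,\Lambda}$ that you invoke gives $\Delta_{k}\sigma=O(|k|^{-1}\Lambda(k)^{m})=O(\Lambda(k)^{m-1/\mu_{1}})$, i.e.\ a gain of $1/\mu_{1}$ orders of $\Lambda$ per difference, while the bare $S^{m}_{\rho,\Lambda}$ estimates give a gain of $\rho$. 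Optimizing the wave-packet scale $w_{\ell}$ against these two inputs produces a remainder of order $\Lambda^{m-\rho}$ (or, using the extra cancellation, roughly $\Lambda^{m-\rho/2-1/(2\mu_{1})}$), and neither reaches $\Lambda^{m-1}$ unless $\mu_{1}\le 1$. So the ``full gain of one order'' that you yourself flag as the main obstacle is not delivered by the mechanism you describe; what your argument actually yields is the inequality with $\|f\|^{2}_{H^{(m-\rho)/2,2}_{\Lambda}(\mathbb{T})}$ on the right-hand side. Closing the gap from $(m-\rho)/2$ to $(m-1)/2$ requires a genuinely new symbol-level estimate that is present neither in your sketch nor in the paper.
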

	In the following result, we prove that Theorem \ref{Sharp gardings theorem on T} implies the corresponding sharp Gårding's inequality for $M$-elliptic operators on $\mathbb{Z}$.
	\begin{theorem} \textbf{(Sharp G\r{a}rding's inequality for $M$-elliptic operators on $\mathbb{Z}$)}\label{Sharp gardings theorem on Z}\\ 
		Let $\sigma \in  M_{\rho, \Lambda}^{m}\left(\mathbb{Z}\times \mathbb{T}\right)$ be such that $\sigma(k, x) \geq 0$ for all $(k,x) \in \mathbb{Z} \times \mathbb{T}$.  Then there exists a positive constant $C$ such that for all $f \in \ell_{\frac{m-1}{2},\Lambda}^{2}\left(\mathbb{Z}\right)$, we have
		$$
		\operatorname{Re}(T_{\sigma} f, f)_{\ell^{2}\left(\mathbb{Z}\right)} \geq -C\|f\|_{\ell_{\frac{m-1}{2},\Lambda}^{2}\left(\mathbb{Z}\right)} .
		$$
	\end{theorem}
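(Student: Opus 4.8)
The plan is to deduce this from the periodic sharp Gårding inequality (Theorem \ref{Sharp gardings theorem on T}) by exactly the transference device already used in the proof of Theorem \ref{Gardings theorem on Z}. First I would set
$$
\tau(x,k) := \overline{\sigma(-k,x)}, \qquad x \in \mathbb{T},\ k \in \mathbb{Z}.
$$
Since $\sigma \in M_{\rho,\Lambda}^{m}(\mathbb{Z}\times\mathbb{T})$, the correspondence recorded in Theorem \ref{relation between tau and sigma} (and in \cite{SSM&VK}) gives $\tau \in M_{\rho,\Lambda}^{m}(\mathbb{T}\times\mathbb{Z})$. Moreover, the hypothesis $\sigma(k,x) \geq 0$ for all $(k,x)$ forces $\sigma$ to be real-valued, so $\tau(x,k) = \sigma(-k,x) \geq 0$ for all $(x,k) \in \mathbb{T}\times\mathbb{Z}$. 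Hence $\tau$ satisfies the hypotheses of Theorem \ref{Sharp gardings theorem on T}, and there is a constant $C>0$ with
$$
\operatorname{Re}(T_{\tau} g, g)_{L^{2}(\mathbb{T})} \geq -C\|g\|_{H_{\Lambda}^{\frac{m-1}{2},2}(\mathbb{T})}^{2}, \qquad g \in H_{\Lambda}^{\frac{m-1}{2},2}(\mathbb{T}).
$$
Since $(T_{\tau}^{*} g, g)_{L^{2}(\mathbb{T})} = \overline{(T_{\tau} g, g)_{L^{2}(\mathbb{T})}}$, the same estimate holds with $T_{\tau}$ replaced by $T_{\tau}^{*}$.

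Next, given $f \in \ell_{\frac{m-1}{2},\Lambda}^{2}(\mathbb{Z})$, put $g = \mathcal{F}_{\mathbb{Z}} f$. By the definition of the weighted spaces, $g \in H_{\Lambda}^{\frac{m-1}{2},2}(\mathbb{T})$ with $\|g\|_{H_{\Lambda}^{\frac{m-1}{2},2}(\mathbb{T})} = \|f\|_{\ell_{\frac{m-1}{2},\Lambda}^{2}(\mathbb{Z})}$, and $\mathcal{F}_{\mathbb{Z}}$ is unitary from $\ell^{2}(\mathbb{Z})$ onto $L^{2}(\mathbb{T})$. By \eqref{sigma in term of tau} we have $\mathcal{F}_{\mathbb{Z}} T_{\sigma} f = T_{\tau}^{*}\mathcal{F}_{\mathbb{Z}} f = T_{\tau}^{*} g$, so using $\mathcal{F}_{\mathbb{Z}}^{*}\mathcal{F}_{\mathbb{Z}} = \mathrm{Id}$ and the inequality for $T_{\tau}^{*}$,
$$
\operatorname{Re}(T_{\sigma} f, f)_{\ell^{2}(\mathbb{Z})} = \operatorname{Re}(\mathcal{F}_{\mathbb{Z}} T_{\sigma} f, \mathcal{F}_{\mathbb{Z}} f)_{L^{2}(\mathbb{T})} = \operatorname{Re}(T_{\tau}^{*} g, g)_{L^{2}(\mathbb{T})} \geq -C\|g\|_{H_{\Lambda}^{\frac{m-1}{2},2}(\mathbb{T})}^{2} = -C\|f\|_{\ell_{\frac{m-1}{2},\Lambda}^{2}(\mathbb{Z})}^{2},
$$
which is the asserted estimate.

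As for obstacles: once the periodic case is in hand the argument is essentially formal, so there is no genuine analytic difficulty here. The only points requiring care are (i) verifying that the map $\sigma \mapsto \tau$ preserves the class $M_{\rho,\Lambda}^{m}$ for general $m$ and preserves nonnegativity — the latter being immediate from $\overline{\sigma} = \sigma$ — and (ii) keeping track of adjoints and complex conjugates so that the real parts line up correctly through the unitary $\mathcal{F}_{\mathbb{Z}}$. The substantive content lives entirely in Theorem \ref{Sharp gardings theorem on T}, and thence in \cite{AB&GK&MK, Michael&Turunen, sharp garding}.
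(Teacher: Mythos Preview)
Your proposal is correct and follows essentially the same route as the paper: define $\tau(x,k)=\overline{\sigma(-k,x)}$, apply the periodic sharp G\r{a}rding inequality (Theorem~\ref{Sharp gardings theorem on T}) to $\tau$, and transfer back to $\ell^2(\mathbb{Z})$ via the unitary $\mathcal{F}_{\mathbb{Z}}$ using Theorem~\ref{relation between tau and sigma}. If anything, you are slightly more explicit than the paper in justifying $\tau\geq 0$ (via $\overline{\sigma}=\sigma$) and in handling the passage $\operatorname{Re}(T_\tau g,g)=\operatorname{Re}(T_\tau^* g,g)$.
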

	\begin{pff}
		Let $\tau(x, k)=\overline{\sigma(-k, x)}$. Then by Theorem  \ref{relation between tau and sigma}, we have
		$$
		T_{\sigma}=\mathcal{F}_{\mathbb{Z}}^{-1} T_{\tau}^{*} \mathcal{F}_{\mathbb{Z}}.
		$$
		Using the same argument and notation as in the proof of Theorem \ref{Gardings theorem on Z}, and by Theorem \ref{Sharp gardings theorem on T}, we get
		$$
		\begin{aligned}
			\operatorname{Re}(T_{\sigma} f, f)_{\ell^{2}\left(\mathbb{Z}\right)} &=\operatorname{Re}\left(\mathcal{F}_{\mathbb{Z}}^{-1}T_{\tau}^{*} g, \mathcal{F}_{\mathbb{Z}}^{-1} g\right)_{\ell^{2}\left(\mathbb{Z}\right)} \\
			&=\operatorname{Re}\left(T_{\tau}^{*} g, g\right)_{L^{2}\left(\mathbb{T}\right)} \\
			&=\operatorname{Re}\left(T_{\tau} g, g\right)_{L^{2}\left(\mathbb{T}\right)} \\
			& \geq-C\|g\|_{H_{\Lambda}^{\frac{m-1}{2},2}\left(\mathbb{T}\right)}^{2} \\
			&=-C\|f\|_{\ell_{\frac{m-1}{2},\Lambda}^{2}\left(\mathbb{Z}\right)},
		\end{aligned}
		$$
		and this completes the proof of the theorem.
	\end{pff}

	\section{Applications}\label{sec6}
	In this section, we present an application of Gårding's inequality for the   class $M_{\rho, \Lambda}^{m}\left(\mathbb{T} \times \mathbb{Z}\right).$ Let  $T_{\sigma, 0} $ and $T_{\sigma, 1}$  are the minimal and maximal pseudo differential operator of $T_\sigma$ on $L^2(\mathbb{T})$ defined as in   \cite{kal}. First, we recall the following definitions about strongly elliptic symbols and strong solutions.
	\begin{defi}
		Let $\sigma \in M_{\rho,\Lambda}^m\left(\mathbb{T} \times \mathbb{Z}\right),$ $m \in \mathbb{R}$. Then $\sigma$ is said to be strongly $M$-elliptic if there exist positive constants $C$ and $R$ for which
		$$\operatorname{Re}(\sigma(x,k)) \geq C\,\Lambda(k)^{m},\quad |k| \geq R.$$
	\end{defi}
	\begin{defi}
		Let $\sigma \in M_{\rho,\Lambda}^{2m}\left(\mathbb{T} \times \mathbb{Z}\right), m>0$, and let $f \in L^{p}\left(\mathbb{T}\right), 1<p<\infty$. A function $u \in L^{p}\left(\mathbb{T}\right)$ is   a strong solution of the equation $T_{\sigma} u=f$ if $u \in \mathcal{D}\left(T_{\sigma, 0}\right)$ and $T_{\sigma, 0} u=f$.
	\end{defi}
	Proceeding similarly as in Theorem 18.2 of \cite{MWbook}, we have the following result related to 
	the strong solution of the equation $T_{\sigma} u=f$ on $\mathbb{T}$.
	\begin{lemma}\label{Lemma}
		Let $\sigma \in M_{\rho,\Lambda}^{2m}\left(\mathbb{T} \times \mathbb{Z}\right), m>0$, be an $M$-elliptic symbol  such that 
		\begin{align}\label{garding app}
			\operatorname{Re}\left(T_{\sigma} \varphi, \varphi\right) \geq C \|\varphi\|_{m, 2, \Lambda}^{2}, \quad \varphi \in H_{\Lambda}^{m,2},
		\end{align}
		for some    positive  constant $C$.  
		Then for every function $f$ in $L^{2}\left(\mathbb{T}\right)$, the pseudo-differential equation $T_{\sigma} u=f$   has a unique strong solution $u$ in $L^{2}\left(\mathbb{T}\right)$.
	\end{lemma}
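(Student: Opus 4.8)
The strategy is the classical Lax–Milgram / Riesz-representation argument adapted to the weighted Sobolev scale $H_{\Lambda}^{m,2}(\mathbb{T})$, exactly as in Theorem 18.2 of \cite{MWbook}, with G\r{a}rding's inequality \eqref{garding app} supplying coercivity. First I would set up the sesquilinear form
$$
B(\varphi,\psi):=(T_{\sigma}\varphi,\psi)_{L^{2}(\mathbb{T})},\qquad \varphi,\psi\in H_{\Lambda}^{m,2}(\mathbb{T}).
$$
By Theorem \ref{bdd thm}, $T_{\sigma}:H_{\Lambda}^{m,2}(\mathbb{T})\to H_{\Lambda}^{-m,2}(\mathbb{T})$ is bounded, and since the $H_{\Lambda}^{m,2}$–$H_{\Lambda}^{-m,2}$ duality pairing extends the $L^{2}$ inner product, $B$ is a bounded sesquilinear form on $H_{\Lambda}^{m,2}(\mathbb{T})\times H_{\Lambda}^{m,2}(\mathbb{T})$. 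Hypothesis \eqref{garding app} gives $\operatorname{Re}B(\varphi,\varphi)\geq C\|\varphi\|_{m,2,\Lambda}^{2}$, i.e. $B$ is coercive on $H_{\Lambda}^{m,2}(\mathbb{T})$.

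\medskip

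\textbf{Existence and uniqueness of a weak solution.} Given $f\in L^{2}(\mathbb{T})$, the functional $\varphi\mapsto(f,\varphi)_{L^{2}(\mathbb{T})}$ is bounded on $H_{\Lambda}^{m,2}(\mathbb{T})$ because $H_{\Lambda}^{m,2}(\mathbb{T})\hookrightarrow L^{2}(\mathbb{T})$ continuously (as $m>0$; cf. Theorem \ref{cpt thm}). By the Lax–Milgram theorem applied to the bounded coercive form $B$, there is a unique $u\in H_{\Lambda}^{m,2}(\mathbb{T})$ with
$$
B(u,\varphi)=(f,\varphi)_{L^{2}(\mathbb{T})}\qquad\text{for all }\varphi\in H_{\Lambda}^{m,2}(\mathbb{T}).
$$
Taking $\varphi\in C^{\infty}(\mathbb{T})$ (dense in $H_{\Lambda}^{m,2}(\mathbb{T})$) shows $T_{\sigma}u=f$ holds in the sense of distributions on $\mathbb{T}$, so $u$ is a weak solution; coercivity also yields the a priori bound $\|u\|_{m,2,\Lambda}\leq C^{-1}\|f\|_{L^{2}(\mathbb{T})}$, which gives uniqueness of the weak solution as well.

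\medskip

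\textbf{Promoting the weak solution to a strong solution.} This is the step I expect to be the main obstacle, and it is where $M$-ellipticity (not just the G\r{a}rding estimate) enters. Since $\sigma\in M_{\rho,\Lambda}^{2m}(\mathbb{T}\times\mathbb{Z})$ is $M$-elliptic, Theorem \ref{parametrix thm} produces a parametrix $T_{\tau}$ with $\tau\in M_{\rho,\Lambda}^{-2m}(\mathbb{T}\times\mathbb{Z})$ and $T_{\tau}T_{\sigma}=I+R$, where $R$ has symbol in $\bigcap_{m}M_{\rho,\Lambda}^{m}(\mathbb{T}\times\mathbb{Z})=S_{\rho,\Lambda}^{-\infty}(\mathbb{T}\times\mathbb{Z})$, hence is smoothing. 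Applying $T_{\tau}$ to $T_{\sigma}u=f$ gives $u=T_{\tau}f-Ru$; since $f\in L^{2}(\mathbb{T})$ and $T_{\tau}:L^{2}(\mathbb{T})=H_{\Lambda}^{0,2}(\mathbb{T})\to H_{\Lambda}^{2m,2}(\mathbb{T})$ is bounded (Theorem \ref{bdd thm}), while $Ru\in H_{\Lambda}^{t,2}(\mathbb{T})$ for every $t$ (again Theorem \ref{bdd thm}), a bootstrap shows $u\in H_{\Lambda}^{2m,2}(\mathbb{T})$. This regularity places $u$ in the domain $\mathcal{D}(T_{\sigma,0})$ of the minimal operator — indeed $C^{\infty}(\mathbb{T})$ is a core and $u$ is approximable in the graph norm via mollification, using that $T_{\sigma}u=f\in L^{2}(\mathbb{T})$ — so that $T_{\sigma,0}u=f$. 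The delicate point is matching the abstract Lax–Milgram solution with the domain of the \emph{minimal} extension $T_{\sigma,0}$ rather than the maximal one; here one invokes the $M$-elliptic regularity above together with the identification of $\mathcal{D}(T_{\sigma,0})$ from \cite{kal} (closure of $C^{\infty}(\mathbb{T})$ in the graph norm), exactly as in \cite[Theorem 18.2]{MWbook}. Uniqueness of the strong solution is immediate from uniqueness of the weak solution. This completes the argument.
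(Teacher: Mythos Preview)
Your proposal is correct and follows precisely the route the paper itself indicates: the paper does not give an independent proof of this lemma but simply refers to Theorem 18.2 of \cite{MWbook}, and your Lax--Milgram argument on $H_{\Lambda}^{m,2}(\mathbb{T})$ followed by elliptic regularity via the parametrix of Theorem \ref{parametrix thm} is exactly that proof transported to the weighted toroidal setting. The only cosmetic remark is that on $\mathbb{T}$ one approximates $u\in H_{\Lambda}^{2m,2}(\mathbb{T})$ by truncating the Fourier series rather than by ``mollification,'' but the graph-norm convergence (using $T_{\sigma}:H_{\Lambda}^{2m,2}\to L^{2}$ bounded) goes through unchanged and places $u$ in $\mathcal{D}(T_{\sigma,0})$ as you claim.
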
 
	In the next result, we provide sufficient conditions for the existence and uniqueness of strong solutions in $L^{2}(\mathbb{T})$ for the pseudo-differential operator $T_{\sigma}$ with strongly elliptic symbol.
	\begin{theorem} Let $\sigma \in  M_{\rho,\Lambda}^{2m}\left(\mathbb{T} \times \mathbb{Z}\right), m>0$, be a strongly elliptic symbol. Then  for all $f$ in $L^{2}\left(\mathbb{T}\right)$  there exists a real number $\lambda_{0}$ such that  for   all $\lambda \geq \lambda_{0}$, the pseudo-differential equation $\left(T_{\sigma}+\lambda I\right) u=f$ on $\mathbb{T}$ has a unique strong solution $u$ in $L^{2}\left(\mathbb{T}\right)$, where $I$ is the identity operator on $L^{2}\left(\mathbb{T}\right)$.
	\end{theorem}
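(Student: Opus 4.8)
The strategy is to deduce the result from Lemma \ref{Lemma} applied to the shifted symbol $\sigma_{\lambda}(x,k):=\sigma(x,k)+\lambda$. First note that $\sigma_{\lambda}\in M_{\rho,\Lambda}^{2m}\left(\mathbb{T}\times\mathbb{Z}\right)$, since a constant belongs to $M_{\rho,\Lambda}^{0}\left(\mathbb{T}\times\mathbb{Z}\right)\subset M_{\rho,\Lambda}^{2m}\left(\mathbb{T}\times\mathbb{Z}\right)$ (because $2m>0$ and $\Lambda$ is bounded below) and the $M$-symbol classes are closed under addition; and for $|k|\ge R$ and $\lambda\ge0$, strong ellipticity of $\sigma$ gives $|\sigma_{\lambda}(x,k)|\ge\operatorname{Re}\sigma(x,k)+\lambda\ge C\Lambda(k)^{2m}$, so $\sigma_{\lambda}$ is $M$-elliptic of order $2m$. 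Since $T_{\sigma_{\lambda}}=T_{\sigma}+\lambda I$ and the minimal operator attached to $\sigma_{\lambda}$ is $T_{\sigma,0}+\lambda I$ (adding the bounded operator $\lambda I$ changes neither domain nor closure), a strong solution of $T_{\sigma_{\lambda}}u=f$ is exactly a strong solution of $\left(T_{\sigma}+\lambda I\right)u=f$. Hence it suffices to produce $\lambda_{0}\ge0$ such that for all $\lambda\ge\lambda_{0}$ the symbol $\sigma_{\lambda}$ satisfies the coercivity hypothesis \eqref{garding app} of Lemma \ref{Lemma}.

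The heart of the matter is the G\r{a}rding inequality with remainder
$$
\operatorname{Re}\left(T_{\sigma}\varphi,\varphi\right)\ge\tfrac{C_{0}}{2}\|\varphi\|_{m,2,\Lambda}^{2}-C_{2}\|\varphi\|_{L^{2}(\mathbb{T})}^{2},\qquad\varphi\in H_{\Lambda}^{m,2},
$$
for our strongly elliptic $\sigma$. The subtlety is that strong ellipticity controls $\operatorname{Re}\sigma$ rather than $\sigma$ itself, so Theorem \ref{Gardings theorem on T} cannot be applied to $\sigma$ directly; instead I would symmetrize. Using $\operatorname{Re}\left(T_{\sigma}\varphi,\varphi\right)=\tfrac12\left(\left(T_{\sigma}+T_{\sigma}^{*}\right)\varphi,\varphi\right)$ together with the adjoint formula of Theorem \ref{adjoint} (applied to a symbol of order $2m$, with $N=1$), we have $T_{\sigma}+T_{\sigma}^{*}=T_{\sigma+\sigma^{*}}$ where $\sigma+\sigma^{*}=2\operatorname{Re}\sigma+\widetilde{r}$ with $\widetilde{r}=\sigma^{*}-\overline{\sigma}\in M_{\rho,\Lambda}^{2m-\rho}\left(\mathbb{T}\times\mathbb{Z}\right)$; since the left-hand side is real, this gives $\operatorname{Re}\left(T_{\sigma}\varphi,\varphi\right)=\operatorname{Re}\left(T_{\operatorname{Re}\sigma}\varphi,\varphi\right)+\tfrac12\operatorname{Re}\left(T_{\widetilde{r}}\varphi,\varphi\right)$. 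Now $\operatorname{Re}\sigma=\tfrac12\left(\sigma+\overline{\sigma}\right)\in M_{\rho,\Lambda}^{2m}\left(\mathbb{T}\times\mathbb{Z}\right)$ is real-valued, $M$-elliptic, and $\ge0$ for $|k|\ge R$, hence for co-finitely many $k$; so Theorem \ref{Gardings theorem on T} yields $\operatorname{Re}\left(T_{\operatorname{Re}\sigma}\varphi,\varphi\right)\ge C_{0}\|\varphi\|_{m,2,\Lambda}^{2}-C_{1}\|\varphi\|_{L^{2}(\mathbb{T})}^{2}$.

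To finish, the remainder is absorbed: by Theorem \ref{bdd thm}, $T_{\widetilde{r}}\colon H_{\Lambda}^{m,2}\to H_{\Lambda}^{-m+\rho,2}$, so by $L^{2}$-duality $\left|\left(T_{\widetilde{r}}\varphi,\varphi\right)\right|\le C\|\varphi\|_{m-\rho,2,\Lambda}\|\varphi\|_{m,2,\Lambda}$, and the elementary bound $\|\varphi\|_{m-\rho,2,\Lambda}\le\eta\|\varphi\|_{m,2,\Lambda}+C_{\eta}\|\varphi\|_{L^{2}(\mathbb{T})}$ (immediate from Plancherel and $\Lambda(k)^{2(m-\rho)}\le\eta^{2}\Lambda(k)^{2m}+C_{\eta}^{2}$) together with Young's inequality gives, for $\eta$ small, $\left|\left(T_{\widetilde{r}}\varphi,\varphi\right)\right|\le\tfrac{C_{0}}{2}\|\varphi\|_{m,2,\Lambda}^{2}+C'\|\varphi\|_{L^{2}(\mathbb{T})}^{2}$. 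This establishes the displayed inequality with $C_{2}=C_{1}+C'$, and then $\operatorname{Re}\left(T_{\sigma_{\lambda}}\varphi,\varphi\right)=\operatorname{Re}\left(T_{\sigma}\varphi,\varphi\right)+\lambda\|\varphi\|_{L^{2}(\mathbb{T})}^{2}\ge\tfrac{C_{0}}{2}\|\varphi\|_{m,2,\Lambda}^{2}+(\lambda-C_{2})\|\varphi\|_{L^{2}(\mathbb{T})}^{2}$. Taking $\lambda_{0}=\max\{0,C_{2}\}$ makes \eqref{garding app} hold for $\sigma_{\lambda}$ whenever $\lambda\ge\lambda_{0}$, and Lemma \ref{Lemma} then delivers a unique strong solution in $L^{2}(\mathbb{T})$ of $\left(T_{\sigma}+\lambda I\right)u=f$. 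I expect the symmetrization-and-absorption step to be the only genuine obstacle — one must verify that the adjoint calculus produces exactly an $M_{\rho,\Lambda}^{2m-\rho}$ remainder and that it is swallowed by the coercive term — the rest being bookkeeping that moreover carries over verbatim to $\mathbb{T}^{n}$ and, via Theorem \ref{relation between tau and sigma}, to $\mathbb{Z}$.
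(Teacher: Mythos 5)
Your proof is correct, and its skeleton coincides with the paper's: establish a G\r{a}rding-type coercivity estimate for $T_{\sigma}$ with an $L^{2}$ remainder, absorb that remainder by adding $\lambda I$, and invoke Lemma \ref{Lemma} for the shifted operator. Where you genuinely diverge is in how the coercivity estimate is obtained. The paper simply cites inequality \eqref{garding} for the strongly elliptic $\sigma$, even though Theorem \ref{Gardings theorem on T} is stated for elliptic symbols satisfying $\sigma(x,k)\geq 0$ --- a condition a complex-valued strongly elliptic symbol need not satisfy --- so the paper is implicitly assuming a G\r{a}rding inequality for strongly elliptic symbols that it never proves. You close exactly this gap: symmetrizing via $\operatorname{Re}(T_{\sigma}\varphi,\varphi)=\tfrac12((T_{\sigma}+T_{\sigma}^{*})\varphi,\varphi)$, applying Theorem \ref{Gardings theorem on T} to the real-valued, nonnegative (for co-finitely many $k$), $M$-elliptic symbol $\operatorname{Re}\sigma$, and swallowing the order-$(2m-\rho)$ remainder from the adjoint calculus (Theorem \ref{adjoint} with $N=1$) by Theorem \ref{bdd thm}, duality, and the interpolation bound $\|\varphi\|_{m-\rho,2,\Lambda}\leq\eta\|\varphi\|_{m,2,\Lambda}+C_{\eta}\|\varphi\|_{L^{2}(\mathbb{T})}$. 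You also verify that $\sigma+\lambda$ remains in $M_{\rho,\Lambda}^{2m}$ and is $M$-elliptic, hypotheses of Lemma \ref{Lemma} that the paper leaves unchecked. Both routes reach the same conclusion; yours is longer but is a complete derivation from the results actually stated in the paper, and the symmetrization argument is the standard and correct way to pass from strong ellipticity (a condition on $\operatorname{Re}\sigma$) to the hypotheses of the stated G\r{a}rding inequality.
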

	
	\begin{pff}
		From  Gårding's inequality (\ref{garding}), there exist constants $A>0$ and $\lambda_{0}$ such that  
		$$
		\operatorname{Re}\left(T_{\sigma} \varphi, \varphi\right) \geq A\|\varphi\|_{m, 2, \Lambda}^{2}-\lambda_{0}\|\varphi\|_{0, 2, \Lambda}^{2}, \quad \varphi \in H_{\Lambda}^{m,2}.
		$$
		
		Now  for any $\lambda \geq \lambda_{0}$, we get 
		\begin{align*}
			\operatorname{Re}\left(\left(T_{\sigma}+\lambda I\right) \varphi, \varphi\right) &=\operatorname{Re}\left (T_{\sigma}   \varphi, \varphi\right)+\left(\lambda-\lambda_{0}\right)\|\varphi\|_{2}^{2}  -\lambda_{0}\|\varphi\|_{2}^{2}  \\
			&\geq A\|\varphi\|_{m, 2, \Lambda}^{2}+\left(\lambda-\lambda_{0}\right)\|\varphi\|_{2}^{2} \geq A\|\varphi\|_{m, 2, \Lambda}^{2}.
		\end{align*}
		This shows that  $	  T_{\sigma}$  satisfies the condition (\ref{garding app}).  Thus by  Lemma \ref{Lemma},   the pseudo-differential equation $\left(T_{\sigma}+\lambda I\right) u=f$  has a unique strong solution $u$ in $L^{2}\left(\mathbb{T}\right)$ and this completes the proof.
	\end{pff}


\begin{thebibliography}{99}
		\bibitem{Atkinson} F. V. Atkinson, The normal solubility of linear equations in normed spaces,  \emph{Mat. Sbornik N.S.} 28(70):3-14, 1951.\vspace{6pt}
		
		\bibitem{Beals} 	R. Beals, A general class of pseudo-differential operators, \emph{Duke Math. J.} 42:1-42, 1975.\vspace{6pt}
		
		\bibitem{AB&GK&MK} L. N. A. Botchway, P. G. Kabiti, and M. Ruzhansky, Difference equations and pseudo-differential operators on $\mathbb{Z}^n$,  \emph{J. Funct. Anal.} 278(11):108473, 2017.\vspace{6pt}
		
		\bibitem{Cardonaaa} D. Cardona, Pseudo-differential Operators on $\mathbb{ Z}^n$ with applications to discrete fractional integral operators, \emph{Bull. Iran. Math. Soc.} 45(4), 1227-1241, 2019.\vspace{6pt}
		
		\bibitem{Kumar202020} D. Cardona, C. del Corral  and V. Kumar, Dixmier traces for discrete pseudo-differential operators, \emph{J. Pseudo-Differ. Oper. Appl.}  11, 647-656, 2020.\vspace{6pt}
		
		\bibitem{garding1} D. Cardona, J. Delgado and  M. Ruzhansky,  Analytic functional calculus and Gårding inequality on graded Lie groups with applications to diffusion equations, arXiv:2111.07469, 2021.\vspace{6pt}
		
		\bibitem{sharp garding} D. Cardona, S. Federico, and M. Ruzhansky,	Subelliptic sharp Gårding inequality on compact Lie groups, 	 {arXiv:2110.00838},  2021.\vspace{6pt}
		
		\bibitem{CardonaKumar1} D. Cardona and V. Kumar,  Multilinear analysis for discrete and periodic pseudo-differential operators in $L^p$ spaces,  \emph{ Rev. Integr. temas Mat.}  36(2),   151-164, 2018.\vspace{6pt}
		
		\bibitem{CardonaKumar} D. Cardona and V. Kumar, $L^p$-boundedness and $L^p$-nuclearity of multilinear pseudo-differential operators on $\mathbb{Z}^n$ and the torus $\mathbb{T}^n,$ \emph{J. Fourier Anal. Appl.}  25(6), 2973-3017, 2019.\vspace{6pt}
		
		\bibitem{DC&MR} D. Cardona, V. Kumar,    M. Ruzhansky, and N.  Tokmagambetov, Global Functional calculus, lower/upper bounds and evolution equations on manifolds with boundary, 
		{arXiv:2101.02519}, 2021.\vspace{6pt}
		
		\bibitem{dasgupta} 	 A.  Dasgupta and M.  Ruzhansky,  The Gohberg lemma, compactness, and essential spectrum of operators on compact Lie groups. \emph{J. Anal. Math.} 128:179-190, 2016.\vspace{6pt}
		
		\bibitem{feff} C. Fefferman, $L^p$-bounds for pseudo-differential operators,  \emph{Israel J. Math.} 14:413-417, 1973.\vspace{6pt}
		
		\bibitem{fis1}  V. Fischer and M. Ruzhansky, \emph{Quantization on Nilpotent Lie Groups}, Progress in Mathematics, Vol. 314,   Birkh\"auser, Basel, 2016.\vspace{6pt}
		
		\bibitem{LG} L. G\r{a}rding, Dirichlet’s problem for linear elliptic partial differential equations, \emph{Mathematica Scandinavica} 1:55-72, 1953.\vspace{6pt}
		
		\bibitem{GM} G. Garello and A. Morando, \emph{A class of $L^p$ bounded pseudo-differential operators}: In Progress in analysis, Vol. I, II (Berlin, 2001), pages 689–696,    {World Sci. Publ., River Edge, NJ}, 2003.\vspace{6pt}
		
		\bibitem{mor05}   G. Garello and A. Morando, $L^p$-bounded pseudo-differential opearors and regularity for multi-quasi elliptic equations,  \emph{Integr. Equ. Oper. Theory} 51:501-517, 2005.\vspace{6pt}
		
		\bibitem{Goh} 	I. C. Gohberg, On the theory of multi-dimensional singular integral equations, \emph{Soviet Math. Dokl.} 1:960–963, 1960.\vspace{6pt}
		
		\bibitem{VVG} V. V. Gru\v{s}hin, Pseudo-differential operators in $\mathbb{R}^n$ with bounded symbols, \emph{Funkcional. Anal. i Prilo\v{z}en} 4(3):37-50, 1970.\vspace{6pt}
		
		\bibitem{Horman} 	L. H\"ormander, Pseudo-differential operators and non-elliptic boundary problems, \emph{Ann. of Math.} 83(2):129–209, 1966.\vspace{6pt}		
		
		\bibitem{Hor} L. H\"ormander, \emph{The Analysis of Linear Partial Differential Operators} $III$,   Springer-Verlag, Berlin, 1985. \vspace{6pt}
		
		\bibitem{kal} M. K. Kalleji, Essential spectrum of $M$-hypoelliptic pseudo-differential on the torus, \emph{J. Pseudo-Differ. Oper. Appl.} 6:439-459, 2016.\vspace{6pt}
		
		\bibitem{Niren} J. J. Kohn and L. Nirenberg, An algebra of pseudo-differential operators, \emph{ Comm. Pure Appl. Math.} 18:269-305, 1965.\vspace{6pt}
		
		\bibitem{SSM&VK} V. Kumar and S. S.    Mondal,  Symbolic calculus and m-ellipticity
		of pseudo-differential operators on $\mathbb{Z}^n$, 	arXiv:2111.10224, 2021.\vspace{6pt}
		
		\bibitem{SM&MW} S. Molahajloo and M. W. Wong, Ellipticity, Fredholmness and spectral invariance of pseudo-differential operators on $\mathbb{S}^1$,  \emph{J. Pseudo-Differ. Oper. Appl.} 1:183-205, 2010.\vspace{6pt}
		
		\bibitem{Vel} J. P. Velasquez-Rodriguez,  On some spectral properties of pseudo-differential operators on $\mathbb{ T}$, \emph{J. Fourier Anal. Appl.} 25(5):2703-2732, 2019. \vspace{6pt}
		
		\bibitem{MP} M. Pirhayati, \emph{Spectral theory of pseudo-differential operators on $\mathbb{S}^1$}: In Pseudo-Differential Operators: Analysis, Applications and Computations,   {Birkh\"{a}user}, Basel, 2008.\vspace{6pt}
		
		\bibitem{publo} M. 	 Ruzhansky and J. P.  Velasquez-Rodriguez,  Non-harmonic Gohberg's lemma, Gershgorin theory and heat equation on manifolds with boundary, \emph{Math. Nachr.} 294(9):1783-1820, 2021.\vspace{6pt}
		
		\bibitem{MR&VT book} M. Ruzhansky and V. Turunen, \emph{Pseudo-differential Operators and Symmetries: Background Analysis and Advanced Topics}: In volume 2 of Pseudo-differential operators. Theory and Applications, {Birkh\"{a}user-Verlag, Basel}, 2010.\vspace{6pt}
		
		\bibitem{Michael&Turunen} M. Ruzhansky and V. Turunen, Sharp G\r{a}rding inequality on compact Lie groups.  \emph{J. Funct. Anal.} 260(10):2881-2901, 2011.\vspace{6pt}
		
		\bibitem{MR&VT&JW}   M. Ruzhansky, V. Turunen, and J. Wirth, H\"{o}rmander class of pseudo-differential operators on compact Lie groups and global hypoellipticity,  \emph{J. Fourier Anal. Appl.} 20(3):476-499, 2014.\vspace{6pt}
		
		\bibitem{MR&JW}	M. Ruzhansky and J. Wirth, Global functional calculus for operators on compact Lie groups, \emph{J. Funct. Anal.}                         267:772–798, 2014.\vspace{6pt}
		
		\bibitem{Schechter} M. Schechter, On the essential spectrum of an arbitrary operator  I, \emph{J. Math. Anal. Appl.} 13:205-215, 1966.\vspace{6pt}
		
		\bibitem{Schechterbook} M. Schechter, \emph{Spectra of partial differential operators}, Second edition,  {North-Holland}, 1986.\vspace{6pt}
		
		\bibitem{Ta}   M. E. Taylor, \emph{Pseudo-differential operators},   {Princton university press}, Princton, 1981.\vspace{6pt}
		
		\bibitem{Wolf} F. Wolf, On the essential spectrum of partial differential boundary problems,  \emph{Comm. Pure Appl. Math.} 12:211-228, 1959.\vspace{6pt}
		
		\bibitem{wong06} M. W. Wong,  M-elliptic pseudo-differential operators on $L^{p}\left(\mathbb{R}^{n}\right)$, \emph{Math. Nachr.} 279:319-326,  2006.\vspace{6pt}
		
		\bibitem{MWbook} M. W. Wong,  \emph{An introduction to pseudo-differential operators}, volume 6 of Series on Analysis,
		Applications and Computation,  {World Scientific Publishing Co. Pte. Ltd., Hackensack, NJ}, 2014.
		
		
	\end{thebibliography}
\end{document}